\documentclass[11pt,a4paper]{article}
\usepackage[margin=0.75in]{geometry}
\usepackage{authblk}
\usepackage{parskip}
\usepackage{amsmath, amssymb, amsfonts, caption, enumitem, graphicx, mathtools, nicefrac, xcolor, comment, subcaption, dsfont} 
\usepackage[square,sort,numbers]{natbib}
\usepackage[title]{appendix}
\usepackage[utf8]{inputenc}
\usepackage[T1]{fontenc}
\usepackage{hyperref}
\usepackage[dvipsnames]{xcolor}
\hypersetup{colorlinks, linkcolor={red!50!black}, citecolor={blue!50!black}, urlcolor={ForestGreen}}

\usepackage{amsthm}
\newtheorem{theorem}{Theorem}

\newtheorem{lemma}{Lemma}
\newtheorem{corollary}{Corollary}

\theoremstyle{definition}

\newtheorem{definition}{Definition}
\newtheorem{assumption}{Assumption}
\newtheorem{condition}{Condition}

\usepackage[capitalize]{cleveref}
\crefname{assumption}{Assumption}{Assumptions}
\Crefname{assumption}{Assumption}{Assumptions}
\crefformat{equation}{(#2#1#3)}
\Crefname{condition}{Condition}{Conditions}
\crefformat{condition}{Condition #2#1#3}
\crefformat{assumption}{Assumption #2#1#3}
\crefrangeformat{equation}{(#3#1#4)--(#5#2#6)}
\crefrangeformat{condition}{Conditions #3#1#4--#5#2#6}
\crefrangeformat{lemma}{Lemmas #3#1#4--#5#2#6}
\crefrangeformat{algorithm}{Algorithms #3#1#4--#5#2#6}
\crefrangeformat{section}{Sections #3#1#4--#5#2#6}

\usepackage{anyfontsize}
\usepackage{t1enc}
\newcommand{\midsmall}{\fontsize{8.75pt}{10pt}\selectfont}
\usepackage{algorithmic}
\newcounter{algorithm}
\renewcommand{\thealgorithm}{\arabic{algorithm}}
\newlength{\algcolht}
\newenvironment{algobox}[2]{%
  \refstepcounter{algorithm}%
  \par\noindent\begin{minipage}[t]{\linewidth}\midsmall
  \hrule height 0.5pt\relax
  \vspace{2pt}
  \textbf{Algorithm \thealgorithm}~#1\label{#2}\par
  \vspace{1.5pt}
  \hrule height 0.5pt\relax
  \vspace{3pt}
  \begin{algorithmic}
}{%
  \end{algorithmic}
  \vspace{2pt}
  \hrule height 0.5pt\relax
  \end{minipage}\par
}

\newcommand{\alginit}[1]{\STATE \textbf{Initialize:} #1}
\newcommand{\alginput}[1]{\STATE \textbf{Input:} #1}

\definecolor{gold}{rgb}{0.85,0.65,0}
\colorlet{dgreen}{green!60!black}

\def\beq{\begin{equation}}
\def\eeq{\end{equation}}

\def\fnote#1{\footnote}

\newcommand{\grad}{\ensuremath{\nabla}}

\def\R{{\mathbb{R}}}

\def\cA{{\cal A}}

\def\cD{{\cal D}}

\def\cG{{\cal G}}
\def\cH{{\cal H}}
\def\cI{{\cal I}}

\def\cN{{\cal N}}
\def\cO{{\cal O}}

\def\cS{{\cal S}}

\def\cX{{\cal X}}

\newcommand{\bbP}{\mathbb{P}}

\newcommand{\bbS}{\mathbb{S}}

\newcommand{\bbZ}{\mathbb{Z}}

\DeclareMathOperator*{\argmin}{arg\,min}
\DeclareMathOperator*{\argmax}{arg\,max}

\DeclareMathOperator{\Tr}{Tr}

\DeclareMathOperator{\lin}{lin}

\DeclareMathOperator{\conv}{conv}

\DeclarePairedDelimiter\abs{\lvert}{\rvert}%

\makeatletter
\let\oldabs\abs
\def\abs{\@ifstar{\oldabs}{\oldabs*}}
\makeatother

\title{Auto-Conditioned Frank-Wolfe Algorithms}

\author[1]{Khanh-Hung Giang-Tran}
\author[1]{Soroosh Shafiee}
\author[2]{Nam Ho-Nguyen}

\affil[1]{Operations Research and Information Engineering, Cornell University, USA \protect\\ \texttt{\{tg452,shafiee\}@cornell.edu}}
\affil[2]{Discipline of Business Analytics, The University of Sydney, Sydney, Australia \protect \\ \texttt{nam.ho-nguyen@sydney.edu.au}}

\date{}
\begin{document}

\maketitle

\begin{abstract}
Frank-Wolfe methods are projection-free algorithms for constrained optimization whose practical performance often depends critically on the choice of step size. Classical closed-loop step-size rules typically require prior knowledge of a global smoothness constant, while line-search variants avoid this requirement at the cost of additional function evaluations and implementation overhead. In this paper, we develop a  fully auto-conditioned framework for Frank-Wolfe-type methods. The framework replaces the global Lipschitz constant in closed-loop step sizes with a local Lipschitz estimator computed from first-order information along the iterates. We show that this abstraction captures several important projection-free subroutines, including standard Frank-Wolfe, Matching Pursuit, pairwise Frank-Wolfe, and away-step Frank-Wolfe. For the resulting general class of methods, we establish convergence to stationary points in the nonconvex setting and recover the standard sublinear convergence guarantees in the convex setting, without requiring prior knowledge of a global smoothness constant. We further show that, when specialized to particular Frank-Wolfe variants and combined with additional structural assumptions, the same auto-conditioned framework yields accelerated convergence rates. Numerical experiments demonstrate that the proposed methods provide substantial practical improvements over line-search-based alternatives, highlighting the benefits of adapting to local curvature while retaining the simplicity of closed-loop step-size rules.
\end{abstract}

\section{Introduction}
Let $\cX \subset \R^n$ be a given feasible set, and consider the optimization problem
\begin{equation} \label{eq:opt}
    \min_{x \in \cX} ~ f(x).
\end{equation}
We denote $x^\star$ to be an optimal solution to~\eqref{eq:opt}.
In this paper we are interested in settings where the feasible set $\cX$ is constructed via a dictionary $\cA$, and the objective function satisfies standard smoothness assumptions.
\begin{assumption}[Dictionary representation]\label{ass:X-compact}
    There exists a dictionary $\cA \subset \R^n$ such that $\cX = \conv(\cA)$ or $\cX = \lin(\cA)$ where $\conv(\cdot)$ denotes the convex hull of a set, and $\lin(\cdot)$ denotes its linear span.
    Moreover, the dictionary $\cA$ is compact set with diameter $D_{\cA} < \infty$, that is, 
    $$\| x - y \|_2 \leq D_{\cA}, \qquad \forall x,y \in \cA.$$
\end{assumption}
\begin{assumption}[Lipschitz gradient]\label{ass:smooth-functions}
    $\grad f: \R^n \to \R^n$ is $L$-Lipschitz continuous, that is,
    $$\|\grad f(x) - \grad f(y)\|_2 \leq L \|x-y\|_2, \quad \forall x,y \in \R^n.$$    
\end{assumption}
In settings where \cref{ass:X-compact} holds, a common consideration is to look for approximate solutions to \cref{eq:opt} that are \emph{sparse} in the dictionary $\cA$. For $\conv(\cA)$ and $\lin(\cA)$, under \cref{ass:smooth-functions} it is well-known that Frank-Wolfe and Matching Pursuit-type methods can find sparse solutions iteratively.

A key part of such methods is the selection of the stepsize, i.e., a parameter that governs how far we move away from the current iterate. A common choice is the classic closed-loop stepsize rule (also referred to as the short-step rule) based on the smoothness constant $L$. Namely, at iteration $t$, given $x_t \in \cX$, one sets the stepsize to be 
\begin{align}
    \label{eq:closed-loop}
    \gamma_t \gets \argmin_{\gamma \in [0,\gamma_t^{\max}]} \left\{ f(x_t) - \gamma \grad f(x_t)^\top d_t + \frac{\gamma^2 L \|d_t\|_2^2}{2} \right\} = \min\!\left\{\dfrac{\grad f(x_t)^\top d_t}{L \lVert d_t \rVert_2^2},\, \gamma_t^{\max} \right\},
\end{align}
where $\gamma_t^{\max}$ the maximal admissible stepsize and $d_t$ is the search direction; we will discuss how exactly these are set later. 
For example, under the closed-loop Frank-Wolfe subroutine, one has $\gamma_t^{\max}=1$.
Note that the equality in \cref{eq:closed-loop} follows from the fact that the search direction $d_t$ will be chosen to ensure $\grad f(x_t)^\top d_t \geq 0$ for all $t$.
Under appropriate conditions, a closed-loop stepsize rule can yield accelerated convergence guarantees compared to more conservative open-loop (i.e., fixed schedule) stepsizes. However, implementing it requires knowledge of $L$, and even when an estimate is available, its practical performance depends strongly on the quality of the estimate. That is, an overly large estimate typically leads to excessively small steps and therefore slow optimization progress.

An effective strategy to encourage less conservative closed-loop stepsizes is to use local smoothness parameters instead of the global constant $L$. While these are rarely available a priori, line search allows us to test different stepsizes to look for a sufficient decrease condition that would replicate the knowledge of such parameters. The disadvantage of this, however, is that each iteration may require multiple objective evaluations before an acceptable stepsize is found, which can substantially increase the per-iteration cost. In settings where these evaluations are expensive, the practical gains from taking larger steps may therefore be offset by the additional computational overhead of the line search procedure.

The main goal of this paper is to develop an auto-conditioned (line search-free) mechanism that improves this closed-loop rule by replacing the global parameter $L$ with a local Lipschitz estimator computed from the current trial point, and provide convergence analyses for both Frank-Wolfe- and Matching Pursuit-type methods in a unified framework. To this end, for any $x,y\in\R^n$, we define
\begin{equation}\label{eq:ell-def}
    \ell(x,y) :=
    \begin{cases}
        \dfrac{2\left|f(y)-f(x)-\grad f(x)^\top (y-x)\right|}{\|y-x\|_2^2}, & x \neq y,\\[1.25ex]
        0, & x=y.
    \end{cases}
\end{equation}
This quantity serves as a local estimate of the Lipschitz of $f$ along the segment joining $x$ and $y$, and it will be the basis for our auto-conditioned stepsize strategy.

The key contributions of this paper are summarized below.
\begin{enumerate}[label=$\diamond$,leftmargin=*]
    \item We propose a general abstract framework for auto-conditioned Frank-Wolfe methods based on a closed-loop step-size rule, where it uses the local Lipschitz estimator introduced in~\eqref{eq:ell-def}. We further show that several Frank-Wolfe variants, including the closed-loop Frank-Wolfe, standard Matching Pursuit, pairwise Frank-Wolfe, and away-step Frank-Wolfe subroutines, fit this abstraction.

    \item For this general framework, we establish convergence to stationary points in the nonconvex setting. In the convex setting, we recover the known sublinear convergence guarantees usually derived under prior knowledge of a global Lipschitz constant with closed-loop stepsizes, but now for a fully auto-conditioned method that uses only local Lipschitz estimates. We then show that acceleration is possible when the framework is specialized to particular subroutines. Namely, we establish accelerated convergence rates for the corresponding Frank-Wolfe variants under additional structural assumptions.

    \item Finally, our numerical results show that the proposed auto-conditioned approach is competitive with line-search alternatives and offers improvements in regimes where function evaluations are expensive.
\end{enumerate}

\subsection{Related Works}

\paragraph{Auto-Conditioned (Line Search-Free) Methods.} While adaptive methods also exist for nonsmooth objectives, we focus here on smooth minimization and closely related problems, since these are the most relevant comparators under our assumptions. 
AdaGrad~\citep{duchi2011adaptive} pioneered this line of work and has since inspired a rich family of adaptive methods~\citep{kingma2015adam,reddi2018convergence,tieleman2012lecture,zeiler2012adadelta,wang2026universal}. A central theme is transferring adaptive online learning techniques to offline settings, extending projected gradient-type methods to offline convex optimization~\citep{levy2017online,levy2018online,cutkosky2019anytime,kavis2019unixgrad}, monotone variational inequalities~\citep{bach2019universal}, and nonconvex optimization~\citep{attia2023sgd,alacaoglu2021convergence,kavis2022high,wang2023convergence,ward2020adagrad}. In each case, adaptivity is driven by cumulative gradient- or discrepancy-based statistics accumulated over the run, yielding a globally safe learning-rate schedule that matches the guarantees of parameter-tuned methods without prior knowledge of smoothness or noise levels. A limitation of cumulative-based methods, however, is that their stepsizes decrease monotonically, preventing the algorithm from exploiting favorable local curvature to accelerate progress at particular iterations.

A second line of work moves away from online-to-offline conversion, and instead forms an estimate of the local smoothness constant at each iteration by using two past iterates, e.g., via \cref{eq:ell-def} or a variant of it. These estimates allow the stepsizes to increase, though a clipping procedure is typically applied to prevent wild fluctuations and to facilitate meaningful analysis. The earliest results are due to \citet{malitsky2020adaptive,malitsky2024adaptive}, who treat the convex (proximal) gradient setting without acceleration; the analysis has since been extended to local Lipschitz continuity of the gradient \citep{latafat2025adaptive} and to the Bregman proximal setting without relative smoothness \citep{ou2025linesearch}. Building on this template, accelerated convergence rates have been established \citep{li2025simple,suh2025adaptive,wang2025adaptive}, along with guarantees for nonconvex problems \citep{lan2024projected,ye2025simple}. Closely related ideas have proven useful in a broad range of structured settings, including primal-dual methods and ADMM-type algorithms \citep{vladarean2021first,lan2024auto,jang2026alia}, variational inequalities \citep{malitsky2020golden,alacaoglu2023beyond,shen2026parameter} and Riemannian optimization~\citep{ansari2025adaptive,park2026adaptive}. This second line of work is the inspiration for our paper, where we aim to bring these ideas to Frank-Wolfe-type methods and obtain convex, accelerated, and nonconvex convergence guarantees.

\paragraph{Closed-Loop Frank-Wolfe.} 
The Frank-Wolfe algorithm was originally introduced in \citep{frank1956algorithm} for quadratic programming, and was later generalized far beyond this initial setting to constrained convex problems for which the feasible region is accessible through an efficient linear minimization oracle \citep{jaggi2013revisiting,lacoste2013block,beck2015cyclic, braun2022conditional}. Early convergence analyses of the classical closed-loop step-size rule in \eqref{eq:closed-loop} can be traced back to the work of \citet{dunn1980convergence}. Beyond the basic sublinear rate of $\cO(1/T)$ after $T$ iterations, accelerated rates for Frank-Wolfe methods with closed-loop stepsizes have been established under additional geometric or structural assumptions, going back to \citet{polyak1966constrained} and more recently refined in works such as \citet{garber2015faster}. More recently, \citet{pedregosa2020linearly} proposed line-search and backtracking variants that adapt to unknown smoothness constants while maintaining these improved convergence rates. 
In this paper, we revisit variants of the Frank-Wolfe algorithms through an auto-conditioned scheme based on the local Lipschitz estimator in~\eqref{eq:ell-def}. One closely-related prior work is \citep{yuan2026adaptive}, which develops an adaptive Frank-Wolfe method, with a focus on the stochastic nonconvex setting. Their estimator, however, is AdaGrad-style \citep{levy2017online,levy2018online} and it accumulates gradient information across iterations and is therefore monotonically non-decreasing, leading to conservative stepsizes. 
Another closely related work is \citep{yagishita2025simple}, which relies on the local estimator~\eqref{eq:ell-def}. Nevertheless, \citep[Algorithm~3]{yagishita2025simple} does not include the damping factor $r_t$, making the estimator non-decreasing. Moreover, \citep{yagishita2025simple} analyzes only the nonconvex setting, and only for the closed-loop Frank-Wolfe algorithm. Our algorithm, by contrast, allows non-monotonic stepsizes, and we establish convergence guarantees for a range of Frank-Wolfe variants in both the convex and nonconvex settings, including accelerated rates under additional assumptions.

\paragraph{Standard Matching Pursuit.}
Matching Pursuit, introduced by \citet{mallat1993matching}, can be viewed as the counterpart of Frank-Wolfe for optimization over $\lin(\cA)$. 
This connection was made explicit by \citet{locatello2017unified}, who developed a unified optimization view of generalized Matching Pursuit and Frank-Wolfe and obtained affine-invariant sublinear and linear convergence guarantees. 
More recently, \citet{pedregosa2020linearly} proposed line-search and backtracking variants that preserve these improved rates in the adaptive setting. In this paper, we revisit the standard Matching Pursuit method through a auto-conditioned scheme and show that it recovers the standard and improved guarantees without requiring prior knowledge of a global Lipschitz constant. While our framework could potentially be extended further, we do not consider here other variants such as the conic extension~\citep{locatello2018greedy}, orthogonal Matching Pursuit~\citep{pati1993orthogonal}, or blended variants~\citep{combettes2019blended}.

\paragraph{Pairwise Frank-Wolfe.}
The pairwise Frank-Wolfe method updates the iterate by transferring mass from an active atom to a newly selected Frank-Wolfe atom, thereby reducing the zig-zagging behavior that often slows down the classical Frank-Wolfe method near the boundary of the feasible set \citep{canon1968tight}. 
Under strong convexity and suitable geometric assumptions on the constraint set, pairwise Frank-Wolfe methods enjoy global linear convergence guarantees \citep{lacoste2015global}. \citet{pedregosa2020linearly} proposed line-search and backtracking variants that preserve these improved rates. 
In this paper, we revisit the pairwise Frank-Wolfe method through an auto-conditioned scheme and show that it recovers the improved guarantees.

\paragraph{Away-Step Frank-Wolfe.}
Away-step Frank-Wolfe, formalized by \citet{guelat1986some}, is a classical variant of Frank-Wolfe that allows the algorithm to move away from atoms in the current active-set representation. 
Over polytope constraint set, linear convergence of the away-step variant was established by \citet{guelat1986some} under a strict complementarity-type assumption, and later sharpened and extended in affine-invariant and global forms by \citet{lacoste2015global}.
Related analyses have extended this picture in several directions, including non-strongly-convex composite objectives in the work of \citet{beck2017linearly} and geometry- and conditioning-based interpretations in the work of \citet{pena2016neumann, pena2019polytope}. In this paper, we revisit the away-step Frank-Wolfe method through an auto-conditioned scheme and show that it recovers the standard and improved guarantees without requiring prior knowledge of a global Lipschitz constant.

\subsection{Notation and Outline}
For a set $\cA$, we denote by $\lin(\cA)$ its linear span and by $\conv(\cA)$ its convex hull. We denote by $\bbZ_+$ the set of nonnegative integers. For any $T \in \bbZ_+$, we write $[T] := \{0, 1, \dots, T \}$.
For an index set $\cI \subseteq \bbZ_+$, we define $\cI^c := \bbZ_+ \setminus \cI$. We denote by $\{x_{k_t}\}_{t \geq 0}$ the subsequence corresponding to an increasing sequence $\{k_t\}_{t \geq 0}$. We write $\mathds{1}_{\{\mathrm P\}}$ for the indicator of a statement $\mathrm P$, namely, $\mathds{1}_{\{\mathrm P\}} = 1$ if $\mathrm P$ holds; $= 0$ otherwise.

The remainder of the paper is organized as follows. Section~\ref{sec:framework} introduces the proposed auto-conditioned framework, presents the main outer-loop algorithm, and shows that several standard direction-finding rules fit into this framework. Section~\ref{sec:convergence} establishes the general convergence guarantees, covering both nonconvex and convex setting. Section~\ref{sec:acceleration} then develops accelerated rates for particular subroutines under stronger geometric and convexity assumptions.
Section~\ref{sec:numerical} concludes the paper with numerical experiments, showcasing the strength of the proposed framework. 

\section{Proposed Framework}\label{sec:framework}

We now introduce an adaptive variant of the classical closed-loop rule. 
The key difference is that the global Lipschitz constant $L$ in~\eqref{eq:closed-loop} is replaced by a local estimate $L_t$, updated recursively along the iterates. 
This yields an auto-conditioned framework that preserves the basic structure of Frank-Wolfe and Matching Pursuit methods while making the stepsize less sensitive to global smoothness estimates.

\cref{alg:ac-fw} provides the common outer-loop framework used throughout this paper. At iteration $t$, $\cS_t \subset \cA$ denotes the current active set of dictionary vectors, and $\alpha_{\cdot,t}$ denotes the vector of corresponding weights that comprise $x_t$, so that we maintain the active set representation $x_t = \sum_{s \in \cA} \alpha_{s,t} s$, and $\alpha_{s,t} = 0$ iff $s \not\in \cS_t$. The algorithm then computes $v_t$ to be the minimizer of $\grad f(x_t)^\top v$ over $v \in \cA$, and then calls a direction-finding subroutine, denoted by $\texttt{subrout}(v_t,x_t,\cS_t,\alpha_{\cdot,t})$, that returns a search direction~$d_t$ together with a maximal admissible stepsize $\gamma_t^{\max}$. The method then forms a trial point $\bar x_{t+1}$ using the current estimate $L_t$, updates this estimate through the local Lipschitz estimator \eqref{eq:ell-def}, and accepts the trial point only if the new estimate remains sufficiently controlled relative to $L_t$. While this acceptance step is reminiscent of a stepsize acceptance rule used in line  search methods, our framework is distinct from line search. The reason is that even for iterations where the new trial point $\bar{x}_{t+1}$ is \emph{not} accepted, we still end the iteration. This limits the number of function evaluations each iteration to $1$. In contrast, a line search rule may involve multiple function evaluations each iteration.

This abstraction allows \cref{alg:ac-fw} to cover several classical Frank-Wolfe-type variants through different choices of subroutine. 
In particular, \cref{alg:fw-sub} corresponds to the closed-loop Frank-Wolfe direction, and \cref{alg:mp-sub} corresponds to the standard Matching Pursuit direction. 
For these two variants, no additional state needs to be maintained, so $\cS_t,\alpha_{\cdot,t}$ may be ignored. 
In contrast, in the pairwise and away-step variants, given respectively in \cref{alg:pairwise-sub,alg:away-sub}, the subroutine must keep track of an active-set representation of the current iterate $(\cS_t,\alpha_{\cdot,t})$.

\begin{figure*}[!bt]
	\centering
	
	\begin{minipage}[t]{0.325\textwidth}
		\raggedright
		\begin{algobox}{\texttt{AC-FW} Algorithm}{alg:ac-fw}
			\alginput{damping sequence $\{r_t\}_{t\ge 0}$\!\!\!} \vspace{0.5ex}
			\alginit{$x_{-1}\in\cA$}
			\STATE $x_0 \in \argmin_{v\in\cA} \grad f(x_{-1})^\top v$
			\STATE $L_0 \gets \ell(x_{-1},x_0)$ \vspace{0.5ex}
			\FOR{$t=0,1,2,\ldots$} \vspace{0.5ex}
			\STATE $v_t \in \argmin_{v\in\cA} \grad f(x_t)^\top v$ \vspace{0.75ex}
			\STATE $(d_t,\gamma_t^{\max}) \!\gets\! \texttt{subrout}(v_t, x_t, \cS_t,\alpha_{\cdot,t})$\!\! \vspace{-1.4ex}
			\STATE $\gamma_t \gets \min\!\left\{\dfrac{\grad f(x_t)^\top d_t}{L_t\lVert d_t\rVert_2^2},\,\gamma_t^{\max}\right\}$ \vspace{0.65ex}
			\STATE $\bar x_{t+1} \gets x_t-\gamma_t d_t$ \vspace{0.5ex}
			\STATE $L_{t+1} \gets \max\!\{\ell(x_t,\bar x_{t+1}),\, r_tL_t\}$ \vspace{0.5ex}
			\IF{$f(\bar x_{t+1})<f(x_t)$}
			\STATE $x_{t+1} \gets \bar x_{t+1}$
			\ELSE
			\STATE $x_{t+1} \gets x_t$
			\ENDIF
			\ENDFOR
		\end{algobox}
	\end{minipage}
	\hfill
	\begin{minipage}[t]{0.285\textwidth}
		\raggedright
		
		\begin{algobox}{\texttt{CFW-subroutine}}{alg:fw-sub}
			\STATE $d_t \gets x_t-v_t, \ \gamma_t^{\max} \gets 1$
		\end{algobox}
		
		\vspace{1ex}
		
		\begin{algobox}{\texttt{MP-subroutine}}{alg:mp-sub}
			\STATE $d_t \gets -v_t, \ \gamma_t^{\max} \gets \infty$
		\end{algobox}
		
		\vspace{1ex}
		
		\begin{algobox}{\texttt{PFW-subroutine}}{alg:pairwise-sub}
			\alginit{$(\cS_0,\alpha_{\cdot,0}) \gets (\{x_0\}, \! 1)$ \!\!\!\!\!} \vspace{0.5ex}
			\STATE $s_t \in \argmax_{s\in\cS_t} \grad f(x_t)^\top s$ \vspace{0.5ex}
			\STATE $d_t \gets s_t-v_t, \ \gamma_t^{\max} \gets \alpha_{s_t,t}$ \vspace{1.1ex}
			\IF{$x_{t+1}=\bar x_{t+1}$}
			\STATE $\alpha_{s_t,t+1} \gets \alpha_{s_t,t}-\gamma_t$
			\STATE $\alpha_{v_t,t+1} \gets \alpha_{v_t,t}+\gamma_t$
			\STATE $\alpha_{s,t+1} \gets \alpha_{s,t}$ for $s\notin\{s_t,v_t\}$
			\STATE $\cS_{t+1} \gets \{s\in\cA:\alpha_{s,t+1}>0\}$
			\ELSE
			\STATE $(\cS_{t+1},\alpha_{\cdot,t+1})\gets(\cS_t,\alpha_{\cdot,t})$
			\ENDIF
		\end{algobox}
		
	\end{minipage}
	\begin{minipage}[t]{0.375\textwidth}
		\raggedright
		\begin{algobox}{\texttt{AFW-subroutine}}{alg:away-sub}
			\alginit{$(\cS_0,\alpha_{\cdot,0})\gets(\{x_0\},1)$} \vspace{0.5ex}
			\STATE $s_t \in \argmax_{s\in\cS_t} \grad f(x_t)^\top s$ \vspace{0.5ex}
			\IF{$\grad f(x_t)^\top(x_t-v_t) \!\ge\! \grad f(x_t)^\top(s_t-x_t)$}
			\STATE $d_t\gets x_t-v_t$, \ $\gamma_t^{\max}\gets 1$, \ $\texttt{step} \gets \texttt{FW}$
			\ELSE
			\STATE $d_t \gets s_t-x_t$,  $\gamma_t^{\max} \gets  \tfrac{\alpha_{s_t,t}}{1 - \alpha_{s_t,t}}$,  $\texttt{step} \gets  \texttt{AW}$ 
			\ENDIF
			\IF{$x_{t+1}=x_t$}
			\STATE $(\cS_{t+1},\alpha_{\cdot,t+1})\gets(\cS_t,\alpha_{\cdot,t})$
			\ELSIF{$\texttt{step}=\texttt{FW}$}
			\STATE $\alpha_{s,t+1}\gets (1-\gamma_t)\alpha_{s,t}$ for $s\neq v_t$
			\STATE $\alpha_{v_t,t+1}\gets (1-\gamma_t)\alpha_{v_t,t}+\gamma_t$
			\STATE $\cS_{t+1}\gets \{s\in\cA:\alpha_{s,t+1}>0\}$
			\ELSE
			\STATE $\alpha_{s,t+1}\gets (1+\gamma_t)\alpha_{s,t}$ for $s\neq s_t$
			\STATE $\alpha_{s_t,t+1}\gets (1+\gamma_t)\alpha_{s_t,t}-\gamma_t$
			\STATE $\cS_{t+1}\gets \{s\in\cA:\alpha_{s,t+1}>0\}$
			\ENDIF
		\end{algobox}
	\end{minipage}
\end{figure*}

We define two index sets associated with \cref{alg:ac-fw}.
\begin{definition}\label{def:index-sets}
For a fixed constant $\eta>1$, the index set of \emph{significant-descent iterations} is defined by
\begin{align}
    \label{eq:cI}
    \cI_\eta := \{t\geq 0 : L_{t+1} \leq \eta L_t\}.
\end{align}
The index set of \emph{good iterations} is defined by
\begin{align}\label{eq:cG}
\cG:=\{t \geq 0: \gamma_t^{\max} \geq 1 \ \text{or} \ \gamma_t < \gamma_t^{\max}\}.
\end{align}
\end{definition}

The analysis below relies on two abstract requirements. 
The first concerns the damping
sequence $\{r_t\}_{t \geq 0}$ used in updating the estimates $L_t$, while the second concerns the chosen direction-finding subroutine.

\begin{condition}[Damping schedule]
\label{cond:r_t} 
    The damping factor satisfies $r_t \in (0,1]$, and the infinite product satisfies $r:=\prod_{t\geq 0} r_t \in (0, 1]. $
\end{condition}

\cref{cond:r_t} is mild as it essentially requires the damping factors to approach $1$
sufficiently fast so that the cumulative shrinkage remains finite.
An admissible choice for the damping factor, which we employ throughout our numerical experiments, is given by
\begin{align}
    \label{eq:r_t}
    r_t = 1-\frac{1}{(t+1)\log^{1+\delta}(t+3)},
    \qquad \forall t\geq 0,
\end{align}
for some $\delta>0$.
\begin{lemma}\label{lemma:damping}
    Let $\{r_t\}_{t \geq 0}$ be given by \eqref{eq:r_t}. Then, \cref{cond:r_t} holds.
\end{lemma}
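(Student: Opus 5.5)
The plan is to check the two parts of \cref{cond:r_t} separately. The main work is showing that the infinite product $\prod_{t\ge0} r_t$ is strictly positive. Write $a_t := \frac{1}{(t+1)\log^{1+\delta}(t+3)}$, so that $r_t = 1-a_t$.

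\emph{Range of $r_t$.} For every $t\ge 0$ we have $t+3\ge 3>e$, so $\log(t+3)>1$ and hence $\log^{1+\delta}(t+3)>1$. Together with $t+1\ge 1$ this gives $0<a_t<1$, and therefore $r_t\in(0,1)\subset(0,1]$. The partial products $\prod_{t=0}^{T} r_t$ are then positive and nonincreasing in $T$. Hence the limit $r$ exists and lies in $[0,1]$, and it only remains to rule out $r=0$.

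\emph{Positivity of the product.} Since each $r_t>0$, it suffices to show that $\sum_{t\ge0}\log(1-a_t) > -\infty$. Let $a_{\max}:=\sup_t a_t = a_0 = 1/\log^{1+\delta}3<1$. I will use the elementary bound $\log(1-a)\ge -\frac{a}{1-a}\ge -\frac{a}{1-a_{\max}}$ for $a\in[0,a_{\max}]$. This reduces everything to showing $\sum_t a_t<\infty$.

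\emph{Summability of $a_t$.} This is a Bertrand-type series. Since $t+1\ge \tfrac13 (t+3)$, we get $a_t\le \frac{3}{(t+3)\log^{1+\delta}(t+3)}$. The function $u\mapsto \frac{1}{u\log^{1+\delta}u}$ is decreasing on $[3,\infty)$, so the integral test applies. It gives
\[
\sum_{t\ge0} a_t \le 3\left(\frac{1}{3\log^{1+\delta}3} + \int_3^\infty \frac{du}{u\log^{1+\delta}u}\right) = 3\left(\frac{1}{3\log^{1+\delta}3} + \frac{1}{\delta\log^{\delta}3}\right)<\infty,
\]
where the integral is computed with the substitution $w=\log u$.

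\emph{Conclusion.} Combining the last two steps,
\[
\log r = \sum_t \log r_t \ge -\frac{1}{1-a_{\max}}\sum_t a_t > -\infty,
\]
so $r>0$, and \cref{cond:r_t} holds. The only delicate point is the summability step, which is where $\delta>0$ is needed: for $\delta=0$ the series diverges like $\log\log T$ and the product would be zero.
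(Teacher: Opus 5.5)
Your proof is correct and follows the same route as the paper. Both arguments establish $r_t\in(0,1)$, show $\sum_t(1-r_t)<\infty$ by comparison with $\int_3^\infty \frac{dx}{x\log^{1+\delta}x}$, and then pass to $\sum_t -\log r_t<\infty$; you simply make the last two steps explicit, via $\log(1-a)\ge -\frac{a}{1-a}$ with $a\le a_0<1$ and a concrete integral-test bound, where the paper just invokes $r_t\to 1$ and comparability with the integral.
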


\begin{proof}
    For every $t\geq 0$, we clearly have $r_t \in (0,1)$.
    Moreover,
    \begin{align*}
        \sum_{t\geq 0}(1-r_t)
        = \sum_{t\geq 0}\frac{1}{(t+1)\log^{1+\delta}(t+3)}
        < \infty,
    \end{align*}
    since this series is comparable to the integral $\int_3^\infty \frac{dx}{x\log^{1+\delta}x}$, which is finite for $\delta > 0$.
    Since $r_t\in(0,1)$ and $r_t \to 1$, this implies $\sum_{t\geq 0}-\log(r_t)<\infty$, and therefore the infinite product $\prod_{t \geq 0} r_t$ converges to a strictly positive limit. 
    Thus, $r:=\prod_{t\geq 0} r_t \in (0,1]$, so \cref{cond:r_t} holds.
\end{proof}
We next show that the adaptive Lipschitz estimates produced by \cref{alg:ac-fw} remain uniformly bounded by the global smoothness constant.

\begin{lemma}\label{lemma:Lipschitz}
    Suppose $\{x_t\}_{t\geq 0}$ is the sequence generated by \cref{alg:ac-fw}. If \cref{ass:smooth-functions} holds and $r_t \leq 1$ for all $t$, then $L_t \leq L$ for any $t \geq 0$.
\end{lemma}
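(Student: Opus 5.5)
The plan is to argue by induction on $t$, using one elementary fact about the local estimator \eqref{eq:ell-def}: under \cref{ass:smooth-functions}, $\ell(x,y)\le L$ for every pair $x,y\in\R^n$.

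To prove this fact, take $x\neq y$ (the case $x=y$ gives $\ell=0\le L$). Write
\begin{align*}
f(y)-f(x)-\grad f(x)^\top(y-x)=\int_0^1 \big(\grad f(x+\tau(y-x))-\grad f(x)\big)^\top (y-x)\,d\tau .
\end{align*}
Bound the integrand by Cauchy--Schwarz and the Lipschitz property by $L\tau\|y-x\|_2^2$. Integrating gives the absolute value bound $\tfrac{L}{2}\|y-x\|_2^2$. This is the standard descent-lemma estimate, and because it controls the absolute value, it also covers the lower side. Multiplying by $2/\|y-x\|_2^2$ yields $\ell(x,y)\le L$.

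The induction then runs as follows.
\begin{itemize}
\item \emph{Base case.} $L_0=\ell(x_{-1},x_0)\le L$ by the fact above.
\item \emph{Inductive step.} Suppose $L_t\le L$. \cref{alg:ac-fw} sets $L_{t+1}=\max\{\ell(x_t,\bar x_{t+1}),\,r_tL_t\}$. The first term is at most $L$ by the fact above, whatever the trial point $\bar x_{t+1}$ is, and whether or not it is later accepted. The second term satisfies $r_tL_t\le L_t\le L$, using $r_t\le 1$ and $L_t\ge 0$. Nonnegativity holds because $\ell\ge 0$ by definition and $r_t>0$ under \cref{cond:r_t}; even for a general $r_t\le 1$, a nonnegative $L_t$ times $r_t\le1$ cannot exceed $L$. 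Hence $L_{t+1}\le L$.
\end{itemize}

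No step is a real obstacle. The only point needing care is that the trial point $\bar x_{t+1}$ and the points $x_{-1},x_0$ may lie outside $\cX$, for instance in the Matching Pursuit case where $\gamma_t^{\max}=\infty$. This is harmless because \cref{ass:smooth-functions} is assumed on all of $\R^n$. One should also note that the argument is purely per-step and needs neither \cref{cond:r_t} nor any property of the subroutine.
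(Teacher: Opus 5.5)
Your proposal is correct and follows the same induction as the paper: the base case $L_0=\ell(x_{-1},x_0)\le L$ comes from the two-sided descent inequality, and the step uses $L_{t+1}=\max\{\ell(x_t,\bar x_{t+1}),\,r_tL_t\}\le L$. Your extra care makes explicit what the paper leaves implicit. This covers the nonnegativity of $L_t$, which holds simply because each $L_t$ dominates a value of $\ell\ge 0$. It also covers the trial points that may lie outside $\cX$, which are harmless since the Lipschitz assumption is global.
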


\begin{proof}[Proof of \cref{lemma:Lipschitz}]
    We prove by induction on $t$.
    By definition, $L_0 = \ell(x_{-1}, x_0).$
    Since $f$ has $L$-Lipschitz gradient thanks to \cref{ass:smooth-functions}, the standard descent inequality gives
    \begin{align*}
        \left| f(y) - f(x) - \grad f(x)^\top (y-x) \right|
        \leq \frac{L}{2} \| y - x \|_2^2,
        \qquad \forall x, y \in \R^n.
    \end{align*}
    Applying this with $(x,y)=(x_{-1},x_0)$ yields $L_0\leq L$.
    
    Now assume that $L_t\leq L$ for some $t\geq 0$. Applying the same inequality with $(x,y)=(x_t,\bar x_{t+1})$, we obtain
    \[
        \ell(x_t,\bar x_{t+1})
        =
        \frac{2\big|f(\bar x_{t+1})-f(x_t)-\grad f(x_t)^\top(\bar x_{t+1}-x_t)\big|}
        {\|\bar x_{t+1}-x_t\|_2^2}
        \leq L.
    \]
    Moreover, by \ref{cond:r_t}, we have $r_tL_t\leq L_t\leq L$.
    Therefore, $L_{t+1} = \max\{\ell(x_t,\bar x_{t+1}),\,r_tL_t\} \leq L$.
    This closes the induction and proves the claim.
\end{proof}

The next lemma establishes that significant-descent iterations occur infinitely often for \cref{alg:ac-fw} and bounds the gap between consecutive indices in $\cI_\eta$.

\begin{lemma} \label{lemma:index}
    Suppose $\{x_t\}_{t \geq 0}$ is the sequence generated by \cref{alg:ac-fw}, let $\eta>1$, and assume
    $L_0>0$. If \cref{ass:smooth-functions} and \cref{cond:r_t} hold, then non-significant-descent iterations only occur finitely many times, i.e., $|\cI_\eta^{\mathrm{c}}| < \infty$ where the index set $\cI_\eta$, is defined in~\eqref{eq:cI}. (Consequently, $|\cI_\eta| = \infty$.)
    Moreover, denoting the elements of $\cI_\eta$ increasing order by $\{k_t\}_{t \geq 0}$, it holds that for every $t\geq 0$,
    \begin{equation}\label{eq:index}
        t \leq k_t \leq t + \left\lfloor \log_\eta\!\left(\frac{L}{rL_0}\right) \right\rfloor.
    \end{equation}
\end{lemma}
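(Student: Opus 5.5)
We use a multiplicative potential argument on the sequence $\{L_t\}$, combining the upper bound $L_t\le L$ from \cref{lemma:Lipschitz} with a lower bound coming from the damping schedule. The key observation is that the update $L_{t+1}=\max\{\ell(x_t,\bar x_{t+1}),r_tL_t\}$ always gives $L_{t+1}\ge r_tL_t$. On a non-significant-descent iteration, that is $t\in\cI_\eta^c$, we instead have the stronger growth $L_{t+1}>\eta L_t$.

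\textbf{Step 1: a product lower bound.} Since $r_t\le 1$, on $t\in\cI_\eta^c$ we have $L_{t+1}>\eta L_t\ge \eta r_t L_t$, while on $t\in\cI_\eta$ we have $L_{t+1}\ge r_tL_t$. Chaining these from $0$ to $N-1$ gives
\begin{align*}
L_N \;\ge\; L_0\,\eta^{\,|\cI_\eta^c\cap\{0,\dots,N-1\}|}\prod_{t=0}^{N-1} r_t \;\ge\; rL_0\,\eta^{\,|\cI_\eta^c\cap\{0,\dots,N-1\}|},
\end{align*}
where the last inequality uses that the partial products of $r_t\in(0,1]$ decrease to $r>0$ by \cref{cond:r_t}. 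We need $L_0>0$ here so that the estimates stay positive along the chain; this is easy, since $L_{t+1}\ge r_tL_t>0$ by induction.

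\textbf{Step 2: combine with the upper bound.} \cref{lemma:Lipschitz} gives $L_N\le L$. Together with Step 1 this yields $\eta^{m_N}\le L/(rL_0)$, where $m_N:=|\cI_\eta^c\cap\{0,\dots,N-1\}|$. Since $m_N$ is an integer,
\[
m_N\le M:=\left\lfloor \log_\eta\!\left(\frac{L}{rL_0}\right)\right\rfloor
\]
uniformly in $N$. Letting $N\to\infty$ gives $|\cI_\eta^c|\le M<\infty$, and hence $|\cI_\eta|=\infty$. Note that $L\ge L_0>0$ and $r\le 1$, so the logarithm is nonnegative and $M\ge 0$ is well defined.

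\textbf{Step 3: index bounds.} The lower bound $k_t\ge t$ is immediate, because the $k_t$ are strictly increasing nonnegative integers starting from $k_0\ge 0$. For the upper bound, the indices $0,\dots,k_t$ contain exactly $t+1$ elements of $\cI_\eta$, namely $k_0,\dots,k_t$. The remaining $k_t-t$ indices all lie in $\cI_\eta^c$, so $k_t-t\le |\cI_\eta^c|\le M$, which is \eqref{eq:index}.

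The only delicate point is Step 1: the bound must hold with the full infinite product $r$ rather than a partial product, and the strict inequality defining $\cI_\eta^c$ must be handled correctly. Both are routine once the chaining is written out.
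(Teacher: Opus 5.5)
Your proposal is correct and follows essentially the same argument as the paper. Both proofs chain $L_{t+1}\ge r_tL_t$ on $\cI_\eta$ and $L_{t+1}>\eta L_t$ on $\cI_\eta^c$, bound the partial product of the $r_t$ below by $r$, compare with $L_{T+1}\le L$ from \cref{lemma:Lipschitz}, and then count the indices in $[k_t]$ to get \eqref{eq:index}. The only cosmetic difference is that you keep the factor $r_t$ on non-significant iterations as well (via $\eta L_t\ge \eta r_tL_t$), whereas the paper drops it by noting that the product over $[T]\cap\cI_\eta$ dominates the full partial product; the two are equivalent.
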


\begin{proof}[Proof of \cref{lemma:index}]
    Since $L_{t+1} = \max\{\ell(x_t, \bar x_{t+1}), r_t L_t\}$, we have ${L_{t+1}}/{L_t} \geq r_t$ for all $t \geq 0$.
    Moreover, if $t \notin \cI_\eta$, then by definition $L_{t+1}> \eta L_t$, and therefore ${L_{t+1}}/{L_t} > \eta$. 
    Hence, for any $T \geq 0$ and letting $[T] := \{0, \dots, T \}$, we have
    \begin{align*}
        \frac{L_{T+1}}{L_0} = \prod_{t = 0}^{T} \frac{L_{t+1}}{L_t} 
        &= \left(\prod_{t \in [T] \cap \cI_\eta} \frac{L_{t+1}}{L_t}\right) \left(\prod_{t \in [T] \cap \cI_\eta^c} \frac{L_{t+1}}{L_t}\right)\\
        &\geq \left(\prod_{t \in [T] \cap \cI_\eta} r_t \right) \left(\prod_{t \in [T] \cap \cI_\eta^c} \eta \right) \\
        &\geq \left(\prod_{t \in [T]} r_t\right) \eta^{\,|[T] \cap \cI_\eta^c|}\\
        &\geq r \, \eta^{\,|[T] \cap \cI_\eta^c|}.
    \end{align*}
    By \cref{lemma:Lipschitz}, we have $L_{T+1} \leq L$ for all $T \geq 0$.
    Thus, $r \, \eta^{\,|[T] \cap \cI_\eta^c|} \leq {L_{T+1}}/{L_0} \leq {L}/{L_0}.$
    Therefore,
    \[
        |[T]\setminus \cI_\eta|
        =
        |[T]\cap \cI_\eta^c|
        \leq
        \log_\eta\!\left(\frac{L}{rL_0}\right),
        \qquad \forall T \geq 0.
    \]
    This implies that $\cI_\eta^c$ is finite, and hence $\cI_\eta$ is infinite, and the increasing sequence $\{k_t\}_{t\geq 0}$ is well defined.

    Now fix $t\geq 0$. Since $k_t$ is the $t$-th element of an increasing sequence of nonnegative integers, we clearly have $k_t \geq t$. On the other hand, we have $\left| \cI_\eta \cap [k_t] \right| = t+1$.
    Therefore,
    \begin{align*}
        k_t+1
        = \left| \cI_\eta\cap[k_t] \right| + \left| \cI_\eta^c \cap [k_t] \right|
        = (t+1) + \left| \cI_\eta^c \cap [k_t] \right| 
        \leq t+1+\left\lfloor \log_\eta\!\left(\frac{L}{rL_0}\right)\right\rfloor.
    \end{align*}
    Thus, the claim in \eqref{eq:index} follows.
\end{proof}

The second requirement concerns the output of the subroutine.

\begin{condition}[Subroutine admissibility] \label{cond:sub}
   For any $t \geq 0$, the following hold
    \begin{enumerate}[label=(\roman*)]
        \item \label{cond:d_t} The search direction satisfies $\|d_t\|_2 \leq D_{\cA}$ and the whole segment generated by this direction remains feasible, that is, $x_t-\gamma d_t \in \cX$ for any $\gamma \in [0, \gamma_t^{\max}]$.
        \item \label{cond:cG} The set of good iterations $\cG$ defined in \cref{eq:cG} is infinite.
        \item \label{cond:convex} If the function $f$ is convex, then there exists a constant $R\geq 1$, independent of $t$, such that 
        $$\grad f(x_t)^\top d_t \geq \frac{f(x_t)-f(x^\star)}{R},$$
        where $x^\star$ denotes an optimal solution of~\eqref{eq:opt}.
    \end{enumerate}
\end{condition}

The next lemma collects the basic properties of \cref{alg:ac-fw} that will be used repeatedly throughout the paper. 

\begin{lemma}
\label{lem:basic}
    Suppose that \cref{ass:X-compact,ass:smooth-functions} hold, and that $\{x_t\}_{t\geq 0}$ is the sequence generated by \cref{alg:ac-fw}. Under \cref{cond:r_t,cond:sub}, for every $t\geq 0$, the following hold.
    \begin{enumerate}[label=(\roman*)]
        \item \label{lem:basic:one-step}
        The trial point and the next iterate satisfy
        \begin{align*}
            f(x_{t+1})
            \leq
            f(\bar x_{t+1})
            \leq
            f(x_t)-\gamma_t \grad f(x_t)^\top d_t+\frac{L_{t+1}}{2}\gamma_t^2\|d_t\|_2^2.
        \end{align*}
        \item \label{lem:basic:I}
        If $t\in \cI_\eta$, then
        \begin{align*}
            f(x_{t+1})
            \leq
            f(x_t)-\left(1-\frac{\eta}{2}\right)\gamma_t \grad f(x_t)^\top d_t.
        \end{align*}

        \item \label{lem:basic:GI}
        If $t\in \cG\cap \cI_\eta$, then
        \begin{align*}
            f(x_{t+1})
            \leq
            f(x_t)-\left(1-\frac{\eta}{2}\right)
            \min\left\{
                1,
                \frac{\grad f(x_t)^\top d_t}{L_t\|d_t\|_2^2}
            \right\}\grad f(x_t)^\top d_t.
        \end{align*}
        \item \label{lem:basic:fw-gap}
        If, in addition, $f$ is convex and $\cX = \conv(\cA)$, then
        \begin{align*}
            \grad f(x_t)^\top (x_t-v_t)\geq f(x_t)-f(x^\star).
        \end{align*}
    \end{enumerate} 
\end{lemma}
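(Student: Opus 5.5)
The four claims are essentially independent, and each follows from the definitions in \cref{alg:ac-fw} together with the local estimator \eqref{eq:ell-def}. I would prove them in order, since (ii) uses (i) and (iii) uses (ii). Throughout I would assume $L_t>0$, as is implicit in the stepsize formula. I would also dispose of the degenerate case $d_t=0$ or $\grad f(x_t)^\top d_t=0$ at the start: then $\gamma_t=0$ (or $\bar x_{t+1}=x_t$), the trial point is rejected or equals $x_t$, and every inequality holds trivially.

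For (i), the first inequality comes from the acceptance test. If $f(\bar x_{t+1})<f(x_t)$ then $x_{t+1}=\bar x_{t+1}$. Otherwise $x_{t+1}=x_t$ and $f(x_t)\le f(\bar x_{t+1})$. For the second inequality, the definition of $\ell$ with $(x,y)=(x_t,\bar x_{t+1})$ drops the absolute value one way and gives
\[
f(\bar x_{t+1})-f(x_t)-\grad f(x_t)^\top(\bar x_{t+1}-x_t)\le \tfrac12\ell(x_t,\bar x_{t+1})\|\bar x_{t+1}-x_t\|_2^2 .
\]
Now substitute $\bar x_{t+1}-x_t=-\gamma_t d_t$ and use $\ell(x_t,\bar x_{t+1})\le L_{t+1}$, which holds by the update rule for $L_{t+1}$. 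Note that \cref{ass:smooth-functions} is not even needed here.

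For (ii), start from (i) and use $L_{t+1}\le \eta L_t$. Since $\gamma_t\le \grad f(x_t)^\top d_t/(L_t\|d_t\|_2^2)$, we get $L_t\gamma_t^2\|d_t\|_2^2\le \gamma_t\grad f(x_t)^\top d_t$. Hence the quadratic term is at most $\tfrac{\eta}{2}\gamma_t\grad f(x_t)^\top d_t$, which gives the claim.

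For (iii), the key step is a lower bound on $\gamma_t$ for $t\in\cG$. If $\gamma_t<\gamma_t^{\max}$, the minimum in the stepsize is attained by the ratio, so $\gamma_t=\grad f(x_t)^\top d_t/(L_t\|d_t\|_2^2)$. Otherwise $\gamma_t^{\max}\ge 1$, and then $\gamma_t=\min\{\text{ratio},\gamma_t^{\max}\}\ge\min\{\text{ratio},1\}$. Plugging this lower bound into (ii) requires the coefficient $1-\eta/2$ to be nonnegative, which is the one subtle point. I would state explicitly that this is where $\eta\in(1,2]$ is used; for $\eta>2$ the bound in (ii) is vacuous in the wrong direction. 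Since $\grad f(x_t)^\top d_t\ge 0$, multiplying by the smaller $\min\{1,\cdot\}$ only weakens the decrease, so the inequality follows.

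For (iv), convexity gives $f(x^\star)\ge f(x_t)+\grad f(x_t)^\top(x^\star-x_t)$. A linear function attains its minimum over $\conv(\cA)$ at a point of $\cA$, so $\grad f(x_t)^\top x^\star\ge\min_{v\in\cA}\grad f(x_t)^\top v=\grad f(x_t)^\top v_t$. Combining the two gives $\grad f(x_t)^\top(x_t-v_t)\ge f(x_t)-f(x^\star)$. I expect no real obstacle here. The only care needed is the sign and range of $\eta$ in (ii)--(iii) and the degenerate-direction case.
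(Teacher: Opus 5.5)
Your proposal is correct and follows the paper's proof essentially step for step. Part (i) comes from the acceptance test together with $\ell(x_t,\bar x_{t+1})\le L_{t+1}$; part (ii) bounds the quadratic term using $\gamma_t L_t\|d_t\|_2^2\le\grad f(x_t)^\top d_t$ and $L_{t+1}\le\eta L_t$; part (iii) uses the lower bound $\gamma_t\ge\min\{1,\grad f(x_t)^\top d_t/(L_t\|d_t\|_2^2)\}$ on $\cG$; and part (iv) combines convexity with the fact that $v_t$ also minimizes the linear function over $\conv(\cA)$.

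Your write-up is, if anything, more careful than the paper's. You use the correct sign $\bar x_{t+1}-x_t=-\gamma_t d_t$, whereas the paper writes $+\gamma_t d_t$, and you make explicit that (iii) needs $1-\eta/2\ge 0$ and $\grad f(x_t)^\top d_t\ge 0$. Note, though, that for $\eta\ge 2$ you do not need a separate argument: (ii) and (iii) hold trivially, because their right-hand sides are at least $f(x_t)$ and the acceptance rule gives $f(x_{t+1})\le f(x_t)$.
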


\begin{proof}[Proof of \cref{lem:basic}]
    We first prove \ref{lem:basic:one-step}. By the update rule,
    \begin{align*}
        x_{t+1}
        =
        \begin{cases}
            \bar x_{t+1}, & \text{if } f(\bar x_{t+1})<f(x_t),\\
            x_t, & \text{otherwise}.
        \end{cases}
    \end{align*}
    Hence, in either case, $f(x_{t+1})\leq f(\bar x_{t+1})$.
    Moreover, by definition $L_{t+1}\geq \ell(x_t,\bar x_{t+1})$, implying
    \begin{align*}
        f(\bar x_{t+1})
        &\leq
        f(x_t)+\grad f(x_t)^\top(\bar x_{t+1}-x_t)
        +\frac{L_{t+1}}{2}\|\bar x_{t+1}-x_t\|_2^2.
    \end{align*}
    The claim then follows since, by construction, $\bar x_{t+1} - x_t = \gamma_t d_t$.

    We now prove \ref{lem:basic:I}. By the definition of $\gamma_t$, we have $\gamma_tL_t\|d_t\|_2^2\leq \grad f(x_t)^\top d_t$. Combining this with \ref{lem:basic:one-step}, we obtain
    \begin{align*}
        f(x_{t+1})
        &\leq
        f(x_t)-\left(1-\frac{L_{t+1}}{2L_t}\right)\gamma_t \grad f(x_t)^\top d_t.
    \end{align*}
    If $t\in \cI_\eta$, then $L_{t+1}\leq \eta L_t$. Substituting this bound into the inequality above establishes the claim.

    We next prove \ref{lem:basic:GI}. If $t\in\cG\cap \cI_\eta$, then either $\gamma_t^{\max}\geq 1$ or $\gamma_t<\gamma_t^{\max}$.
    If $\gamma_t^{\max}\geq 1$, then
    \begin{align*}
        \gamma_t
        =
        \min\left\{
            \frac{\grad f(x_t)^\top d_t}{L_t\|d_t\|_2^2},
            \gamma_t^{\max}
        \right\}
        \geq
        \min\left\{
            \frac{\grad f(x_t)^\top d_t}{L_t\|d_t\|_2^2},
            1
        \right\}.
    \end{align*}
    If instead $\gamma_t<\gamma_t^{\max}$, then necessarily
    \begin{align*}
        \gamma_t=\frac{\grad f(x_t)^\top d_t}{L_t\|d_t\|_2^2}
        \geq
        \min\left\{
            \frac{\grad f(x_t)^\top d_t}{L_t\|d_t\|_2^2},
            1
        \right\}.
    \end{align*}
    Hence, in both cases,
    \begin{align*}
        \gamma_t
        \geq
        \min\left\{
            1,
            \frac{\grad f(x_t)^\top d_t}{L_t\|d_t\|_2^2}
        \right\}.
    \end{align*}
    The claim then follows by combining this with \ref{lem:basic:I}.

    Finally, if $f$ is convex and $\cX=\conv(\cA)$, then since $v_t$ minimizes the linear functional $\grad f(x_t)^\top v$ over $\cA$ and $x^\star\in\conv(\cA)$, we have
    $\grad f(x_t)^\top(x_t-v_t) \geq \grad f(x_t)^\top(x_t-x^\star)$.
    By convexity of $f$, we additionally have $\grad f(x_t)^\top(x_t - x^\star) \geq f(x_t) - f(x^\star)$. Combining these two establishes the claim.
\end{proof}

We now verify that several classic Frank-Wolfe direction-finding rules satisfy these admissibility conditions. 
For each variant, we check \cref{cond:sub} and then combine this verification with \cref{lemma:index} to obtain a quantitative lower bound on the cardinality of the index set $\cG \cap \cI_\eta$, which will be used later in the convex case.

\subsection{Closed-Loop Frank-Wolfe Subroutine}
\label{sec:FW-subroutine}

We begin with the closed-loop Frank-Wolfe subroutine given in \cref{alg:fw-sub}, where the feasible set is defined as $\cX = \conv(\cA)$. 

\begin{lemma}\label{lem:fw-sub}
    Let $\eta > 1$. Under \cref{ass:X-compact,ass:smooth-functions}, the closed-loop Frank-Wolfe subroutine in \cref{alg:fw-sub} satisfies Conditions~\ref{cond:sub}\,\ref{cond:d_t} and \ref{cond:cG}. More precisely, for any $ t \geq 0$,
    \begin{align}
        \label{eq:fw:lb}
        |\cG \cap \cI_\eta \cap [t]| \geq t+1 -\left\lfloor\log_\eta\left(\frac{L}{r L_0}\right)\right\rfloor.
    \end{align}
    If additionally $f$ is convex, then \cref{cond:sub}\,\ref{cond:convex} holds with $R=1$.
\end{lemma}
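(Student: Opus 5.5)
The plan is to check the three parts of \cref{cond:sub} one at a time, directly from the definition of \cref{alg:fw-sub} ($d_t = x_t - v_t$, $\gamma_t^{\max}=1$). The lower bound \eqref{eq:fw:lb} will then follow from \cref{lemma:index}.

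For \cref{cond:sub}\,\ref{cond:d_t}, I first argue by induction that every iterate stays in $\cX=\conv(\cA)$.
\begin{itemize}
\item The base case holds because $x_0\in\cA$.
\item For $\gamma\in[0,1]$ we have $x_t-\gamma d_t=(1-\gamma)x_t+\gamma v_t$, which is a convex combination of $x_t\in\conv(\cA)$ and $v_t\in\cA$.
\item Hence the whole segment lies in $\cX$, and in particular so does $x_{t+1}\in\{x_t,\bar x_{t+1}\}$.
\end{itemize}
For the norm bound, write $x_t=\sum_s \alpha_s s$ as a convex combination of atoms. Then
\[
\|x_t-v_t\|_2\le\sum_s\alpha_s\|s-v_t\|_2\le D_{\cA},
\]
using \cref{ass:X-compact}.

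For \cref{cond:sub}\,\ref{cond:cG}, note that $\gamma_t^{\max}=1\ge 1$ for every $t$, so by \eqref{eq:cG} every iteration is good, i.e.\ $\cG=\bbZ_+$. It follows that $|\cG\cap\cI_\eta\cap[t]|=|\cI_\eta\cap[t]|=t+1-|\cI_\eta^c\cap[t]|$. The proof of \cref{lemma:index} already shows $|[T]\cap\cI_\eta^c|\le\log_\eta(L/(rL_0))$ for every $T$. Since the left side is an integer, it is also bounded by the floor, and this gives \eqref{eq:fw:lb}.

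Strictly, \cref{lemma:index} assumes $L_0>0$, so the main subtlety is the degenerate case $L_0=0$.
\begin{itemize}
\item If $L_0>0$, the argument above applies directly.
\item If $L_0=0$, the right-hand side of \eqref{eq:fw:lb} is $-\infty$ (or undefined), and the bound is vacuous or must be read as holding whenever $L_0>0$.
\end{itemize}
So I would state the result under that standing assumption. In either case, infiniteness of $\cG$ does not depend on it, since $\cG=\bbZ_+$ regardless.

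Finally, for convex $f$, \cref{lem:basic}\,\ref{lem:basic:fw-gap} gives $\grad f(x_t)^\top(x_t-v_t)\ge f(x_t)-f(x^\star)$. This is exactly \cref{cond:sub}\,\ref{cond:convex} with $R=1$.
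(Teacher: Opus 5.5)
Your proof is correct and follows the paper's argument essentially step for step: feasibility of the segment via $(1-\gamma)x_t+\gamma v_t$, the diameter bound on $\|x_t-v_t\|_2$, $\cG=\bbZ_+$ because $\gamma_t^{\max}=1$ (so \eqref{eq:fw:lb} reduces to the counting bound from \cref{lemma:index}), and $R=1$ from \cref{lem:basic}\,\ref{lem:basic:fw-gap}. Your explicit induction that $x_t\in\conv(\cA)$ and your remark that the bound implicitly requires $L_0>0$ (as \cref{lemma:index} does) are extra points of care that the paper leaves implicit.
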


\begin{proof}[Proof of \cref{lem:fw-sub}]
    Under \cref{alg:fw-sub}, we have $d_t = x_t-v_t$ and $\gamma_t^{\max}=1$. 
    Since $x_t\in \conv(\cA)$ and $v_t \in \cA \subseteq \conv(\cA)$, for any $\gamma \in[0,\gamma_t^{\max}]=[0,1]$, we have $x_t-\gamma d_t = (1-\gamma) x_t + \gamma v_t \in \conv(\cA).$
    Moreover, because both $x_t$ and $v_t$ belong to $\conv(\cA)$, and the diameter of $\conv(\cA)$ equals the diameter of $\cA$, it follows that $\|d_t\|_2 = \|x_t-v_t\|_2 \leq D_{\cA}.$
    Hence, \cref{cond:sub}\,\ref{cond:d_t} is satisfied.
    
    Next, since $\gamma_t^{\max}=1$ for every $t\geq 0$, every iteration belongs to $\cG$ by
    definition. Therefore, $\cG=\bbZ_+$, and in particular \cref{cond:sub}\,\ref{cond:cG} holds.
    We now prove the cardinality bound. 
    Since $\cG = \bbZ_+$, we have $|\cG \cap \cI_\eta \cap [t]| = |\cI_\eta  \cap [t]|$. The bound~\eqref{eq:fw:lb} now follows directly from \cref{lemma:index}, which implies that among the first $t+1$ iterations, at most $\left\lfloor \log_\eta\!\left(\frac{L}{rL_0}\right)\right\rfloor$ indices can lie outside $\cI_\eta$. 
    
    Finally, suppose that $f$ is convex. Since here $\cX=\conv(\cA)$, \cref{lem:basic}\,\ref{lem:basic:fw-gap} yields $\grad f(x_t)^\top d_t = \grad f(x_t)^\top(x_t-v_t) \geq
    f(x_t)-f(x^\star).$ Thus, \cref{cond:sub}\,\ref{cond:convex} holds with $R=1$.
\end{proof}

\subsection{Matching Pursuit Subroutine}
\label{sec:MP-subroutine}
We next consider the Matching Pursuit subroutine given in \cref{alg:mp-sub}, where the feasible set is defined as $\cX = \lin(\cA)$. 
We verify that this choice fits our abstract framework and then deduce a lower bound on the cardinality of $\cG \cap \cI_\eta$.
To do so, recall that given a set $\cA \subset \R^n$ such that $0 \in \conv(\cA)$, the associated atomic norm of a vector $x \in \R^n$ is defined by $\| x \|_{\cA} := \inf\{c > 0 : x \in c \cdot \conv(\cA) \}$.

\begin{assumption}[Regular dictionary] \label{ass:mp}
    The dictionary $\cA $ is symmetric around the origin, that is, $\cA = -\cA$. Furthermore, if $f$ is convex, the initial level set has bounded atomic radius, that is,
    \begin{align*}
        R_{\cA} = \max\left\{ 1,\; \sup\left\{ \|x-x^\star\|_{\cA}: x \in \lin(\cA),\, f(x)\leq f(x_0) \right\} \right\} <\infty.
    \end{align*}
\end{assumption}

The symmetry part of \cref{ass:mp} is without loss of generality for Matching Pursuit. Indeed, replacing $\cA$ by its symmetrization $\tilde{\cA}:= \cA \cup(-\cA)$ does not change the feasible set, since $\lin(\tilde{\cA}) = \lin(\cA)$.
By contrast, the finiteness of $R_{\cA}$ is a regularity assumption on the initial level set. Nonetheless, this condition is only required to show that \cref{cond:sub}\,\ref{cond:convex} holds.

\begin{lemma}\label{lem:mp-sub}
    Let $\eta > 1$. Under \cref{ass:X-compact,ass:smooth-functions,ass:mp}, the Matching Pursuit subroutine in \cref{alg:mp-sub} satisfies Conditions~\ref{cond:sub}\,\ref{cond:d_t} and \ref{cond:cG}.
    More precisely, for any $ t \geq 0$, the bound \eqref{eq:fw:lb} holds for the MP subroutime as well. If additionally $f$ is convex, then \cref{cond:sub}\,\ref{cond:convex} holds with $R = R_{\cA}$.
\end{lemma}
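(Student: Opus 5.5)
The plan follows the template of \cref{lem:fw-sub}, with three parts: feasibility and norm bound, good iterations, and the convex lower bound.

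For \cref{cond:sub}\,\ref{cond:d_t}, note that under \cref{alg:mp-sub} we have $d_t=-v_t$ with $v_t\in\cA$, and $\gamma_t^{\max}=\infty$. Then $x_t-\gamma d_t=x_t+\gamma v_t$, which lies in $\lin(\cA)$ for every $\gamma\ge 0$ because $x_t\in\lin(\cA)$; an induction from $x_0\in\cA$ shows $x_t\in\lin(\cA)$ for all $t$. For the norm bound, symmetry in \cref{ass:mp} gives $-v_t\in\cA$, so
\[
\|d_t\|_2=\tfrac12\|v_t-(-v_t)\|_2\le \tfrac12 D_{\cA}\le D_{\cA}.
\]

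For \cref{cond:sub}\,\ref{cond:cG}, since $\gamma_t^{\max}=\infty\ge 1$ for every $t$, we get $\cG=\bbZ_+$. Hence $|\cG\cap\cI_\eta\cap[t]|=|\cI_\eta\cap[t]|$, and \eqref{eq:fw:lb} follows from \cref{lemma:index} exactly as in \cref{lem:fw-sub}. One point to check is that $\gamma_t$ is finite, i.e.\ that $d_t\neq 0$ and $\grad f(x_t)^\top d_t\ge 0$. By symmetry,
\[
\grad f(x_t)^\top(-v_t)=\max_{v\in\cA}\grad f(x_t)^\top v\ge 0 .
\]
If $v_t=0$ then $\grad f(x_t)^\top v=0$ for all $v\in\cA$, so $x_t$ is stationary and that degenerate case can be set aside.

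For \cref{cond:sub}\,\ref{cond:convex} I would argue as follows. The iterates are monotone, $f(x_t)\le f(x_0)$, by the acceptance test, so $x_t$ lies in the initial level set within $\lin(\cA)$. Therefore
\[
c:=\|x_t-x^\star\|_{\cA}\le R_{\cA},
\]
and since the infimum defining the atomic norm is attained ($\cA$ is compact), $x_t-x^\star=c\,z$ with $z\in\conv(\cA)$. By convexity of $f$,
\[
f(x_t)-f(x^\star)\le \grad f(x_t)^\top(x_t-x^\star)=c\,\grad f(x_t)^\top z\le c\max_{v\in\cA}\grad f(x_t)^\top v=c\,\grad f(x_t)^\top d_t,
\]
where the last equality uses symmetry as above. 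Since $\grad f(x_t)^\top d_t\ge 0$ and $c\le R_{\cA}$, this gives $\grad f(x_t)^\top d_t\ge (f(x_t)-f(x^\star))/R_{\cA}$.

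The main obstacle is this last step. Two points need care. First, attainment of the atomic norm infimum: if attainment is doubtful, I would use $c+\epsilon$ and let $\epsilon\to 0$. Second, the replacement of $c$ by $R_{\cA}$ relies on the nonnegativity of $\grad f(x_t)^\top d_t$, which the symmetry of $\cA$ provides.
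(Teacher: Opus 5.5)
Your proposal is correct and follows the paper's proof essentially step for step. The three ingredients match: the norm bound via symmetry ($\|d_t\|_2=\|v_t\|_2\le D_{\cA}/2$), then $\cG=\bbZ_+$ because $\gamma_t^{\max}=\infty$ combined with \cref{lemma:index}, and finally the convex bound via monotonicity, $\|x_t-x^\star\|_{\cA}\le R_{\cA}$, and $\max_{a\in\cA}\grad f(x_t)^\top a=\grad f(x_t)^\top d_t$. The only differences are cosmetic: you unpack the polar inequality that the paper cites from Rockafellar by writing $x_t-x^\star=c\,z$ with $z\in\conv(\cA)$, and you make explicit the nonnegativity of $\grad f(x_t)^\top d_t$ and the degenerate case $v_t=0$, both of which the paper uses implicitly.
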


\begin{proof}[Proof of \cref{lem:mp-sub}]
    Under \cref{alg:mp-sub}, we have $d_t = -v_t$ and $\gamma_t^{\max} = \infty.$
    Since $\cA = -\cA$, $-v_t \in \cA$, and therefore $2\|v_t\|_2 = \| v_t - (-v_t) \|_2 \leq D_{\cA}$.
    Thus, we have $\|d_t\|_2 = \|v_t\|_2 \leq D_{\cA}.$
    Furthermore, for any $\gamma \in [0, \infty)$, $x_t - \gamma d_t = x_t + \gamma v_t \in \lin(\cA) = \cX$, since both $x_t$ and $v_t$ belong to $\lin(\cA)$. Thus, \cref{cond:sub}\,\ref{cond:d_t} holds.

    Next, since $\gamma_t^{\max} = \infty \geq 1$ for every $t \geq 0$, every iteration belongs to $\cG$. Therefore, $\cG = \bbZ_+$, and in particular \cref{cond:sub}\,\ref{cond:cG} holds. The bound~\eqref{eq:fw:lb} now follows directly from \cref{lemma:index}.

    Finally, suppose that $f$ is convex. Since the acceptance rule in \cref{alg:ac-fw} guarantees $f(x_t)\leq f(x_0)$ for all $t\geq 0$, \cref{ass:mp} implies $\|x_t - x^\star\|_{\cA} \leq R_{\cA}$.
    We thus have
    \begin{align*}
        f(x_t) - f(x^\star) 
        \leq \grad f(x_t)^\top (x_t-x^\star)
        \leq
        \Big(\max_{a\in \cA} \grad f(x_t)^\top a \Big) \cdot \|x_t-x^\star\|_{\cA}
        \leq
        R_{\cA} \Big(\max_{a\in \cA} \grad f(x_t)^\top a \Big),
    \end{align*}
    where the first inequality follows from convexity of $f$ and the second inequality follows from the polar inequality \citep[Theorem~15.1]{rockafellar1997convex}.
    Since $\cA = -\cA$ and $v_t \in \argmin_{v \in \cA} \grad f(x_t)^\top v$, we have
    \begin{align*}
        \max_{a\in \cA} \grad f(x_t)^\top a
        = -\min_{a\in \cA} \grad f(x_t)^\top a
        = -\grad f(x_t)^\top v_t
        = \grad f(x_t)^\top d_t.
    \end{align*}
    Combining the above inequalities yields $f(x_t)-f(x^\star)\leq R_{\cA}\,\grad f(x_t)^\top d_t.$
    Thus, \cref{cond:sub}\,\ref{cond:convex} holds with $R=R_{\cA}$.
\end{proof}

\subsection{Pairwise Frank-Wolfe Subroutine}
\label{sec:PFW-subroutine}

We next consider the pairwise Frank-Wolfe subroutine given in \cref{alg:pairwise-sub}, where the feasible set is $\cX=\conv(\cA)$.
This variant is naturally implemented by maintaining an active-set representation $x_t = \sum_{s\in\cA} \alpha_{s,t} s$, where, by convention, $\alpha_{s,t}=0$ for $s\notin\cS_t$. The essential idea of \cref{alg:pairwise-sub} is to ``swap out'' the worst performing atom of the current active set $s_t \in \cS_t$, and swap in the most promising atom $v_t \in \cA$.
This can be implemented recursively by updating the pair $(\cS_t, \alpha_{\cdot,t})$ from one iteration to the next. If the trial point is accepted, i.e., $x_{t+1} = \bar x_{t+1}$, then the coefficients are updated according to
\begin{align*}
    \alpha_{s,t+1}
    = \alpha_{s,t}
    - \gamma_t\,\mathds{1}_{\{s=s_t\}}
    +\gamma_t\,\mathds{1}_{\{s=v_t\}},
    \qquad \forall s\in\cA.
\end{align*}
If the trial point is rejected, i.e., $x_{t+1}=x_t$, then
\begin{align*}
    \alpha_{s,t+1}=\alpha_{s,t},
    \qquad \forall s\in\cA.
\end{align*}
In all cases, the next active set is recovered via $\cS_{t+1} = \{s \in \cA : \alpha_{s,t+1} > 0 \}$. We now verify that this choice fits our abstract framework and then derive a lower bound on the cardinality of $\cG\cap\cI_\eta$ under the following additional assumption.

\begin{assumption}[Finite dictionary]\label{ass:atom}
    The set of atoms $\cA$ is finite.
\end{assumption}

\begin{lemma}\label{lem:pairwise-sub}
    Let $\eta \in (1, 2)$. Under \cref{ass:X-compact,ass:smooth-functions,ass:atom}, the pairwise Frank-Wolfe subroutine in \cref{alg:pairwise-sub} satisfies Conditions~\ref{cond:sub}\,\ref{cond:d_t} and \ref{cond:cG}.
    More precisely, for any $ t \geq 0$, we have
    \begin{align}
        \label{eq:pfw:lb}
        |\cG\cap \cI_\eta \cap [t]|
        \geq
        \frac{t+1}{3|\cA|!+1}
        - \left\lfloor \log_\eta \left(\frac{L}{rL_0}\right) \right\rfloor.
    \end{align}
    If $f$ is additionally convex, then \cref{cond:sub}\,\ref{cond:convex} holds with $R=1$.
\end{lemma}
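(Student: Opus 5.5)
The plan is to follow the classical ``swap-step'' counting argument of Lacoste-Julien and Jaggi for pairwise Frank-Wolfe. The only change is to treat the finitely many iterations outside $\cI_\eta$ as an extra type of interruption, using \cref{lemma:index}.

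\textbf{Condition~\ref{cond:sub}\,\ref{cond:d_t}.} We keep the invariant that $\alpha_{\cdot,t}$ is a probability vector supported on $\cS_t$ with $x_t=\sum_s\alpha_{s,t}s$. Then
\[
x_t-\gamma d_t=x_t+\gamma(v_t-s_t)
\]
for $\gamma\in[0,\alpha_{s_t,t}]$ is obtained by moving mass $\gamma$ from $s_t$ to $v_t$, so it stays in $\conv(\cA)$. The invariant is preserved by the update. Moreover $\|d_t\|_2=\|s_t-v_t\|_2\le D_\cA$, since both points lie in $\cA$.

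\textbf{Convexity claim.} By the choice of $s_t$ we have $\grad f(x_t)^\top s_t\ge \grad f(x_t)^\top x_t$, because $x_t$ is a convex combination of elements of $\cS_t$. Hence
\[
\grad f(x_t)^\top d_t\ge \grad f(x_t)^\top(x_t-v_t),
\]
and \cref{lem:basic}\,\ref{lem:basic:fw-gap} gives $R=1$.

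\textbf{Classifying iterations.} Let $K:=\lfloor\log_\eta(L/(rL_0))\rfloor$. Every $t\in[t]$ falls into one of three classes.
\begin{enumerate}[label=(\alph*)]
\item $t\in\cG\cap\cI_\eta$.
\item $t\notin\cI_\eta$. By \cref{lemma:index} there are at most $K$ such iterations.
\item $t\in\cI_\eta\setminus\cG$, the ``bad'' steps.
\end{enumerate}
For class (c), $\gamma_t^{\max}=\alpha_{s_t,t}<1$ and $\gamma_t=\alpha_{s_t,t}>0$. Since $\eta<2$, \cref{lem:basic}\,\ref{lem:basic:I} gives a strict decrease $f(\bar x_{t+1})<f(x_t)$. (The gap is positive here: a zero gap would give $\gamma_t=0<\gamma_t^{\max}$, i.e.\ a good step.) So the trial point is accepted and $s_t$ is removed from the active set. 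A bad step is therefore one of two kinds:
\begin{enumerate}[label=(\roman*)]
\item a \emph{drop step}, if $v_t\in\cS_t$, which decreases $|\cS_t|$ by one;
\item a \emph{swap step}, if $v_t\notin\cS_t$, which keeps $|\cS_t|$ fixed and transfers the whole weight of $s_t$ to the new atom $v_t$.
\end{enumerate}

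\textbf{Counting drop steps.} Any iteration of class (a) or (b) increases $|\cS_t|$ by at most one, since at most the single atom $v_t$ is added. Also $|\cS_0|=1$. Hence the number of drop steps up to $t$ is at most $|\cG\cap\cI_\eta\cap[t]|+K$.

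\textbf{Counting swap steps (main obstacle).} Take a maximal run of consecutive swap steps. During the run the multiset of nonzero weights is unchanged, because each swap just relabels which atom carries a given weight. Hence every iterate in the run is determined by an injective assignment of this fixed multiset to atoms of $\cA$. There are at most $|\cA|!$ such assignments. Since $f$ strictly decreases along the run, no assignment repeats, so the run has length at most $|\cA|!$.

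Each run is terminated by a step of class (a), (b), or a drop step, or by the end of the horizon. So the number of runs is at most
\[
(\#\text{drops})+|\cG\cap\cI_\eta\cap[t]|+K+1.
\]
Combining with the drop bound gives
\[
t+1\le G+K+D+|\cA|!\,(D+G+K+1), \qquad D\le G+K,
\]
where $G=|\cG\cap\cI_\eta\cap[t]|$.

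\textbf{Arithmetic.} The remaining step is arithmetic, and this is where I expect some fiddling to land exactly on the constant $3|\cA|!+1$. Bounding crudely and using $|\cA|!\ge1$ to absorb the additive ``$+1$'' and the terms $2(G+K)$, I aim for
\[
t+1\le (3|\cA|!+1)(G+K).
\]
If the stray ``$+1$'' cannot be absorbed this way, I would use that the first iteration is always a good step, or that a run can be charged to the step that starts it. Rearranging yields
\[
G\ge \frac{t+1}{3|\cA|!+1}-K,
\]
which is \eqref{eq:pfw:lb}. In particular $\cG$ is infinite, which is Condition~\ref{cond:sub}\,\ref{cond:cG}.
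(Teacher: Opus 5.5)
Your argument is correct, but it is organized differently from the paper's.

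\textbf{How the two routes differ.} The paper works with the set $\cH$ of accepted iterations. It takes maximal runs of consecutive accepted non-good iterations, which mix your drop and swap steps. It bounds each run's length by $\sum_{c=2}^{|\cA|}|\cA|!/(|\cA|-c)!<3|\cA|!$, using that the active-set size is nonincreasing along a run and that no iterate repeats. It then charges each run to the accepted good iteration just before it, which gives $|\cH\cap[t]|\le(3|\cA|!+1)\,|\cH\cap\cG\cap[t]|$. Only afterwards does it transfer to $\cI_\eta$, via $\cI_\eta\cap\cG^c\subseteq\cH$ and $\cH\cap\cG\subseteq(\cG\cap\cI_\eta)\cup\cI_\eta^c$.

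You instead classify every iteration in $[t]$ directly. You dispose of drop steps with the active-set-size potential argument, which is the one the paper uses for the away-step lemma. You then apply the permutation count only to pure swap runs. There your observation that the multiset of weights is fixed makes the $|\cA|!$ bound transparent; the paper leaves that point implicit. Your route avoids $\cH$ and the sum over active-set sizes. Once the counting is done correctly (see below), it even gives the slightly better constant $2(|\cA|!+1)\le 3|\cA|!+1$.

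\textbf{Correction to the arithmetic.} The crude absorption you try first does not work. With $\#\text{runs}\le D+G+K+1$ and $D\le G+K$ you only get
\[
t+1\le 2(G+K)+|\cA|!\,(2(G+K)+1),
\]
which exceeds $(3|\cA|!+1)(G+K)$ when $G+K=1$.

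Your two fallbacks have to be used together. Since $\cS_0=\{x_0\}$ gives $\gamma_0^{\max}=\alpha_{x_0,0}=1$, iteration $0$ is good and hence not a swap step. So every maximal swap run is immediately preceded, inside $[t]$, by an iteration of class (a), class (b), or a drop step. Distinct runs have distinct predecessors. Thus $\#\text{runs}\le D+G+K$, and
\[
t+1\le (|\cA|!+1)(D+G+K)\le 2(|\cA|!+1)(G+K)\le(3|\cA|!+1)(G+K),
\]
which rearranges to \eqref{eq:pfw:lb}.
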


\begin{proof}[Proof of \cref{lem:pairwise-sub}]
    Under \cref{alg:pairwise-sub}, we have $d_t = s_t - v_t$ and $\gamma_t^{\max} = \alpha_{s_t,t}$. Since $s_t, v_t \in \cA$, we have $\|d_t\|_2=\|s_t-v_t\|_2\leq D_{\cA}$.
    Moreover, for any $\gamma \in [0,\gamma_t^{\max}] = [0,\alpha_{s_t,t}]$, writing $x_t = \sum_{s \in \cS_t} \alpha_{s,t}s$,
    we obtain $x_t - \gamma d_t = \sum_{s \in \cS_t \setminus \{s_t,v_t\}} \alpha_{s,t} s + (\alpha_{s_t,t}-\gamma) s_t +(\alpha_{v_t,t} + \gamma) v_t$, where we use the convention $\alpha_{v_t,t} = 0$ if $v_t \notin \cS_t$. All coefficients remain nonnegative and still sum to $1$, and therefore $x_t - \gamma d_t \in \conv(\cA) = \cX.$
    Hence, \cref{cond:sub}\,\ref{cond:d_t} holds.

    We next prove the lower bound in \eqref{eq:pfw:lb}, from which \cref{cond:sub}\,\ref{cond:cG} will follow. 
    Define the set
    \begin{align*}
        \cH := \{k \geq 0 : f(\bar x_{k+1})<f(x_k) \}.
    \end{align*}
    Let $\ell_0 < \ell_1 < \cdots < \ell_m$ be the elements of \(\cH\cap[t]\). We first aim to show
    \begin{equation}
        \label{eq:pairwise-count}
        |\cH\cap\cG\cap[t]|
        \geq
        \frac{|\cH\cap[t]|}{3|\cA|!+1}.
    \end{equation}
    Define $J_t:=\{ j \in [m] : \ell_j \notin \cG \}$.
    If $J_t = \emptyset$, then $|\cH \cap \cG \cap[t]| = |\cH \cap [t]|$, so \eqref{eq:pairwise-count} is immediate. Hence we may assume $J_t \neq \emptyset$.
    Write $J_t = \bigcup_{r \in [R_t]} \{a_r,a_r+1,\dots,b_r\}$ as union of disjoint integer intervals, 
    where $a_0\leq b_0<a_1\leq b_1<\cdots<a_{R_t}\leq b_{R_t}$.
    We claim
    \begin{align*}
        b_r - a_r + 1 \leq 3|\cA|!,
        \qquad r \in [R_t].
    \end{align*}
    Indeed, for every $j \in \{a_r,\dots,b_r\}$, since $\ell_j \notin \cG$, we have $\gamma_{\ell_j}^{\max} < 1$ and $\gamma_{\ell_j}=\gamma_{\ell_j}^{\max}=\alpha_{s_{\ell_j},\ell_j}$.
    Thus, we have $|\cS_{\ell_j}| \geq 2$, and the atom $s_{\ell_j}$ is removed from the active set at the next iterate. 
    If $v_{\ell_j} \in \cS_{\ell_j}$, then the active-set size decreases by $1$. 
    If $v_{\ell_j} \notin \cS_{\ell_j}$, then the active-set size stays the same as $\cS_{\ell_j+1} = (\cS_{\ell_j} \setminus \{s_{\ell_j}\}) \cup \{v_{\ell_j}\}.$
    Since \(\ell_j\in\cH\), we also have $f(x_{\ell_j+1}) < f(x_{\ell_j}),$ so the same iterate cannot reappear.

    Fix $c\in\{2,\dots,|\cA|\}$. Whenever $|\cS_{\ell_j}| = c$, the number of possible iterates is at most ${|\cA|!}/{(|\cA|-c)!}$.
    Therefore, we have
    \begin{align*}
        b_r - a_r + 1 \leq \sum_{b=2}^{|\cA|}\frac{|\cA|!}{(|\cA|-c)!} = |\cA|!\sum_{q=0}^{|\cA|-2}\frac{1}{q!} < e|\cA|! < 3|\cA|!,
    \end{align*}
    as desired.

    We next find a lower bound for $|\cH \cap \cG \cap [t]|$.
    Since $\cS_0 = \{x_0\}$, the first element of $\cH \cap [t]$ belongs to~$\cG$. Hence $a_0 \neq 0$, so $a_r-1 \in [m] \setminus J_t$ for every $r$, and these indices are distinct. Thus, $R_t + 1 \leq |[m] \setminus J_t|$.
    Consequently, we have $|J_t| = \sum_{r \in [R_t]} (b_r-a_r+1) \leq 3|\cA|!\,(R_t+1) \leq 3|\cA|!\,|[m] \setminus J_t|.$
    Since
    \begin{align*}
        |[m] \setminus J_t| = |\cH \cap \cG \cap [t]|
        \qquad\text{and}\qquad
        |J_t| + |[m] \setminus J_t| = m+1 = |\cH \cap [t]|,
    \end{align*}
    we conclude that \cref{eq:pairwise-count} holds.
    
    We now show that $\cI_\eta \cap \cG^c \cap [t] \subseteq \cH \cap \cG^c \cap [t]$. Let $k \in \cI_\eta \cap \cG^c \cap[t]$. Since $k \notin \cG$, we have $\gamma_k^{\max} < 1$ and $\gamma_k = \gamma_k^{\max}$.
    If $k \notin \cH$, then $f(\bar x_{k+1}) \geq f(x_k)=f(x_{k+1})$. By \cref{lem:basic}\,\ref{lem:basic:one-step}, we have
    \begin{align*}
        0
        \leq f(\bar x_{k+1})-f(x_k)
        \leq -\gamma_k \grad f(x_k)^\top d_k+\frac{L_{k+1}}{2}\gamma_k^2\|d_k\|_2^2.
    \end{align*}
    Moreover, 
    \begin{align*}
        \gamma_k 
        = \min\left\{
            \frac{\grad f(x_k)^\top d_k}{L_k\|d_k\|_2^2},
            \gamma_k^{\max}
        \right\}
        = \gamma_k^{\max} 
        \implies
        \gamma_kL_k\|d_k\|_2^2\leq \grad f(x_k)^\top d_k,
    \end{align*}
    and therefore $0 \leq f(\bar x_{k+1})-f(x_k) \leq (\tfrac{L_{k+1}}{2}-L_k) \gamma_k^2\|d_k\|_2^2.$
    Hence $L_{k+1}\geq 2L_k$, which contradicts $k\in\cI_\eta$, since $\eta<2$. Therefore $k\in\cH$.

    It thus follows that $|\cI_\eta \cap \cG^c \cap [t]| \leq |\cH \cap \cG^c \cap [t]|$.
    By \eqref{eq:pairwise-count}, we have
    \begin{align*}
        |\cH \cap \cG^c \cap [t]| = |\cH \cap [t]| - |\cH \cap \cG \cap [t]| \leq
        3|\cA|!\, |\cH \cap \cG \cap [t]|.
    \end{align*}
    Since $\cH \cap \cG \cap [t] \subseteq (\cG \cap \cI_\eta \cap [t]) \cup (\cI_\eta^c \cap [t]),$
    we obtain $|\cI_\eta \cap \cG^c \cap [t]| \leq 3|\cA|! \, (|\cG \cap \cI_\eta \cap [t]| + |\cI_\eta^c \cap [t]|)$.

    Using the first part of \cref{lemma:index},
    \begin{align*}
        |\cI_\eta \cap [t]|
        \geq t+1 -\left\lfloor \log_\eta \left(\frac{L}{rL_0}\right) \right\rfloor
        \qquad \text{and} \qquad
        |\cI_\eta^c \cap [t]|
        \leq \left\lfloor \log_\eta \left(\frac{L}{rL_0}\right) \right\rfloor.
    \end{align*}
    Since $|\cG\cap\cI_\eta\cap[t]| = |\cI_\eta\cap[t]| - |\cI_\eta\cap\cG^c\cap[t]|,$ we have
    \[
        |\cG\cap\cI_\eta\cap[t]|
        \geq
        t+1-\left\lfloor\log_\eta\left(\frac{L}{rL_0}\right)\right\rfloor
        -3|\cA|!\Bigl(|\cG\cap\cI_\eta\cap[t]|+\left\lfloor\log_\eta\left(\frac{L}{rL_0}\right)\right\rfloor\Bigr).
    \]
    Rearranging yields \eqref{eq:pfw:lb}. Since the right-hand side of \eqref{eq:pfw:lb} tends to $+\infty$ as $t \to \infty$, it follows that the set $\cG \cap \cI_\eta$ is infinite, and therefore $\cG$ is infinite as well. This proves \cref{cond:sub}\,\ref{cond:cG}.

    Finally, suppose that $f$ is convex. Since $x_t$ is a convex combination of atoms in $\cS_t$ and $s_t$ maximizes the linear functional $\grad f(x_t)^\top s$ over $\cS_t$, we have $\grad f(x_t)^\top(s_t-x_t)\geq 0$.
    Using \cref{lem:basic}\,\ref{lem:basic:fw-gap}, we obtain
    \begin{align*}
        \grad f(x_t)^\top d_t
        = \grad f(x_t)^\top(s_t-x_t) + \grad f(x_t)^\top(x_t-v_t)
        \geq \grad f(x_t)^\top(x_t-v_t)
        \geq f(x_t)-f(x^\star).
    \end{align*}
    Hence, \cref{cond:sub}\,\ref{cond:convex} holds with $R=1$.
\end{proof}

\subsection{Away-Step Frank-Wolfe Subroutine}
\label{sec:AFW-subroutine}

We now consider the away-step Frank-Wolfe subroutine given in \cref{alg:away-sub}, where the feasible set is defined as $\cX = \conv(\cA)$. 
The away-step variant can be implemented recursively by carrying the pair $(\cS_t, \alpha_{\cdot,t})$ from one iteration to the next. 
If the trial point is accepted, i.e., $x_{t+1}=\bar x_{t+1}$, then an accepted Frank-Wolfe step gives
\begin{align*}
    \alpha_{s,t+1}
    = (1-\gamma_t) \alpha_{s,t}
    +\gamma_t\,\mathds{1}_{\{s=v_t\}},
    \qquad \forall s\in \cA,
\end{align*}
whereas an accepted away step gives
\begin{align*}
    \alpha_{s,t+1}
    =
    (1+\gamma_t)\alpha_{s,t}
    -\gamma_t\,\mathds{1}_{\{s=s_t\}},
    \qquad \forall s\in \cA.
\end{align*}
If the trial point is rejected, i.e., $x_{t+1}=x_t$, then
\begin{align*}
    \alpha_{s,t+1}=\alpha_{s,t},
    \qquad \forall s\in \cA.
\end{align*}
In all cases, the next active set is recovered from $\cS_{t+1} = \{s\in \cA : \alpha_{s,t+1} > 0 \}$.
We now verify that this choice fits our abstract framework and then deduce a lower bound on the cardinality of $\cG \cap \cI_\eta$.

\begin{lemma}\label{lem:away-sub}
    Let $\eta\in(1,2)$. Under \cref{ass:X-compact,ass:smooth-functions}, the away-step Frank-Wolfe subroutine in \cref{alg:away-sub} satisfies \cref{cond:sub}\,\ref{cond:d_t} and \cref{cond:sub}\,\ref{cond:cG}. More precisely, for any $t\geq 0$,
    \begin{align}
        \label{eq:afw:lb}
        |\cG\cap \cI_\eta \cap [t]|
        \geq
        \frac{t+1}{2}
        -\left\lfloor\log_\eta\left(\frac{L}{rL_0}\right)\right\rfloor.
    \end{align}
    If $f$ is additionally convex, then \cref{cond:sub}\,\ref{cond:convex} holds with $R=1$.
\end{lemma}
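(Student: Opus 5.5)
The proof follows the template of \cref{lem:pairwise-sub}: verify feasibility and the norm bound directly, control the number of drop steps by a counting argument, and then combine that count with \cref{lemma:index}.

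\emph{Condition~\ref{cond:sub}\,\ref{cond:d_t}.} For a FW step, the argument is identical to \cref{lem:fw-sub}. For an away step, $d_t=s_t-x_t$ and both $s_t,x_t\in\conv(\cA)$, so $\|d_t\|_2\le D_{\cA}$. Writing
\[
x_t-\gamma d_t=(1+\gamma)x_t-\gamma s_t=\sum_{s\in\cS_t}\big((1+\gamma)\alpha_{s,t}-\gamma\mathds{1}_{\{s=s_t\}}\big)s,
\]
the coefficients sum to one. They are nonnegative exactly when $(1+\gamma)\alpha_{s_t,t}\ge\gamma$, that is, when $\gamma\le \alpha_{s_t,t}/(1-\alpha_{s_t,t})=\gamma_t^{\max}$. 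If $\alpha_{s_t,t}=1$, then $\cS_t=\{s_t\}$, $x_t=s_t$ and $d_t=0$, which is harmless.

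\emph{Counting bad iterations.} An iteration $t\notin\cG$ must be an away step with $\gamma_t=\gamma_t^{\max}<1$, a ``drop step''. I claim every $k\in\cI_\eta\cap\cG^c$ is accepted, i.e.\ $k\in\cH:=\{k: f(\bar x_{k+1})<f(x_k)\}$. The argument is verbatim from \cref{lem:pairwise-sub}. Since $\gamma_k=\gamma_k^{\max}$, we have $\gamma_kL_k\|d_k\|^2\le\grad f(x_k)^\top d_k$. Combined with \cref{lem:basic}\,\ref{lem:basic:one-step}, rejection would force $L_{k+1}\ge 2L_k>\eta L_k$, which is impossible. Hence every such $k$ is an accepted drop step, and it shrinks $|\cS_{k}|$ by exactly one, because $s_k$ is removed and nothing is added. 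Rejected iterations leave $\cS$ unchanged.

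Accepted FW steps, in particular those in $\cG$, add at most one atom. Away steps with $\gamma_t<\gamma_t^{\max}$ keep the size unchanged, and there are no other ways to change $|\cS|$. Since $|\cS_0|=1$ and $|\cS|\ge1$ always, a potential argument gives
\[
\#\{\text{accepted drop steps in }[t]\}\le \#\{\text{accepted FW steps in }[t]\}\le|\cG\cap[t]|.
\]
The first inequality holds because each drop decreases the size and only FW steps increase it, and the second because FW steps have $\gamma^{\max}=1$ and so lie in $\cG$.

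\emph{Assembling the bound.} From the two facts above, $|\cI_\eta\cap\cG^c\cap[t]|\le|\cG\cap[t]|\le|\cG\cap\cI_\eta\cap[t]|+|\cI_\eta^c\cap[t]|$. Writing $K=\lfloor\log_\eta(L/(rL_0))\rfloor$ and using \cref{lemma:index},
\[
t+1-K\le|\cI_\eta\cap[t]|=|\cG\cap\cI_\eta\cap[t]|+|\cI_\eta\cap\cG^c\cap[t]|\le 2|\cG\cap\cI_\eta\cap[t]|+|\cI_\eta^c\cap[t]|.
\]
The same lemma gives $|\cI_\eta^c\cap[t]|\le K$. Rearranging yields $|\cG\cap\cI_\eta\cap[t]|\ge (t+1)/2-K$, which is \eqref{eq:afw:lb}. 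The right-hand side diverges as $t\to\infty$, so $\cG$ is infinite and Condition~\ref{cond:sub}\,\ref{cond:cG} holds.

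\emph{Convex case.} By the selection rule, $\grad f(x_t)^\top d_t\ge\grad f(x_t)^\top(x_t-v_t)$ holds for both step types. For a FW step this is an equality. For an away step, the rule chooses the away direction precisely when $\grad f(x_t)^\top(s_t-x_t)$ exceeds $\grad f(x_t)^\top(x_t-v_t)$. \cref{lem:basic}\,\ref{lem:basic:fw-gap} then gives $R=1$.

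The main obstacle is the drop-step counting. It needs care about how accepted and rejected steps change $|\cS|$, and in particular it must confirm that FW steps never lie outside $\cG$ and that non-drop away steps do not reduce the active set.
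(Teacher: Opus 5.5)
Your proof is correct and follows essentially the same route as the paper. Both arguments show that every $k\in\cG^c\cap\cI_\eta$ must be an accepted drop step, then run an active-set-size potential argument and combine it with \cref{lemma:index}; the only difference is bookkeeping (the paper gets $|\cG^c\cap\cI_\eta\cap[t]|\le (t+1)/2$ from disjointness of size-increasing and size-decreasing iterations, whereas you bound it by $|\cG\cap[t]|$ and split $\cG$ along $\cI_\eta$), and your inequality form of the potential argument is slightly more careful than the paper's identity $|\cS_{t+1}|=1+A_t-D_t$, which overlooks that an accepted FW step with $\gamma_t=1$ can remove several atoms at once.
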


\begin{proof}[Proof of \cref{lem:away-sub}]
    At every iteration, the away-step subroutine chooses either the Frank-Wolfe direction $(d_t, \gamma_t^{\max}) = (x_t-v_t, 1)$ or the away direction $(d_t, \gamma_t^{\max}) = (s_t - x_t, {\alpha_{s_t,t}}/{(1-\alpha_{s_t,t}}))$.
    If the Frank-Wolfe direction is chosen, then $\|d_t\|_2=\|x_t-v_t\|_2\leq D_{\cA}$, since $x_t, v_t \in \conv(\cA)$. Moreover, for every $\gamma \in [0,1]$, $x_t - \gamma d_t = (1-\gamma)x_t + \gamma v_t \in \conv(\cA) = \cX$.
    If the away direction is chosen, then $\|d_t\|_2=\|s_t-x_t\|_2\leq D_{\cA}$, since again $s_t, x_t \in \conv(\cA)$. Writing $x_t = \sum_{s \in \cS_t} \alpha_{s,t} s$, we have, for every $\gamma\in[0,\gamma_t^{\max}]$, $x_t -\gamma d_t = (1 + \gamma) x_t - \gamma s_t = \sum_{s\in\cS_t\setminus\{s_t\}}(1+\gamma) \alpha_{s,t} s + ((1+\gamma) \alpha_{s_t,t} - \gamma) s_t$.
    All coefficients are nonnegative, because
    \begin{align*}
        (1+\gamma)\alpha_{s_t,t}-\gamma \geq 0
        \qquad \Longleftrightarrow \qquad 
        \gamma \leq \frac{\alpha_{s_t,t}}{1-\alpha_{s_t,t}}=\gamma_t^{\max},
    \end{align*}
    and they sum to $(1+\gamma)\sum_{s\in\cS_t}\alpha_{s,t} -\gamma = (1+\gamma) - \gamma = 1.$ Hence, $x_t-\gamma d_t\in\conv(\cA)=\cX$. This proves \cref{cond:sub}\,\ref{cond:d_t}.

    We next prove the lower bound in \eqref{eq:afw:lb}, from which \cref{cond:sub}\,\ref{cond:cG} will follow. 
    We first claim that every index in $\cG^c\cap \cI_\eta$ corresponds to an accepted away step that removes one atom from the active set. Let $k \in \cG^c\cap \cI_\eta$. Since $k \notin \cG$, by definition we have $\gamma_k^{\max} < 1$ and $\gamma_k = \gamma_k^{\max}.$
    In particular, the Frank-Wolfe direction cannot have been chosen, because in that case $\gamma_k^{\max}=1$. Hence, the away direction is chosen at iteration $k$, so $d_k=s_k-x_k$ and $\gamma_k^{\max}=\frac{\alpha_{s_k,k}}{1-\alpha_{s_k,k}}<1.$

    Suppose, for contradiction, that the trial point is rejected, i.e., $x_{k+1}=x_k$. Then $f(\bar x_{k+1}) \geq f(x_k)=f(x_{k+1})$. By \cref{lem:basic}\,\ref{lem:basic:one-step}, we have
    \begin{align*}
        0
        \leq f(\bar x_{k+1})-f(x_k)
        \leq -\gamma_k \grad f(x_k)^\top d_k+\frac{L_{k+1}}{2}\gamma_k^2\|d_k\|_2^2.
    \end{align*}
    Moreover, 
    \begin{align*}
        \gamma_k 
        = \min\left\{
            \frac{\grad f(x_k)^\top d_k}{L_k\|d_k\|_2^2},
            \gamma_k^{\max}
        \right\}
        = \gamma_k^{\max} 
        \implies
        \gamma_kL_k\|d_k\|_2^2\leq \grad f(x_k)^\top d_k,
    \end{align*}
    and therefore $0 \leq f(\bar x_{k+1})-f(x_k) \leq (\tfrac{L_{k+1}}{2}-L_k) \gamma_k^2\|d_k\|_2^2.$
    Hence $L_{k+1}\geq 2L_k$, which contradicts $k\in\cI_\eta$, since $\eta<2$. Thus, the step at iteration $k$ must be accepted.
    Since it is an accepted away step with $\gamma_k = \gamma_k^{\max}$, the atom $s_k$ is removed from the active set, so the active-set size decreases by one. This proves the claim.
    
    Next, let $A_t := |\{ k \in [t]: |\cS_{k+1}| > |\cS_k| \}|$ be the number of iterations up to time $t$ at which the active-set size increases and $D_t := |\{k \in [t] : |\cS_{k+1}| < |\cS_k|\}|$ be the number of iterations up to time $t$ at which the active-set size decreases.
    Since $|\cS_0|=1$ and the active set is always nonempty, we obtain $1 \leq |\cS_{t+1}| = |\cS_0| + A_t - D_t = 1 + A_t - D_t$. Furthermore, as shown above $D_t \geq |\cG^c \cap \cI_\eta \cap [t]|$, so $1 \leq 1 + A_t - |\cG^c \cap \cI_\eta \cap [t]|$ hence $|\cG^c \cap \cI_\eta \cap [t]| \leq A_t$. Since $A_t + |\cG^c \cap \cI_\eta \cap [t]| \leq t+1$, we thus have
    \[ |\cG^c \cap \cI_\eta \cap [t]| \leq \frac{t+1}{2}. \]
    On the other hand, by \cref{lemma:index},
    \begin{align*}
        |\cI_\eta^c\cap [t]|
        \leq C:= \left\lfloor\log_\eta\left(\frac{L}{rL_0}\right)\right\rfloor
        \implies 
        |(\cG\cap \cI_\eta)^c\cap [t]|
        \leq
        |\cG^c\cap \cI_\eta\cap [t]|+|\cI_\eta^c\cap [t]|
        \leq
        \frac{t+1}{2}+C.
    \end{align*}
    Using $|\cG\cap \cI_\eta\cap [t]| = t+1-|(\cG\cap \cI_\eta)^c\cap [t]|$, we arrive at~\eqref{eq:afw:lb}. Since the right-hand side tends to $+\infty$ as $t \to \infty$, it follows that the set $\cG \cap \cI_\eta \cap [t]$ is infinite, and therefore $\cG$ is infinite as well. This proves \cref{cond:sub}\,\ref{cond:cG}.
    
    Finally, suppose that $f$ is convex. If the away direction is chosen, then by the selection rule and \cref{lem:basic}\,\ref{lem:basic:fw-gap}, we have $\grad f(x_t)^\top(s_t-x_t)  > \grad f(x_t)^\top(x_t-v_t)  \geq f(x_t)-f(x^\star)$.
    Similarly, if the Frank-Wolfe step is chosen, we have $\grad f(x_t)^\top(x_t-v_t) \geq f(x_t)-f(x^\star)$ thanks to \cref{lem:basic}\,\ref{lem:basic:fw-gap}.
    Therefore, \cref{cond:sub}\,\ref{cond:convex} holds with $R=1$.
\end{proof}

\section{General Convergence Guarantees}
\label{sec:convergence}

This section develops the convergence theory that is common to all admissible subroutines in our framework. More precisely, throughout this section we only assume \cref{ass:X-compact,ass:smooth-functions} together with Conditions~\ref{cond:r_t} and~\ref{cond:sub}, and we do not impose any additional structure. We first derive a generic stationarity guarantee that applies in the nonconvex setting, and then show how the same framework yields objective-value convergence under convexity. In this sense, the results of this section form the baseline convergence theory for the auto-conditioned scheme introduced in \cref{sec:framework}.

\subsection{Nonconvex settings}
In the nonconvex setting, the quantity $\grad f(x_t)^\top d_t$ is a natural stationarity measure for our framework. Indeed, when $\cX=\conv(\cA)$, a first-order stationary point $x\in\cX$ satisfies
\begin{align*}
    \max_{v\in\cA} \grad f(x)^\top(x-v)=0.
\end{align*}
This is exactly the first-order optimality condition when the problem is convex. For the closed-loop Frank-Wolfe, away-step, and pairwise subroutines, one always has
\begin{align*}
    \grad f(x_t)^\top d_t
    \geq
    \grad f(x_t)^\top(x_t-v_t)
    =
    \max_{v\in\cA}\grad f(x_t)^\top(x_t-v).
\end{align*}
Thus, controlling $\grad f(x_t)^\top d_t$ controls a valid first-order stationarity measure over $\conv(\cA)$.

When $\cX=\lin(\cA)$, the corresponding first-order condition is
\begin{align*}
    \max_{v\in\cA}\grad f(x)^\top v = 0.
\end{align*}
Under the Matching Pursuit subroutine, we have $d_t=-v_t$, and thus, $\grad f(x_t)^\top d_t = -\min_{v\in\cA}\grad f(x_t)^\top v$.
Under the symmetry assumption $\cA=-\cA$, we have
\begin{align*}
    \grad f(x_t)^\top d_t
    =
    \max_{v\in\cA}\grad f(x_t)^\top v.
\end{align*}
Therefore, in both regimes, the quantity $\grad f(x_t)^\top d_t$ provides a meaningful measure of first-order nonstationarity.

\begin{theorem} \label{thm:nonconvex}
    Suppose $\{x_t\}_{t \geq 0}$ is the sequence generated by \cref{alg:ac-fw}, and that \cref{ass:X-compact,ass:smooth-functions,cond:r_t,cond:sub} all hold. Let $\eta\in(1,2)$. For any $t \geq 0$ such that $|\cG \cap \cI_\eta \cap [t] | \geq 1$, there exists $k \in \cG \cap \cI_\eta \cap [t]$ such that
    \begin{align*}
        \grad f(x_{k})^\top d_k
        \leq
        \begin{cases}
            \sqrt{\dfrac{2LD_{\cA}^2\bigl(f(x_0)-f(x^\star)\bigr)}{(2-\eta) \cdot |\cG \cap \cI_\eta \cap [t] | }},
            & \grad f(x_k)^\top d_k \leq LD_{\cA}^2,\\[2ex]
            \dfrac{2\bigl(f(x_0)-f(x^\star)\bigr)}{(2-\eta) \cdot |\cG \cap \cI_\eta \cap [t] |},
            & \grad f(x_k)^\top d_k > LD_{\cA}^2.
        \end{cases}
    \end{align*}
\end{theorem}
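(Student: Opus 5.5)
The plan is a telescoping argument over the good significant-descent iterations, combined with a per-iteration decrease bound taken from \cref{lem:basic}. Write $N := |\cG\cap\cI_\eta\cap[t]|\geq 1$ and $g_k := \grad f(x_k)^\top d_k$.

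\textbf{Step 1 (monotonicity and telescoping).} The acceptance rule in \cref{alg:ac-fw} makes $f(x_{k+1})\leq f(x_k)$ for every $k$. Hence every term $f(x_k)-f(x_{k+1})$ is nonnegative, and
\[
f(x_0)-f(x^\star)\;\geq\; f(x_0)-f(x_{t+1})\;=\;\sum_{k\in[t]}\bigl(f(x_k)-f(x_{k+1})\bigr)\;\geq\;\sum_{k\in\cG\cap\cI_\eta\cap[t]}\bigl(f(x_k)-f(x_{k+1})\bigr).
\]
Here $f(x^\star)\leq f(x_{t+1})$ because every iterate is feasible by \cref{cond:sub}\,\ref{cond:d_t}.

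\textbf{Step 2 (per-iteration decrease).} For each $k\in\cG\cap\cI_\eta$, apply \cref{lem:basic}\,\ref{lem:basic:GI}. Then bound $L_k\leq L$ using \cref{lemma:Lipschitz} and $\|d_k\|_2\leq D_{\cA}$ using \cref{cond:sub}\,\ref{cond:d_t}. This gives
\[
f(x_k)-f(x_{k+1})\;\geq\;\Bigl(1-\tfrac{\eta}{2}\Bigr)\min\Bigl\{1,\tfrac{g_k}{LD_{\cA}^2}\Bigr\}g_k .
\]
The right-hand side is nondecreasing in $g_k\geq0$. We have $g_k\geq 0$ because $\gamma_k\geq0$ is well defined. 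If $L_k=0$ or $d_k=0$, the bound holds directly, and the degenerate case is handled separately.

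\textbf{Step 3 (pick the minimizer).} Choose $k\in\cG\cap\cI_\eta\cap[t]$ minimizing $g_k$, and set $\phi(g):=\min\{1,g/(LD_{\cA}^2)\}\,g$. Since $\phi$ is monotone, every term in the sum is at least $(1-\eta/2)\phi(g_k)$. Summing gives
\[
N\,\tfrac{2-\eta}{2}\,\phi(g_k)\;\leq\; f(x_0)-f(x^\star).
\]
Now split into the two cases of the statement.

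If $g_k\leq LD_{\cA}^2$, then $\phi(g_k)=g_k^2/(LD_{\cA}^2)$. Solving for $g_k$ gives the square-root bound.

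If $g_k>LD_{\cA}^2$, then $\phi(g_k)=g_k$, which gives the linear bound.

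\textbf{Expected obstacle.} The only delicate point is the degenerate case where $L_k=0$ or $d_k=0$, so that the stepsize formula is ill-defined. I would dispatch it either by assuming $L_0>0$ (the quantities $L_k\geq r_kL_{k-1}$ then stay positive by induction), or by noting that $d_k=0$ forces $g_k=0$ and the bound is trivial. The rest is routine.
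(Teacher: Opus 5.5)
Your proposal is correct and follows the paper's proof essentially step for step. You telescope the nonincreasing objective values, lower-bound each term with $k\in\cG\cap\cI_\eta\cap[t]$ via \cref{lem:basic}\,\ref{lem:basic:GI} together with $L_k\le L$ and $\|d_k\|_2\le D_{\cA}$, and then split into the two cases; the only cosmetic difference is that the paper extracts $k$ by averaging (some term is at most the mean, which needs no monotonicity of $\phi$), whereas you take the minimizer of $g_k$, which gives the same bound.
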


\begin{proof}[Proof of \cref{thm:nonconvex}]
    For every $k \in \cG \cap \cI_\eta \cap [t]$, \cref{lem:basic}\,\ref{lem:basic:GI} gives
    \begin{align*}
        f(x_{k+1})
        \leq
        f(x_k)-\left(1-\frac{\eta}{2}\right)
        \min\left\{
            1,
            \frac{\grad f(x_k)^\top d_k}{L_k\|d_k\|_2^2}
        \right\}\grad f(x_k)^\top d_k.
    \end{align*}
    Summing this inequality over all $k \in \cG \cap \cI_\eta \cap [t]$, we obtain
    \begin{align*}
        \left(1-\frac{\eta}{2}\right)
        \sum_{k \in \cG \cap \cI_\eta \cap [t]}
        \min\left\{
            1,
            \frac{\grad f(x_k)^\top d_k}{L_k\|d_k\|_2^2}
        \right\}\grad f(x_k)^\top d_k
        &\leq \sum_{k \in \cG \cap \cI_\eta \cap [t]} \bigl( f(x_k)-f(x_{k+1}) \bigr) \\
        &\leq
        \sum_{k \in [t]} \bigl( f(x_k) - f(x_{k+1}) \bigr) \\
        &=
        f(x_0) - f(x_{t+1}) \\
        &\leq
        f(x_0) - f(x^\star).
    \end{align*}
    Using \cref{lemma:Lipschitz} and \cref{cond:sub}\,\ref{cond:d_t}, we deduce that
    \begin{align*}
        \left(1-\frac{\eta}{2}\right)
        \sum_{k \in \cG \cap \cI_\eta \cap [t]}
        \min\left\{
            1,
            \frac{\grad f(x_k)^\top d_k}{LD_{\cA}^2}
        \right\}\grad f(x_k)^\top d_k
        \leq
        f(x_0)-f(x^\star).
    \end{align*}
    Hence there exists $k \in \cG \cap \cI_\eta \cap [t]$ such that
    \begin{align*}
        \min\left\{
            1,
            \frac{\grad f(x_k)^\top d_k}{LD_{\cA}^2}
        \right\}\grad f(x_k)^\top d_k
        \leq
        \frac{2\bigl(f(x_0)-f(x^\star)\bigr)}{(2-\eta) \cdot |\cG \cap \cI_\eta \cap [t]|}.
    \end{align*}
    This yields the desired result, concluding the proof.
\end{proof}

For the subroutines studied in \cref{alg:fw-sub,alg:mp-sub,alg:pairwise-sub,alg:away-sub}, the lower bounds \cref{lem:fw-sub,lem:away-sub,lem:mp-sub,lem:pairwise-sub} imply that $|\cG \cap \cI_\eta \cap[t]|$ is of order $t$. As a simple consequence of this and \cref{thm:nonconvex}, we obtain the following guarantee.

\begin{corollary} \label{cor:nonconvex}
    Suppose $\{x_t\}_{t \geq 0}$ is the sequence generated by \cref{alg:ac-fw}, and that \cref{ass:X-compact,ass:smooth-functions,cond:r_t,cond:sub} all hold. Let $\eta \in (1,2)$, and let $h_0 := \min\{ t \geq 0 : t \in \cG \cap \cI_\eta \}$ denote the first iteration that achieves significant descent. Then for every $t \geq h_0$,
    \begin{align*}
        \min_{k \in [t]} \grad f(x_k)^\top d_k \leq O(1/\sqrt{t}).
    \end{align*}
\end{corollary}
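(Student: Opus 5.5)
The plan is to combine \cref{thm:nonconvex} with the subroutine-specific counting bounds \cref{eq:fw:lb}, \cref{eq:pfw:lb} and \cref{eq:afw:lb}, which all give a linear lower bound on $N_t := |\cG\cap\cI_\eta\cap[t]|$.

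\textbf{Step 1: a uniform counting bound.} Set $C := \lfloor \log_\eta(L/(rL_0))\rfloor$. Each of \cref{lem:fw-sub,lem:mp-sub,lem:pairwise-sub,lem:away-sub} gives
\[
N_t \ge c\,(t+1) - C,
\]
where $c=1$ for CFW and MP, $c=1/2$ for AFW, and $c = 1/(3|\cA|!+1)$ for PFW. Hence for $t \ge T_0 := \lceil 2(C+1)/c\rceil$ we have $N_t \ge c(t+1)/2$.

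\textbf{Step 2: apply \cref{thm:nonconvex}.} For such $t$, \cref{thm:nonconvex} produces $k\in\cG\cap\cI_\eta\cap[t]$ with
\[
\grad f(x_k)^\top d_k \le \max\left\{\sqrt{\frac{4LD_{\cA}^2\bigl(f(x_0)-f(x^\star)\bigr)}{(2-\eta)\,c\,(t+1)}},\ \frac{4\bigl(f(x_0)-f(x^\star)\bigr)}{(2-\eta)\,c\,(t+1)}\right\}.
\]
The right-hand side is $O(1/\sqrt t)$, because the second branch is $O(1/t)\le O(1/\sqrt t)$. Since $\min_{k'\in[t]}\grad f(x_{k'})^\top d_{k'} \le \grad f(x_k)^\top d_k$, the claim holds for all $t\ge T_0$.

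\textbf{Step 3: the finite window $h_0 \le t < T_0$.} Here $N_t\ge1$ by the definition of $h_0$, so \cref{thm:nonconvex} applies with $N_t\ge 1$. This gives a bound by a constant $K$, namely $K = \max\{\sqrt{2LD_{\cA}^2(f(x_0)-f(x^\star))/(2-\eta)},\,2(f(x_0)-f(x^\star))/(2-\eta)\}$. On this finite range, $K \le K\sqrt{T_0}/\sqrt t$, so the bound is again $O(1/\sqrt t)$ once $K\sqrt{T_0}$ is absorbed into the constant.

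The main obstacle is the bookkeeping of constants rather than any new inequality. For CFW and MP, \cref{eq:fw:lb} gives $N_t\ge t+1-C$, so one can take $T_0$ of order $C$. For PFW the constant $3|\cA|!+1$ enters, and it requires \cref{ass:atom}. The $O(\cdot)$ hides $L$, $D_{\cA}$, $f(x_0)-f(x^\star)$, $\eta$, $C$ (through $L_0$ and $r$) and $c$. The assumption $L_0>0$, needed for \cref{lemma:index}, is implicit in the use of these lemmas.
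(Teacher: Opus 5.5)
Your proposal is correct and takes the route the paper intends: the paper gives no separate proof, only noting that the subroutine lemmas make $|\cG\cap\cI_\eta\cap[t]|$ grow linearly in $t$, so the corollary follows from \cref{thm:nonconvex}. Your explicit threshold $T_0$, your constant bound on the finite window $h_0\le t<T_0$, and your remark that the linear count is a property of the four concrete subroutines (not of \cref{cond:sub} alone) fill in the details behind that one-line argument.
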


\subsection{Convex settings}\label{sec:convex-convergence}

Throughout this subsection, we impose the following convexity assumption.

\begin{assumption}[Convex objective]
\label{ass:convex-functions}
   The objective function $f: \R^n \to \R$ is convex.
\end{assumption}

Under \cref{ass:convex-functions}, the quantity $\grad f(x_t)^\top d_t$ serves not only as a stationarity measure, but also as an upper bound on the objective error through \cref{cond:sub}\,\ref{cond:convex}. 
In this regime, it is convenient to first establish convergence estimates along the controlled subsequence indexed by $\cG \cap \cI_\eta$, and then translate these estimates to the full sequence using lower bounds on the cardinality of $\cG \cap \cI_\eta \cap[t]$. 
Accordingly, let $\{h_t\}_{t \geq 0}$ denote the elements of $\cG \cap \cI_\eta$ in increasing order.

\begin{lemma}
\label{lem:conv:translate}
    For every $t \geq 0$ and every integer $q \geq 0$, if $|\cG \cap \cI_\eta \cap[t]| \geq q+1$,
    then $h_q \leq t$.
    Consequently,
    \begin{align*}
        f(x_t) - f(x^\star) \leq f(x_{h_q}) - f(x^\star).
    \end{align*}
\end{lemma}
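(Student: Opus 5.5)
The plan has two short steps: a counting argument for $h_q\le t$, and a monotonicity argument for the function values.

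For the first claim, I will use that $\{h_t\}_{t\geq 0}$ lists the elements of $\cG\cap\cI_\eta$ in increasing order. Then $\cG\cap\cI_\eta\cap[t]$ consists exactly of those $h_j$ with $h_j\le t$. Since the sequence is strictly increasing, this set equals $\{h_0,\dots,h_{p}\}$ for some $p$. Its cardinality is $p+1$.

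The hypothesis $|\cG\cap\cI_\eta\cap[t]|\ge q+1$ then gives $p\ge q$. Hence $h_q\le h_p\le t$. Equivalently, if $h_q>t$ held, then every $h_j$ with $h_j\le t$ would have $j<q$. That would leave at most $q$ elements, a contradiction.

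For the second claim, I will show that $\{f(x_k)\}_{k\ge0}$ is nonincreasing. This is immediate from the acceptance rule in \cref{alg:ac-fw}. Either $x_{k+1}=\bar x_{k+1}$ with $f(\bar x_{k+1})<f(x_k)$, or $x_{k+1}=x_k$; in both cases $f(x_{k+1})\le f(x_k)$. Iterating from $h_q$ to $t$, which is allowed because $h_q\le t$, gives $f(x_t)\le f(x_{h_q})$. Subtracting $f(x^\star)$ from both sides yields the stated bound.

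I do not expect a real obstacle here. The only point needing care is the indexing convention: $[t]=\{0,\dots,t\}$ and $h_0$ is the first element, so that "at least $q+1$ elements" corresponds exactly to $h_q\le t$.
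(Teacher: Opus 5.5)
Your proposal is correct and matches the paper's proof: $h_q$ is the $(q+1)$-st smallest element of $\cG\cap\cI_\eta$, so having at least $q+1$ such elements in $[t]$ forces $h_q\le t$, and monotonicity of $\{f(x_k)\}$ then gives $f(x_t)\le f(x_{h_q})$. The one addition is that you derive monotonicity explicitly from the acceptance rule in \cref{alg:ac-fw}, which the paper simply invokes.
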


\begin{proof}[Proof of \cref{lem:conv:translate}]
    By definition, $h_q$ is the $(q+1)$-st smallest element of the set $\cG\cap\cI_\eta$. If $|\cG \cap \cI_\eta \cap [t]| \geq q+1,$ then at least $q+1$ elements of $\cG\cap\cI_\eta$ lie in $[t]$, and therefore $h_q \leq t$.
    Since the sequence $\{f(x_t)\}_{t \geq 0}$ is nonincreasing, it follows that $f(x_t) \leq f(x_{h_q})$, and hence $f(x_t) - f(x^\star) \leq f(x_{h_q}) - f(x^\star)$.
\end{proof}
The next theorem shows that this yields the classical sublinear $\cO(1/t)$ convergence rate for $\{x_{h_t}\}_{t\geq 0}$, without requiring prior knowledge of the global Lipschitz constant. 

\begin{theorem}
\label{thm:convex}
    Suppose $\{x_t\}_{t \geq 0}$ is the sequence generated by \cref{alg:ac-fw}, let $\eta \in (1,2)$, and assume \cref{ass:X-compact,ass:convex-functions,ass:smooth-functions} together with Conditions~\ref{cond:r_t} and \ref{cond:sub}. Then, for any $t \geq 0$, it holds that
    \begin{align*}
        f(x_{h_t})-f(x^\star)
        \leq
        \frac{\max\left\{\left(\frac{2R}{2-\eta}-1\right)\bigl(f(x_0)-f(x^\star)\bigr),\frac{2LR^2D_{\cA}^2}{2-\eta}\right\}}{t+\frac{2R}{2-\eta}-1},
    \end{align*}
    where $\{h_t\}_{t \geq 0}$ are the indices in $\cG \cap \cI_\eta$ in increasing order.
\end{theorem}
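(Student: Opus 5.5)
The plan is to derive a one-step recursion for the gaps $\Delta_t := f(x_{h_t}) - f(x^\star)$ along the subsequence $\{h_t\}$, and then solve that recursion by induction.

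\emph{Step 1: a per-step decrease.} Fix $k = h_t \in \cG \cap \cI_\eta$. By \cref{lem:basic}\,\ref{lem:basic:GI}, \cref{lemma:Lipschitz} and \cref{cond:sub}\,\ref{cond:d_t} (so that $L_k\|d_k\|_2^2 \le LD_{\cA}^2$), we get
\begin{align*}
f(x_{k+1}) \le f(x_k) - \Bigl(1-\tfrac{\eta}{2}\Bigr)\min\Bigl\{1, \tfrac{g_k}{LD_{\cA}^2}\Bigr\} g_k,
\end{align*}
where $g_k := \grad f(x_k)^\top d_k$. By \cref{cond:sub}\,\ref{cond:convex}, $g_k \ge \Delta_t / R$. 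Since the map $g \mapsto \min\{1, g/(LD_{\cA}^2)\}\, g$ is nondecreasing, we may replace $g_k$ by $\Delta_t/R$. The iterates $f(x_s)$ are nonincreasing and $h_{t+1} \ge k+1$, so $f(x_{h_{t+1}}) \le f(x_{k+1})$. Writing $c := (2-\eta)/2$, this gives
\begin{align*}
\Delta_{t+1} \le \Delta_t - c\,\min\Bigl\{\tfrac{\Delta_t}{R}, \tfrac{\Delta_t^2}{R^2 L D_{\cA}^2}\Bigr\}.
\end{align*}

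\emph{Step 2: induction.} Set $a := \tfrac{2R}{2-\eta} - 1 = \tfrac{R}{c} - 1 \ge 0$; this is where $R \ge 1$ and $c \le 1/2$ are used. Set $M := \max\{a\Delta_0, \tfrac{2LR^2D_{\cA}^2}{2-\eta}\}$. The claim to prove is $\Delta_t \le M/(t+a)$.

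The base case $t=0$ needs care when $a=0$, i.e.\ $R = c$. Since $R \ge 1 > 1/2 \ge c$, we in fact have $a > 0$, so the base case reads $\Delta_0 \le M/a$, which holds by the choice of $M$.

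For the inductive step, split into two regimes.
\begin{itemize}
\item[(i)] If the minimum is the linear term, then $\Delta_{t+1} \le (1 - c/R)\Delta_t$. Since $1 - c/R = a/(a+1)$ and $\tfrac{a}{a+1} \le \tfrac{t+a}{t+a+1}$ for $t \ge 0$, we get $\Delta_{t+1} \le (1-c/R)\tfrac{M}{t+a} \le \tfrac{M}{t+a+1}$.
\item[(ii)] If the minimum is the quadratic term, then $\Delta_{t+1} \le \Delta_t - \Delta_t^2/M'$ with $M' := R^2LD_{\cA}^2/c \le M$. The map $x \mapsto x - x^2/M'$ is increasing on $[0, M'/2]$. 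In this regime the quadratic term is the smaller one, which gives $\Delta_t \le RLD_{\cA}^2 \le M'/2$, using $c \le 1/2$ and $R \ge 1$. So we can plug in the bound $M/(t+a)$, provided $M/(t+a) \le M'/2$ as well; otherwise we use $\Delta_t$ itself together with a direct bound. The standard estimate $\tfrac{M}{s} - \tfrac{M^2}{M's^2} \le \tfrac{M}{s} - \tfrac{M}{s^2} \le \tfrac{M}{s+1}$ (using $M \ge M'$) then closes the step.
\end{itemize}

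The main obstacle is the monotonicity technicality in case (ii), specifically when $M/(t+a) > M'/2$. I would handle it by noting that $x - x^2/M' \le M'/4$ always holds. It then suffices to check $M'/4 \le M/(t+a+1)$, which may fail for large $t$. If it does, I would instead use the standard argument on reciprocals: $1/\Delta_{t+1} \ge 1/\Delta_t + 1/M'$, together with $1/\Delta_{t+1} \ge (1 + c/R)/\Delta_t$ in the linear case. Then $1/\Delta_t$ grows by at least $\min\{1/M', (c/R)/\Delta_t\}$ per step. Combining this with the base value $1/\Delta_0 \ge a/M$ should give $1/\Delta_t \ge (t+a)/M$ cleanly. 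I would adopt this reciprocal formulation as the primary route, checking in the linear case that $(c/R)/\Delta_t \ge 1/M$ whenever $\Delta_t \le M/(t+a)$ is not yet violated.
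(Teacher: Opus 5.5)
Your Step~1 recursion and induction target match the paper's. With $\alpha=c/R$ and $\beta=c/(R^2LD_{\cA}^2)$, your bound reads $\Delta_{t+1}\le\max\{1-\alpha,1-\beta\Delta_t\}\Delta_t$, and your $a$ is the paper's $\delta$. Your $M$ is at most the paper's constant $C$ because $f(x_{h_0})\le f(x_0)$, so $\Delta_t\le M/(t+a)$ implies the stated bound.

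Where you differ is in closing the inductive step. You split on which term attains the minimum, which in the quadratic regime forces you to handle the non-monotonicity of $x\mapsto x-x^2/M'$. The paper instead splits on whether $\Delta_t\le C/(t+\delta+1)$ already holds. If so, $\Delta_{t+1}\le\Delta_t$ finishes. If not, the \emph{lower} bound $\Delta_t>C/(t+\delta+1)$ together with $\beta C\ge 1$ gives $1-\beta\Delta_t<\frac{t+\delta}{t+\delta+1}$. Since also $1-\alpha=\frac{\delta}{\delta+1}\le\frac{t+\delta}{t+\delta+1}$, one gets $\Delta_{t+1}\le\frac{t+\delta}{t+\delta+1}\cdot\frac{C}{t+\delta}$. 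This treats both regimes at once and never needs monotonicity of the update map, which is what it buys over your split.

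Your route also works, but two of your own judgments are off.

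\emph{The fallback in (ii) does not fail.} In the subcase $M/(t+a)>M'/2$ you have $\frac{M}{t+a+1}>\frac{M'}{2}\cdot\frac{t+a}{t+a+1}\ge\frac{M'}{4}$, since $t+a\ge a>1$. So (i), together with (ii) and this fallback, is already a complete proof.

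\emph{The reciprocal route you call primary has a genuine hole in the linear regime at $t=0$.} Since $c/R=1/(a+1)$, the inductive hypothesis only gives $(c/R)/\Delta_t\ge\frac{t+a}{(a+1)M}$. This is at least $1/M$ only when $t\ge 1$. For example, if $M=a\Delta_0$, then $\Delta_0\ge M'/a=R^2LD_{\cA}^2/(R-c)>RLD_{\cA}^2$, so you are in the linear regime. There you only get $1/\Delta_1\ge\frac{a(a+2)}{(a+1)M}$, which is strictly below the required $\frac{a+1}{M}$. The loss comes from using $1/(1-u)\ge 1+u$. To repair it, use the exact factor $1/(1-c/R)=(a+1)/a$, or simply keep your multiplicative argument (i) for that regime. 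In regime (ii), the per-step bound $1/\Delta_{t+1}\ge 1/\Delta_t+1/M'\ge (t+a+1)/M$ is valid and is a tidy way to avoid the monotonicity issue.
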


\begin{proof}[Proof of \cref{thm:convex}]
    Define
    \begin{align*}
        \Delta_t := f(x_{h_t}) - f(x^\star),
        \qquad
        \alpha := \frac{2-\eta}{2R},
        \qquad
        \beta := \frac{2-\eta}{2R^2 L D_{\cA}^2},
        \qquad
        \delta := \frac{1}{\alpha} - 1 = \frac{2R}{2-\eta} - 1.
    \end{align*}
    Let
    \begin{align*}
        C
        :=
        \max\left\{
            \delta \left(f(x_0)-f(x^\star)\right),
            \frac{1}{\beta}
        \right\}
        =
        \max\left\{
            \left(\frac{2R}{2-\eta}-1\right)\bigl(f(x_0)-f(x^\star)\bigr),
            \frac{2LR^2D_{\cA}^2}{2-\eta}
        \right\}.
    \end{align*}
    We first derive a recursion for $\Delta_t$. Let $k:=h_t$. Since $k\in \cG\cap\cI_\eta$, \cref{lem:basic}\,\ref{lem:basic:GI} gives
    \begin{align*}
        f(x_{k+1})
        \leq
        f(x_k)-\left(1-\frac{\eta}{2}\right)
        \min\left\{
            1,
            \frac{\grad f(x_k)^\top d_k}{L_k\|d_k\|_2^2}
        \right\}\grad f(x_k)^\top d_k.
    \end{align*}
    Since $h_{t+1}>h_t$, the objective values are nonincreasing along the sequence, and therefore
    \begin{align*}
        \Delta_{t+1}
        =
        f(x_{h_{t+1}})-f(x^\star)
        \leq
        f(x_{k+1})-f(x^\star).
    \end{align*}
    Using \cref{cond:sub}\,\ref{cond:convex}, \cref{lemma:Lipschitz} and \cref{cond:sub}\,\ref{cond:d_t}, we obtain
    \begin{align*}
        \grad f(x_k)^\top d_k
        \geq
        \frac{\Delta_t}{R},
        \qquad
        \frac{\grad f(x_k)^\top d_k}{L_k\|d_k\|_2^2}
        \geq
        \frac{\Delta_t}{RLD_{\cA}^2}.
    \end{align*}
    Hence, we may conclude that
    \begin{align*}
        \Delta_{t+1}
        \leq
        \Delta_t-\frac{2-\eta}{2R}
        \min\left\{
            1,
            \frac{\Delta_t}{RLD_{\cA}^2}
        \right\}\Delta_t
        =
        \max\left\{
            1 - \alpha,
            1 - \beta \Delta_t
        \right\}\Delta_t.
    \end{align*}

    We now prove by induction that $\Delta_t\leq \frac{C}{t+\delta}$.
    For $t=0$, since $h_0\geq 0$ and the objective values are nonincreasing, $\Delta_0 = f(x_{h_0})-f(x^\star) \leq f(x_0)-f(x^\star) \leq \frac{C}{\delta}$.
    Thus the claim holds at $t=0$.

    Assume now that $\Delta_t\leq \frac{C}{t+\delta}$ for some $t\geq 0$. If $\Delta_t\leq \frac{C}{t+\delta+1}$, then $\Delta_{t+1}\leq \Delta_t\leq \frac{C}{t+\delta+1}$, and there is nothing to prove.
    Otherwise, $\Delta_t>\frac{C}{t+\delta+1}$. Since $\beta C \geq 1$, we have
    \begin{align*}
        1-\beta\Delta_t
        <
        1-\frac{1}{t+\delta+1}
        =
        \frac{t+\delta}{t+\delta+1},
        \qquad 
        1-\alpha
        =
        \frac{\delta}{\delta+1}
        \leq
        \frac{t+\delta}{t+\delta+1}.
    \end{align*}
    Therefore, we may conclude that
    \begin{align*}
        \Delta_{t+1}
        \leq
        \max\left\{
            1-\alpha,
            1-\beta\Delta_t
        \right\}\Delta_t 
        \leq
        \frac{t+\delta}{t+\delta+1}\cdot \frac{C}{t+\delta}
        =
        \frac{C}{t+\delta+1}.
    \end{align*}
    This closes the induction.
    Recalling the definitions of $\Delta_t$, $C$ and $\delta$, and using the induction claim conclude the proof.
\end{proof}

For the subroutines studied in \cref{alg:fw-sub,alg:mp-sub,alg:pairwise-sub,alg:away-sub}, the lower bounds \cref{lem:fw-sub,lem:away-sub,lem:mp-sub,lem:pairwise-sub} imply that $|\cG \cap \cI_\eta \cap [t]|$ grows linearly in $t$. By \cref{lem:conv:translate}, any rate of convergence established along the subsequence $\{x_{h_t}\}_{t \geq 0}$ is therefore inherited by the full sequence $\{x_t\}_{t \geq 0}$, up to a shift in the iteration index. We record this in the following corollary, stated for a generic rate $r(\cdot)$ so that it applies uniformly to \cref{thm:convex} and the accelerated rates in the subsequent section.
The proof follows directly from \cref{lem:conv:translate} and is omitted for brevity.

\begin{corollary}
\label{cor:convex-full-sequence}
    Under assumptions of \cref{thm:convex}, suppose that there exist constants $a > 0$ and $b \geq 0$ such that $|\cG \cap \cI_\eta \cap [t]| \geq at - b$ for all $t \ge 0$ and a nonincreasing function $r : \mathbb{N} \to \mathbb{R}_+$ such that
    \begin{align*}
        f(x_{h_t}) - f(x^\star) \leq r(t)
        \qquad \text{for all } t \geq 0.
    \end{align*}
    Then, for all $t \geq (b+1)/a$,
    \begin{align*}
        f(x_t) - f(x^\star) \leq r\bigl(\lceil at - b \rceil - 1\bigr).
    \end{align*}
\end{corollary}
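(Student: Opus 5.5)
The plan is to reduce the corollary directly to \cref{lem:conv:translate}. I will choose the index $q := \lceil at-b\rceil - 1$ and verify that it satisfies the hypotheses of that lemma. The rate $r$ is then transferred using monotonicity of $r$, which is in fact not needed beyond evaluating at $q$.

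\textbf{Step 1: $q$ is a nonnegative integer.} Fix $t \geq (b+1)/a$. Then $at - b \geq 1$, so $\lceil at-b\rceil \geq 1$, and hence $q \geq 0$ is an integer.

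\textbf{Step 2: the cardinality condition.} Since $|\cG\cap\cI_\eta\cap[t]|$ is an integer bounded below by $at-b$, it is at least $\lceil at-b\rceil = q+1$. This is exactly the hypothesis of \cref{lem:conv:translate} with this $q$, which gives $h_q\leq t$ and
\begin{align*}
f(x_t)-f(x^\star)\leq f(x_{h_q})-f(x^\star).
\end{align*}

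\textbf{Step 3: apply the assumed rate.} The hypothesis $f(x_{h_q})-f(x^\star)\leq r(q) = r\bigl(\lceil at-b\rceil-1\bigr)$ then yields the claim. Two small points remain to be noted. First, $h_q$ is well defined because $\cG\cap\cI_\eta$ is infinite, and indeed Step 2 already exhibits at least $q+1$ of its elements in $[t]$. Second, $r$ is defined on $\mathbb{N}$ including $0$, which covers $q=0$.

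There is no real obstacle. The only delicate point is the integer rounding in Step 2, namely that an integer $\geq at-b$ is $\geq \lceil at-b\rceil$; this is what makes the index $\lceil at-b\rceil - 1$, rather than $at-b-1$, the right one. The nonincreasing property of $r$ is only needed if one wishes to replace the argument by any smaller lower bound, such as $at-b-1$.
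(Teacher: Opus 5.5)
Your proof is correct and is exactly the argument the paper has in mind: the paper omits the proof, saying only that it follows directly from \cref{lem:conv:translate}, and your choice $q=\lceil at-b\rceil-1$ is that direct application. Using integrality of $|\cG\cap\cI_\eta\cap[t]|$ to pass from $at-b$ to $\lceil at-b\rceil$ is the right step, and you are also right that the monotonicity of $r$ is not needed for the stated bound.
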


\section{Acceleration under Specific Subroutines}
\label{sec:acceleration}

The results of Section~\ref{sec:convergence} apply uniformly to all admissible subroutines, but they do not fully exploit the geometry of specific variants. 
In this section we show that, under stronger assumptions, particular subroutines admit accelerated rates. As in \cref{sec:convex-convergence}, for fixed $\eta > 1$ that will be clear from context, we will denote by $\{h_t\}_{t \geq 0}$ the indices in $\cG \cap \cI_\eta$ in increasing order.

\subsection{Acceleration under Closed-Loop Subroutine}
\label{sec:acc:FW}
The acceleration results in this subsection exploit geometric information that is not used in the abstract analysis of Sections~\ref{sec:framework} and~\ref{sec:convergence}. Throughout this subsection, the feasible set is $\cX=\conv(\cA)$, and we impose the following additional assumptions.

\begin{assumption}[Strongly convex domain]\label{ass:sc-X}
    The feasible set $\cX$ is $\alpha$-strongly convex for some $\alpha > 0$, that is, for any $\rho \in [0,1]$, $x,y \in \cX$ and $z \in \R^n$ with $\|z\|_2 \leq 1$, we have
    \begin{align*}
        \rho x+(1-\rho)y +\frac{\rho(1-\rho)\alpha \|x-y\|_2^2}{2} z \in \cX.
    \end{align*}
\end{assumption}

\begin{assumption}[Strongly convex objective]\label{ass:sc-f}
    The objective function $f$ is $\mu$-strongly convex for some $\mu > 0$, that is, for any $x,y \in \R^n$, we have
    \begin{align*}
        f(y) \geq f(x)+\grad f(x)^\top (y-x)+ \frac{\mu}{2}\|y-x\|_2^2.
    \end{align*}
\end{assumption}

\begin{assumption}[Nonzero gradient]\label{ass:grad}
    There exists a constant $g > 0$ such that $\|\grad f(x)\|_2 \geq g$ for any $x \in \cX$.
\end{assumption}

The next theorem shows that, under different additional assumptions, the sequence $\{x_{h_t}\}_{t\geq 0}$ enjoys either the accelerated sublinear rate $\cO(1/t^2)$ or a linear convergence rate $\cO(\rho^t)$ for some $\rho \in (0,1)$. 

\begin{theorem}
\label{thm:accelerate}
    Suppose $\{x_t\}_{t \geq 0}$ is the sequence generated by \cref{alg:ac-fw} under the closed-loop Frank-Wolfe subroutine in \cref{alg:fw-sub}, let $\eta \!\in\! (1,2)$. Suppose further that \cref{ass:X-compact,ass:convex-functions,ass:smooth-functions,ass:sc-X,cond:r_t} hold.  
    \begin{enumerate}[label=(\roman*)]
        \item \label{thm:accelerate:sublinear} If additionally \cref{ass:sc-f} holds, then for any $t \geq 0$,
        \begin{align*}
            f(x_{h_t}) - f(x^\star) 
            &\leq
            \frac{
                \max\left\{
                    \frac{LD_{\cA}^2}{2},
                    18\left(\frac{1}{1+\sqrt{\frac{\eta}{2}}}\right)^2
                    \left(\frac{\alpha\sqrt{\mu}}{8\sqrt{2}L}\right)^{-2}
                \right\}
                \left(\frac{1}{1-\sqrt{\frac{\eta}{2}}}\right)^2
            }{
                \left(
                    t + \frac{1}{1-\sqrt{\frac{\eta}{2}}}
                \right)^2
            }.
        \end{align*}
        \item \label{thm:accelerate:linear} If additionally \cref{ass:grad} holds, then for any $t \geq 0$, 
        \begin{align*}
            f(x_{h_t}) - f(x^\star) 
            \leq
            \frac{LD_{\cA}^2}{2}
            \left(
                \max\left\{
                    \frac{\eta}{2},
                    1-\left(1-\frac{\eta}{2}\right)\frac{\alpha g}{8L}
                \right\}
            \right)^{t}.
        \end{align*}
    \end{enumerate}
\end{theorem}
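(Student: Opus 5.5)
The proof follows the Garber--Hazan template for Frank-Wolfe over strongly convex sets, with $L$ replaced by the local estimates $L_k$ along the subsequence $\{h_t\}$. Fix $k=h_t\in\cG\cap\cI_\eta$ and write $\Delta_t := f(x_{h_t})-f(x^\star)$. As in the proof of \cref{thm:convex}, we have $\Delta_{t+1}\le f(x_{k+1})-f(x^\star)$. Moreover, \cref{lem:basic}\,\ref{lem:basic:GI} gives
\begin{align*}
  f(x_{k+1}) \le f(x_k) - \Bigl(1-\tfrac{\eta}{2}\Bigr)\min\Bigl\{1,\tfrac{g_k}{L_k\|x_k-v_k\|_2^2}\Bigr\} g_k,
\end{align*}
where $g_k := \grad f(x_k)^\top(x_k-v_k)\ge \Delta_t$ by \cref{lem:basic}\,\ref{lem:basic:fw-gap}.

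The first key step uses strong convexity of $\cX$ (\cref{ass:sc-X}) to bound $g_k$ below by $\|x_k-v_k\|_2^2$. Take $x=x_k$, $y=v_k$, $\rho=1/2$, and $z=-\grad f(x_k)/\|\grad f(x_k)\|_2$. The resulting point lies in $\cX$, and optimality of $v_k$ over $\cX=\conv(\cA)$ then gives
\begin{align*}
  g_k \ge \tfrac{\alpha}{8}\|\grad f(x_k)\|_2\|x_k-v_k\|_2^2 .
\end{align*}
Hence $\tfrac{g_k}{L_k\|x_k-v_k\|_2^2}\ge \tfrac{\alpha\|\grad f(x_k)\|_2}{8L}$ by \cref{lemma:Lipschitz}.

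Part \ref{thm:accelerate:linear} follows by a case split. If the minimum equals $1$, which is the regime $\gamma_k=1$, I would bypass the generic bound and redo \cref{lem:basic}\,\ref{lem:basic:one-step} with $\gamma=1$. Convexity gives
\begin{align*}
  f(x_k)-g_k+\tfrac{L_{k+1}}{2}\|d_k\|_2^2 \le f(x^\star)+\tfrac{\eta L_k}{2}\|d_k\|_2^2 \le f(x^\star)+\tfrac{\eta}{2}g_k,
\end{align*}
and with $g_k\ge\Delta_t$ this yields $\Delta_{t+1}\le\tfrac{\eta}{2}\Delta_t$. I need to check this carefully; the alternative route via $(1-\tfrac{\eta}{2})g_k\ge(1-\tfrac{\eta}{2})\Delta_t$ also gives the factor $\tfrac{\eta}{2}$. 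Otherwise, the decrease is at least $(1-\tfrac{\eta}{2})\tfrac{\alpha g}{8L}\Delta_t$ by \cref{ass:grad}. Iterating gives the stated rate. For the initial value, $f(x_0)-f(x^\star)\le \tfrac{LD_\cA^2}{2}$ because $x_0$ is a Frank-Wolfe step from $x_{-1}$: by smoothness and convexity, $f(x_0)\le f(x_{-1})-g_{-1}+\tfrac{L}{2}D_\cA^2\le f(x^\star)+\tfrac{L}{2}D_\cA^2$. Then $\Delta_0\le f(x_0)-f(x^\star)$.

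Part \ref{thm:accelerate:sublinear} instead uses strong convexity of $f$, via the standard bound
\begin{align*}
  \|\grad f(x_k)\|_2 \ge \sqrt{\mu/2}\,\sqrt{\Delta_t}.
\end{align*}
This holds because, with $\grad f(x^\star)^\top(x-x^\star)\ge 0$ and $\|x_k-x^\star\|_2\le\sqrt{2\Delta_t/\mu}$, we get $\Delta_t\le\grad f(x_k)^\top(x_k-x^\star)\le\|\grad f(x_k)\|_2\sqrt{2\Delta_t/\mu}$. The recursion becomes
\begin{align*}
  \Delta_{t+1}\le \max\bigl\{\tfrac{\eta}{2},\,1-c\sqrt{\Delta_t}\bigr\}\Delta_t,\qquad c=\Bigl(1-\tfrac{\eta}{2}\Bigr)\tfrac{\alpha\sqrt{\mu}}{8\sqrt2 L}.
\end{align*}
The main obstacle is solving this recursion by induction with the exact constants in the statement. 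I would posit $\Delta_t\le C/(t+\delta)^2$ with $\delta=1/(1-\sqrt{\eta/2})$, and check both branches as in \cref{thm:convex}. In the $\tfrac{\eta}{2}$ branch, I need $\tfrac{\eta}{2}\le\bigl(\tfrac{t+\delta}{t+\delta+1}\bigr)^2$, which holds because $\sqrt{\eta/2}=1-1/\delta\le (t+\delta)/(t+\delta+1)$. In the other branch, I would follow the same case split: either $\Delta_t\le C/(t+\delta+1)^2$ already, or else $c\sqrt{\Delta_t}>c\sqrt C/(t+\delta+1)$. In the latter case I use $\bigl(1-\tfrac{2}{t+\delta+1}\bigr)\le\bigl(\tfrac{t+\delta}{t+\delta+1}\bigr)^2$ and require $c\sqrt C\ge 2$ (with slack absorbing the factor $(1+\sqrt{\eta/2})^{-1}=(1-\eta/2)/\ldots$ in the constant). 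The base case $\Delta_0\le LD_\cA^2/2\le C/\delta^2$ fixes the first term of the max. Matching the precise constant $18(1+\sqrt{\eta/2})^{-2}(\cdots)^{-2}$ will require tuning these inequalities, and I expect that to be the delicate part.
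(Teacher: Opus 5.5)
Your proposal is essentially the paper's proof. It has the same ingredients: the midpoint application of \cref{ass:sc-X} giving $g_k\ge\frac{\alpha}{8}\|\grad f(x_k)\|_2\|d_k\|_2^2$, then \cref{lem:basic}\,\ref{lem:basic:GI} with $g_k\ge\Delta_t$ to obtain $\Delta_{t+1}\le\max\{\frac{\eta}{2},1-c\sqrt{\Delta_t}\}\Delta_t$ (or the analogue with $\frac{\alpha g}{8L}$), and an induction on $C/(t+\delta)^2$ with $\delta=1/(1-\sqrt{\eta/2})$, which is the paper's $\delta+3$.

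One correction concerns your first route in the $\gamma_k=1$ regime of part (ii), which does not work. After bounding $f(x_k)-g_k\le f(x^\star)$ you are left with only $\Delta_{t+1}\le\frac{\eta}{2}g_k$. Since $g_k\ge\Delta_t$ points the wrong way, this gives no contraction.

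Your alternative is correct: $\Delta_{t+1}\le\Delta_t-(1-\frac{\eta}{2})g_k\le\frac{\eta}{2}\Delta_t$. This is exactly what \cref{lem:basic}\,\ref{lem:basic:GI} already encodes, so the paper needs no separate case split there.

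The constant matching you expect to be delicate is in fact immediate. Since $(1-\sqrt{\eta/2})(1+\sqrt{\eta/2})=1-\frac{\eta}{2}$, the stated numerator equals $\max\{\delta^2 LD_\cA^2/2,\,18c^{-2}\}$. Hence $c\sqrt{C}\ge 3\sqrt{2}>2$, and your induction closes as written.

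Your case split at $C/(t+\delta+1)^2$ is slightly sharper than the paper's split at half the target bound, $C/(2(t+\delta)^2)$. That is why your argument only needs $C\ge 4c^{-2}$, whereas the paper carries $18c^{-2}$.
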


Since the closed-loop Frank-Wolfe subroutine satisfies $\gamma_t^{\max}=1$ at every iteration, we have $\cG = \mathbb N \cup \{0\}$, and therefore $\cG \cap \cI_\eta = \cI_\eta$. Consequently, $\{h_t\}_{t \geq 0}$ simply denotes the indices in $\cI_\eta$.  
The proof of \cref{thm:accelerate} is based on the following one-step recursion, which strengthens the generic descent estimate from \cref{lem:basic} by exploiting the strong convexity of the feasible~set.

\begin{lemma} \label{lemma:intermediate-identity}
    Under the common assumptions of \cref{thm:accelerate}, for any $t \geq 0$,
    \begin{align*}
        f(x_{h_{t+1}}) - f(x^\star)
        \leq
        \max\left\{
            \frac{\eta}{2},
            1 - \left(1-\frac{\eta}{2}\right)\frac{\alpha\|\grad f(x_{h_t})\|_2}{8L}
        \right\}
        (f(x_{h_t})-f(x^\star)).
    \end{align*}
    If additionally \cref{ass:sc-f} holds, then for every $t\geq 0$,
    \begin{align*}
        f(x_{h_{t+1}})-f(x^\star)
        \leq
        \max\left\{
            \frac{\eta}{2},
            1-\left(1-\frac{\eta}{2}\right)\frac{\alpha\sqrt{\mu\bigl(f(x_{h_t})-f(x^\star)\bigr)}}{8\sqrt{2}L}
        \right\}
        \bigl(f(x_{h_t})-f(x^\star)\bigr).
    \end{align*}
\end{lemma}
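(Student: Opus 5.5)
Let $k := h_t$, so $k \in \cI_\eta$ and $\cG=\bbZ_+$. Since function values are nonincreasing, $f(x_{h_{t+1}}) \leq f(x_{k+1})$. It therefore suffices to bound $f(x_{k+1})-f(x^\star)$ in terms of $f(x_k)-f(x^\star)$. Write $g_k := \grad f(x_k)^\top(x_k-v_k)$ for the Frank-Wolfe gap. By \cref{lem:basic}\,\ref{lem:basic:GI} with $d_k = x_k - v_k$, we have
\begin{align*}
    f(x_{k+1}) \leq f(x_k) - \Bigl(1-\frac{\eta}{2}\Bigr)\min\Bigl\{1, \frac{g_k}{L_k\|x_k-v_k\|_2^2}\Bigr\} g_k .
\end{align*}
I will split into two cases according to which term attains the minimum.

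\emph{Case 1: the minimum is $1$.} Here $g_k \leq L_k\|x_k-v_k\|_2^2$ fails, i.e.\ $g_k > L_k\|d_k\|_2^2$, and the bound reads $f(x_{k+1}) \leq f(x_k) - (1-\eta/2) g_k$. Using $g_k \geq f(x_k)-f(x^\star)$ from \cref{lem:basic}\,\ref{lem:basic:fw-gap}, we get
\begin{align*}
    f(x_{k+1})-f(x^\star) \leq \frac{\eta}{2}\bigl(f(x_k)-f(x^\star)\bigr),
\end{align*}
which is the first term in the max.

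\emph{Case 2: the minimum is $g_k/(L_k\|d_k\|_2^2)$.} The decrease is $(1-\eta/2)\, g_k^2/(L_k\|d_k\|_2^2) \geq (1-\eta/2)\, g_k^2/(L\|d_k\|_2^2)$ by \cref{lemma:Lipschitz}. The key geometric input is the standard strongly-convex-set estimate
\begin{align*}
    g_k \geq \frac{\alpha}{8}\|\grad f(x_k)\|_2 \|x_k-v_k\|_2^2 .
\end{align*}
To prove it, apply \cref{ass:sc-X} with $x = x_k$, $y = v_k$, $\rho = 1/2$, and $z = -\grad f(x_k)/\|\grad f(x_k)\|_2$. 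The point
\begin{align*}
    w := \frac{x_k+v_k}{2} - \frac{\alpha\|x_k-v_k\|_2^2}{8}\, \frac{\grad f(x_k)}{\|\grad f(x_k)\|_2}
\end{align*}
lies in $\cX$. Since $v_k$ minimizes the linear functional over $\cA$, it also minimizes it over $\conv(\cA)=\cX$, so $\grad f(x_k)^\top v_k \leq \grad f(x_k)^\top w$. Rearranging gives $\tfrac12 g_k \geq \tfrac{\alpha}{8}\|\grad f(x_k)\|_2\|x_k-v_k\|_2^2$, which is in fact twice as strong as needed. The case $\grad f(x_k)=0$ is trivial, since then $x_k$ is optimal.

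Substituting this bound for one factor of $g_k$ gives
\begin{align*}
    \frac{g_k^2}{L\|d_k\|_2^2} \geq \frac{\alpha\|\grad f(x_k)\|_2}{8L}\, g_k \geq \frac{\alpha\|\grad f(x_k)\|_2}{8L}\bigl(f(x_k)-f(x^\star)\bigr),
\end{align*}
which yields the second term of the first inequality. The max then covers both cases.

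For the strongly convex refinement, I will use $\|\grad f(x_k)\|_2 \geq \sqrt{\mu(f(x_k)-f(x^\star))/2}$. This follows from \cref{ass:sc-f} by minimizing the right-hand side over $y$, which gives $f(x^\star) \geq f(x_k) - \|\grad f(x_k)\|_2^2/(2\mu)$, so $\|\grad f(x_k)\|_2 \geq \sqrt{2\mu(f(x_k)-f(x^\star))}$. This is stronger than needed and hence suffices. Plugging it into the first inequality gives the second.

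The main obstacle is the geometric scaling inequality. In particular, one must make sure that minimizing over $\cA$ is equivalent to minimizing over $\conv(\cA)$, which holds for a linear objective on the convex hull of a compact set. One must also take care with the sign convention of $z$ in \cref{ass:sc-X}.
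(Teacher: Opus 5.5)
Your proof is correct and takes essentially the same route as the paper. The core steps match: the same application of \cref{ass:sc-X} at $\rho = 1/2$ to get $\grad f(x_k)^\top d_k \geq \frac{\alpha}{8}\|\grad f(x_k)\|_2\|d_k\|_2^2$ (you keep the factor $2$ that the paper discards), combined with \cref{lem:basic}\,\ref{lem:basic:GI} and \ref{lem:basic:fw-gap}, \cref{lemma:Lipschitz}, and monotonicity of $f(x_t)$, and your two cases simply unfold the paper's lower bound on the minimum. The only real deviation is in the strongly convex refinement: you derive the Polyak--{\L}ojasiewicz-type bound $\|\grad f(x_k)\|_2 \geq \sqrt{2\mu(f(x_k)-f(x^\star))}$ by minimizing the strong-convexity quadratic at the pair $(x_k,x^\star)$, whereas the paper combines quadratic growth with Cauchy--Schwarz to get $\sqrt{\mu(f(x_k)-f(x^\star))/2}$; your bound is sharper by a factor of $2$ and equally valid.
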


\begin{proof}[Proof of \cref{lemma:intermediate-identity}]
    Fix $t\geq 0$. By \cref{ass:sc-X}, for any $z\in\R^n$ with $\|z\|_2\leq 1$,
    \begin{align*}
        \frac{1}{2}v_{h_t} + \frac{1}{2}x_{h_t} + \frac{\alpha}{8} \|v_{h_t} - x_{h_t}\|_2^2 z
        \in
        \conv(\cA) = \cX.
    \end{align*}
    Since $v_{h_t}$ minimizes $u \mapsto \grad f(x_{h_t})^\top u$ over $\cA$, it also minimizes it over $\conv(\cA)$. Therefore,
    \begin{align*}
        \grad f(x_{h_t})^\top v_{h_t}
        \leq
        \grad f(x_{h_t})^\top\left(
            \frac{1}{2}v_{h_t} + \frac{1}{2}x_{h_t} + \frac{\alpha}{8} \|v_{h_t} - x_{h_t}\|_2^2 z
        \right).
    \end{align*}
    Rearranging gives
    \begin{align*}
        \grad f(x_{h_t})^\top (v_{h_t} - x_{h_t})
        \leq
        \frac{1}{2}\grad f(x_{h_t})^\top (v_{h_t} - x_{h_t})
        +\left( \frac{\alpha}{8}\|v_{h_t} - x_{h_t}\|_2^2 \right) z^\top \grad f(x_{h_t}).
    \end{align*}
    Since $\cX=\conv(\cA)$, \cref{lem:basic}\,\ref{lem:basic:fw-gap} yields $\grad f(x_{h_t})^\top (v_{h_t} - x_{h_t}) \leq f(x^\star)-f(x_{h_t}) \leq 0$.
    Hence,
    \begin{align*}
        \grad f(x_{h_t})^\top (v_{h_t} - x_{h_t})
        \leq
        \left(\frac{\alpha}{8}\|v_{h_t} - x_{h_t}\|_2^2 \right) z^\top \grad f(x_{h_t}),
        \qquad \forall z \in \R^n,\ \|z\|_2 \leq 1.
    \end{align*}
    Minimizing the right-hand side over all $\|z\|_2\leq 1$ and using $d_{h_t} = x_{h_t} - v_{h_t}$, we obtain
    \begin{align*}
        \grad f(x_{h_t})^\top d_{h_t}
        \geq
        \frac{\alpha}{8} \|d_{h_t}\|_2^2 \|\grad f(x_{h_t})\|_2
        \quad \implies \quad
        \frac{\grad f(x_{h_t})^\top d_{h_t}}{L_{h_t} \|d_{h_t}\|_2^2}
        \geq
        \frac{\alpha\|\grad f(x_{h_t})\|_2}{8 L_{h_t}}
        \geq
        \frac{\alpha\|\grad f(x_{h_t})\|_2}{8L},
    \end{align*}
    where the last inequality follows from \cref{lemma:Lipschitz}.
    Since $h_t \in \cG \cap \cI_\eta$, \cref{lem:basic}\,\ref{lem:basic:GI} gives
    \begin{align*}
        f(x_{h_t+1})
        \leq
        f(x_{h_t}) - \left(1-\frac{\eta}{2}\right)
        \min\left\{
            1,
            \frac{\grad f(x_{h_t})^\top d_{h_t}}{L_{h_t}\|d_{h_t}\|_2^2}
        \right\}
        \grad f(x_{h_t})^\top d_{h_t}.
    \end{align*}
    Using the lower bound above and \cref{lem:basic}\,\ref{lem:basic:fw-gap}, we obtain
    \begin{align*}
        f(x_{h_t+1})-f(x^\star)
        &\leq
        f(x_{h_t})-f(x^\star)
        -\left(1-\frac{\eta}{2}\right)
        \min\left\{
            1,
            \frac{\alpha\|\grad f(x_{h_t})\|_2}{8L}
        \right\}
        \bigl(f(x_{h_t})-f(x^\star)\bigr).
    \end{align*}
    Since the objective values are nonincreasing and $h_{t+1}>h_t$, we have $f(x_{h_{t+1}}) \leq f(x_{h_t+1})$, and therefore
    \begin{align*}
        f(x_{h_{t+1}})-f(x^\star)
        \leq
        \max\left\{
            \frac{\eta}{2},
            1-\left(1-\frac{\eta}{2}\right)\frac{\alpha\|\grad f(x_{h_t})\|_2}{8L}
        \right\}
        \bigl(f(x_{h_t})-f(x^\star)\bigr),
    \end{align*}
    which proves the first claim.

    Suppose now that \cref{ass:sc-f} also holds. Since $x^\star$ minimizes $f$ over $\conv(\cA)$ and $f$ is differentiable, the first-order optimality condition gives $\grad f(x^\star)^\top (y-x^\star)\geq 0$ for all $y \in \conv(\cA)$.
    Using strong convexity of $f$, we therefore obtain $f(y)-f(x^\star)\geq \frac{\mu}{2}\|y-x^\star\|_2^2$ for all $y \in \conv(\cA)$.
    On the other hand, by convexity of $f$, $f(y)-f(x^\star)\leq \grad f(y)^\top (y-x^\star)\leq \|\grad f(y)\|_2\|y-x^\star\|_2.$
    Combining the two estimates yields $\|\grad f(y)\|_2\geq \sqrt{\frac{\mu}{2}\bigl(f(y)-f(x^\star)\bigr)}$ for all $y \in \conv(\cA)$.
    Applying this with $y=x_{h_t}$ in the first claim proves the second statement.
\end{proof}

Armed with \cref{lemma:intermediate-identity}, we are now ready to prove \cref{thm:accelerate}.

\begin{proof}[Proof of \cref{thm:accelerate}]
    We first prove \ref{thm:accelerate:sublinear}. Define
    \begin{align*}
        M:=\left(1-\frac{\eta}{2}\right)\frac{\alpha\sqrt{\mu}}{8\sqrt{2}L},
        \qquad
        \delta:=\frac{1}{1-\sqrt{\frac{\eta}{2}}}-3,
        \qquad
        C:= \max\left\{
            \left(\frac{1}{1-\sqrt{\frac{\eta}{2}}}\right)^2\frac{LD_{\cA}^2}{2},
            18M^{-2}
        \right\}.
    \end{align*}
    We claim that, for every $t\geq 0$,
    \begin{align}
    \label{eq:induction:sc}
        f(x_{h_t})-f(x^\star)\leq \frac{C}{(t+\delta+3)^2}.
    \end{align}
    For $t=0$, by smoothness, we have
    \begin{align*}
        f(x_0)
        \leq
        f(x_{-1})+\grad f(x_{-1})^\top(x_0-x_{-1})+\frac{L}{2}\|x_0-x_{-1}\|_2^2.
    \end{align*}
    Since $x_0$ minimizes $v\mapsto \grad f(x_{-1})^\top v$ over $\cA$ and $x^\star \in \conv(\cA)$, we have $\grad f(x_{-1})^\top x_0\leq \grad f(x_{-1})^\top x^\star$.
    Therefore, we may conclude that
    \begin{align*}
        f(x_0) - f(x^\star)
        \leq
        f(x_{-1})-f(x^\star)+\grad f(x_{-1})^\top(x^\star-x_{-1})+\frac{L}{2}\|x_0-x_{-1}\|_2^2 
        \leq
        \frac{LD_{\cA}^2}{2},
    \end{align*}
    where the last step uses convexity of $f$ and $x_{-1},x_0\in\cA$. Since $h_0\geq 0$ and the objective values are nonincreasing along the sequence. This implies 
    \begin{align}
    \label{eq:initial}
        f(x_{h_0})-f(x^\star) \leq f(x_0) - f(x^\star) \leq \frac{LD_{\cA}^2}{2} \leq \frac{C}{(\delta+3)^2},
    \end{align}
    as required.
    Assume now that~\eqref{eq:induction:sc} holds for some $t\geq 0$. If
    \begin{align*}
        f(x_{h_t})-f(x^\star)\leq \frac{C}{2(t+\delta+3)^2}
        \quad \implies \quad
        f(x_{h_{t+1}})-f(x^\star)
        \leq
        f(x_{h_t})-f(x^\star)
        \leq
        \frac{C}{(t+\delta+4)^2},
    \end{align*}
    because $2(t+\delta+3)^2 -(t+\delta+4)^2 =(t+\delta)^2 + 4(t+\delta) + 2 > 0$.

    Assume next that
    \begin{align*}
        f(x_{h_t})-f(x^\star)>\frac{C}{2(t+\delta+3)^2}.
    \end{align*}
    By \cref{lemma:intermediate-identity},
    \begin{align*}
        f(x_{h_{t+1}})-f(x^\star)
        \leq
        \max\left\{
            \frac{\eta}{2},
            1-M\sqrt{f(x_{h_t})-f(x^\star)}
        \right\}
        \bigl(f(x_{h_t})-f(x^\star)\bigr).
    \end{align*}
    If $\frac{\eta}{2}\geq 1-M\sqrt{f(x_{h_t})-f(x^\star)}$, then
    \begin{align*}
        f(x_{h_{t+1}})-f(x^\star)
        \leq
        \frac{\eta}{2}\bigl(f(x_{h_t})-f(x^\star)\bigr) 
        \leq
        \frac{\eta C}{2(t+\delta+3)^2} 
        \leq
        \frac{C}{(t+\delta+4)^2},
    \end{align*}
    where the last inequality follows from $\frac{1}{t+\delta+4}\leq \frac{1}{\delta+3}=1-\sqrt{\frac{\eta}{2}}$.
    
    If instead $\frac{\eta}{2}<1-M\sqrt{f(x_{h_t})-f(x^\star)}$, then
    \begin{align*}
        f(x_{h_{t+1}})-f(x^\star)
        &\leq
        \left(1-M\sqrt{f(x_{h_t})-f(x^\star)}\right)\bigl(f(x_{h_t})-f(x^\star)\bigr) 
        <
        \left(1-M\sqrt{\frac{C}{2}}\frac{1}{t+\delta+3}\right)\frac{C}{(t+\delta+3)^2}.
    \end{align*}
    To show that this is at most $\frac{C}{(t+\delta+4)^2}$, it suffices to prove
    \begin{align*}
        \frac{(t+\delta+4)^2}{(t+\delta+3)^2}
        \left(1-M\sqrt{\frac{C}{2}}\frac{1}{t+\delta+3}\right)\leq 1.
    \end{align*}
    This is equivalent to proving
    \begin{align*}
        \frac{(2(t+\delta)+7)(t+\delta+3)}{(t+\delta+4)^2}
        \leq
        M\sqrt{\frac{C}{2}}.
    \end{align*}
    By the definition of $C$, we have $M \sqrt{{C}/{2}} \geq M \sqrt{9M^{-2}}=3$.
    On the other hand,
    \begin{align*}
        \frac{(2(t+\delta)+7)(t+\delta+3)}{(t+\delta+4)^2}-3
        =
        \frac{-(t+\delta)^2-11(t+\delta)-27}{(t+\delta+4)^2}<0.
    \end{align*}
    Hence, the required inequality holds, and the induction is complete. This proves \ref{thm:accelerate:sublinear}.

    
    We next prove \ref{thm:accelerate:linear}. Define
    \begin{align*}
        \rho:=
        \max\left\{
            \frac{\eta}{2},
            1-\left(1-\frac{\eta}{2}\right)\frac{\alpha g}{8L}
        \right\}.
    \end{align*}
    Since $\eta\in(1,2)$ and $g>0$, we have $\rho<1$. We claim that, for every $t\geq 0$,
    \begin{align}
    \label{eq:induction:linear}
        f(x_{h_t})-f(x^\star)\leq \frac{LD_{\cA}^2}{2}\rho^t.
    \end{align}
    We prove this by induction. 
    For $t=0$, the initial bound~\eqref{eq:initial} gives $f(x_{h_0})-f(x^\star) \leq {LD_{\cA}^2}/{2}.$
    Thus, the base case of the induction holds.

    Assume~\eqref{eq:induction:linear} holds for some $t \geq 0$. Then, by \cref{lemma:intermediate-identity} and \cref{ass:grad},
    \begin{align*}
        f(x_{h_{t+1}})-f(x^\star)
        &\leq
        \max\left\{
            \frac{\eta}{2},
            1-\left(1-\frac{\eta}{2}\right)\frac{\alpha\|\grad f(x_{h_t})\|_2}{8L}
        \right\}
        \bigl(f(x_{h_t})-f(x^\star)\bigr) \\
        &\leq
        \rho \left(f(x_{h_t})-f(x^\star)\right) \\
        &\leq
        \frac{LD_{\cA}^2}{2}\rho^{t+1}.
    \end{align*}
    This concludes the induction and completes the proof of \ref{thm:accelerate:linear}.
\end{proof}

For the closed-loop Frank-Wolfe subroutine in \cref{alg:fw-sub}, the lower bound established in \cref{lem:fw-sub} shows that $|\cG \cap \cI_\eta \cap[t]|$ grows linearly with $t$. 
Therefore, by \cref{cor:convex-full-sequence}, convergence estimates obtained along the subsequence $\{x_{h_t}\}_{t\geq 0}$ transfer directly to the full sequence $\{x_t\}_{t\geq 0}$. 
In particular, for all sufficiently large $t$, the full sequence inherits the same $\cO(1/t^2)$ or $\cO(\rho^t)$ rate.

\subsection{Acceleration under Matching Pursuit, Pairwise, and Away-Step Subroutines}
\label{sec:acc:MP-PFW-AFW}

We now turn to linear convergence under stronger assumptions for the Matching Pursuit, pairwise Frank-Wolfe, and away-step Frank-Wolfe subroutines. 
In all three cases, the key additional ingredient is a quadratic error bound relating the objective gap to the square of the directional gap generated by the corresponding subroutine. 
We first introduce the geometric quantities that appear in these bounds, and then state the main theorem of this subsection.

\begin{definition}[Geometric strong convexity]
\label{def:geo-strong-convexity}
The geometric strong convexity constant $\mu_\cA$ of $f$ over $\conv(\cA)$ is defined as
\begin{align*}
    \mu_\cA
    :=
    \inf\left\{
        \frac{2\bigl(f(y)-f(x)-\grad f(x)^\top (y-x)\bigr)}{\gamma_{\cA}(y,x)^2}
        :
        x,y\in\conv(\cA),
        \ \grad f(x)^\top (y-x)<0
    \right\},
\end{align*}
where
\begin{align*}
    &\gamma_{\cA}(y,x)
    :=
    \frac{-\grad f(x)^\top (y-x)}{\grad f(x)^\top (s_{\cA}(x)-v_{\cA}(x))}, \qquad \text{with} \\[1ex]
    &v_{\cA}(x)\in\argmin_{v\in\cA}\grad f(x)^\top v,\\
    &s_{\cA}(x)\in\argmin\left\{
        \grad f(x)^\top s
        :
        s\in \argmax_{u\in \cS}\grad f(x)^\top u,\ \cS\in \cS_x
    \right\}, \\
    &\cS_x
    :=
    \left\{
        \cS\subseteq \cA : x \text{ is a proper convex combination of all atoms in } \cS
    \right\}.
\end{align*}
\end{definition}

\begin{definition}[Minimal directional width]
\label{def:mdw}
The minimal directional width of $\cA$ is
\begin{align*}
    w_\cA
    :=
    \min_{d\in\lin(\cA)\setminus\{0\}}
    \max_{z\in\cA}
    \frac{z^\top d}{\|d\|_2}.
\end{align*}
\end{definition}

Under \cref{ass:sc-f,ass:atom}, the constant $\mu_\cA$ is strictly positive by \citep[Theorem 6]{lacoste2015global}. Let $\{h_t\}_{t\geq 0}$ denote the elements of $\cG\cap\cI_\eta$ in increasing order.

\begin{theorem}
\label{thm:linear-subroutines}
    Suppose $\{x_t\}_{t \geq 0}$ is the sequence generated by \cref{alg:ac-fw} under one of the subroutines \cref{alg:mp-sub,alg:pairwise-sub,alg:away-sub}, and let $\eta\in(1,2)$. Assume \cref{ass:X-compact,ass:smooth-functions,ass:sc-f} together with Condition~\ref{cond:r_t}. In addition, suppose \cref{ass:mp} holds under the Matching Pursuit subroutine, and \cref{ass:atom} holds under the pairwise and away-step subroutines. Then, for any $t\geq 0$,
    \begin{align*}
        f(x_{h_t})-f(x^\star)
        \leq
        \bigl(f(x_0)-f(x^\star)\bigr)
        \max\left\{
            1-\left(1-\frac{\eta}{2}\right)\frac{1}{R},
            1-\left(1-\frac{\eta}{2}\right)\frac{1}{G L D_{\cA}^2}
        \right\}^t,
    \end{align*}
    where $G = 1/(2\mu w_\cA^2)$ for the Matching Pursuit subroutine, $G = 1/(2\mu_\cA)$ for the pairwise Frank-Wolfe subroutine, and $G = 2/\mu_\cA$ for the away-step Frank-Wolfe subroutine.
\end{theorem}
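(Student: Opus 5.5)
The argument mirrors the proof of \cref{thm:convex}, with the quadratic bound replaced by a \emph{quadratic error bound} of the form
\begin{align*}
    f(x_t)-f(x^\star)\leq G\,\bigl(\grad f(x_t)^\top d_t\bigr)^2,
\end{align*}
where $G$ is the constant specific to each subroutine.

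Granting this bound, fix $k=h_t\in\cG\cap\cI_\eta$ and apply \cref{lem:basic}\,\ref{lem:basic:GI}. Then use \cref{lemma:Lipschitz} and \cref{cond:sub}\,\ref{cond:d_t} to replace $L_k\|d_k\|_2^2$ by $LD_{\cA}^2$. Writing $\Delta_t:=f(x_{h_t})-f(x^\star)$, this gives
\begin{align*}
    f(x_{k+1})-f(x^\star)\leq \Delta_t-\Bigl(1-\frac{\eta}{2}\Bigr)\min\Bigl\{\grad f(x_k)^\top d_k,\ \frac{(\grad f(x_k)^\top d_k)^2}{LD_{\cA}^2}\Bigr\}.
\end{align*}
In the first branch we use \cref{cond:sub}\,\ref{cond:convex}, namely $\grad f(x_k)^\top d_k\geq \Delta_t/R$. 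In the second branch we use the error bound, which gives $(\grad f(x_k)^\top d_k)^2\geq \Delta_t/G$. Since objective values are monotone and $h_{t+1}>h_t$, we get
\begin{align*}
    \Delta_{t+1}\leq \max\Bigl\{1-\bigl(1-\tfrac{\eta}{2}\bigr)\tfrac1R,\ 1-\bigl(1-\tfrac{\eta}{2}\bigr)\tfrac{1}{GLD_{\cA}^2}\Bigr\}\Delta_t.
\end{align*}
Unrolling this recursion and using $\Delta_0\leq f(x_0)-f(x^\star)$ yields the claim. The required instances of \cref{cond:sub} are supplied by \cref{lem:mp-sub,lem:pairwise-sub,lem:away-sub}: $R=R_\cA$ for Matching Pursuit, and $R=1$ for the pairwise and away-step subroutines.

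The main work is establishing the error bound for each subroutine.

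\textbf{Matching Pursuit.} Strong convexity gives $f(x_t)-f(x^\star)\leq \frac{1}{2\mu}\|P\grad f(x_t)\|_2^2$, where $P$ is the orthogonal projection onto $\lin(\cA)$. This follows by minimizing the strong-convexity lower bound over $y\in\lin(\cA)$. Next, by \cref{def:mdw} applied with $d=P\grad f(x_t)$ and the symmetry $\cA=-\cA$,
\begin{align*}
    \grad f(x_t)^\top d_t=\max_{z\in\cA} z^\top P\grad f(x_t)\geq w_\cA\|P\grad f(x_t)\|_2 .
\end{align*}
Combining the two gives $G=1/(2\mu w_\cA^2)$.

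\textbf{Pairwise.} Apply \cref{def:geo-strong-convexity} with $x=x_t$ and $y=x^\star$ (the case $\grad f(x_t)^\top(x^\star-x_t)\geq 0$ is trivial, since then $x_t$ is optimal). Minimizing the resulting quadratic lower bound in $\gamma_\cA$ gives
\begin{align*}
    f(x_t)-f(x^\star)\leq \frac{\bigl(\grad f(x_t)^\top(s_\cA(x_t)-v_\cA(x_t))\bigr)^2}{2\mu_\cA}.
\end{align*}
This is the standard argument of \citep{lacoste2015global}. Since $\cS_t\in\cS_{x_t}$, we have $\grad f(x_t)^\top s_t\geq \grad f(x_t)^\top s_\cA(x_t)$. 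Hence $\grad f(x_t)^\top d_t$ dominates the pairwise gap, which gives $G=1/(2\mu_\cA)$.

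\textbf{Away-step.} The selection rule ensures that $2\grad f(x_t)^\top d_t\geq \grad f(x_t)^\top(s_t-v_t)$. Squaring this and combining with the pairwise bound gives the factor $4/(2\mu_\cA)=2/\mu_\cA$.

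The hardest step is the pairwise/away bound. It requires $x_t$ to be a \emph{proper} convex combination of all atoms in $\cS_t$, so that $\cS_t\in\cS_{x_t}$ and the comparison between $s_t$ and $s_\cA(x_t)$ holds. This is guaranteed because $\cS_t$ is defined as the set of atoms with strictly positive weights. I would check this carefully, including that the minimization over $\cS_x$ in \cref{def:geo-strong-convexity} points in the right direction. I would also verify that positivity of $\mu_\cA$ follows from \citep[Theorem~6]{lacoste2015global} under \cref{ass:sc-f,ass:atom}.
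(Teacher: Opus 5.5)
Your proposal is correct and matches the paper's proof. It uses the same one-step recursion from \cref{lem:basic}\,\ref{lem:basic:GI} with $L_{h_t}\|d_{h_t}\|_2^2\leq LD_{\cA}^2$, handles the two branches of the minimum by \cref{cond:sub}\,\ref{cond:convex} and a quadratic error bound $\Delta_t\leq G\,g_t^2$, and unrolls using $\Delta_0\leq f(x_0)-f(x^\star)$. The only difference is that you derive the error bounds of \cref{lem:quadratic-gap} yourself, whereas the paper cites \citep{pedregosa2020linearly}: projected gradient plus $w_\cA$ for Matching Pursuit, minimizing the $\mu_\cA$-inequality at $y=x^\star$ for pairwise, and $\grad f(x_t)^\top(s_t-v_t)\leq 2\,\grad f(x_t)^\top d_t$ for away steps. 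Both routes give the same constants $G$.
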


The proof of \cref{thm:linear-subroutines} relies on the following subroutine-specific quadratic error bounds. These estimates follow from \citep{pedregosa2020linearly} and \citep{lacoste2015global}; to keep the presentation self-contained, we include the reductions to our notation.

\begin{lemma}
\label{lem:quadratic-gap}
Suppose \cref{ass:smooth-functions,ass:sc-f} hold.
\begin{enumerate}[label=(\roman*)]
    \item \label{lem:quadratic-gap:mp}
    If additionally \cref{ass:mp} holds, $x\in\lin(\cA)$, $v\in\argmin_{u\in\cA}\grad f(x)^\top u$, and $d=-v$, then
    \begin{align*}
        f(x)-f(x^\star)
        \leq
        \frac{\bigl(\grad f(x)^\top d\bigr)^2}{2 \mu w_\cA^2}.
    \end{align*}

    \item \label{lem:quadratic-gap:pairwise}
    If additionally \cref{ass:atom} holds, $x\in\conv(\cA)$, $\cS\in\cS_x$, $v\in\argmin_{u\in\cA}\grad f(x)^\top u$, $s\in\argmax_{u\in\cS}\grad f(x)^\top u$, and $d=s-v$, then
    \begin{align*}
        f(x)-f(x^\star)
        \leq
        \frac{\bigl(\grad f(x)^\top d\bigr)^2}{2\mu_\cA}.
    \end{align*}

    \item \label{lem:quadratic-gap:away}
    If additionally \cref{ass:atom} holds, $x\in\conv(\cA)$, $\cS \in \cS_x$, $v\in\argmin_{u\in\cA}\grad f(x)^\top u$, $s\in\argmax_{u\in\cS}\grad f(x)^\top u$, and $d$ is chosen by the away-step rule, then
    \begin{align*}
        f(x)-f(x^\star)
        \leq
        \frac{2\bigl(\grad f(x)^\top d\bigr)^2}{\mu_\cA}.
    \end{align*}
\end{enumerate}
\end{lemma}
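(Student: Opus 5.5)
We prove the three parts separately. Parts \ref{lem:quadratic-gap:mp} and \ref{lem:quadratic-gap:pairwise} each combine a strong-convexity lower bound with a maximization of a one-dimensional concave quadratic. Part \ref{lem:quadratic-gap:away} is then reduced to \ref{lem:quadratic-gap:pairwise} by a factor-of-two comparison of gaps. Throughout, $g:=\grad f(x)$.

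\textbf{Part \ref{lem:quadratic-gap:mp}.} Let $e:=x^\star-x\in\lin(\cA)$ and let $g_P$ be the orthogonal projection of $g$ onto $\lin(\cA)$.
\begin{itemize}[label=$\diamond$]
    \item Since $e\in\lin(\cA)$, we have $g^\top e=g_P^\top e\geq -\|g_P\|_2\|e\|_2$.
    \item \cref{ass:sc-f} then gives $f(x^\star)\geq f(x)-\|g_P\|_2\|e\|_2+\tfrac{\mu}{2}\|e\|_2^2$.
    \item Maximizing $\|g_P\|_2 u-\tfrac{\mu}{2}u^2$ over $u\geq 0$ yields $f(x)-f(x^\star)\leq \|g_P\|_2^2/(2\mu)$.
    \item If $g_P=0$, this already gives $f(x)-f(x^\star)\le 0$, so assume $g_P\neq 0$.
    \item Applying \cref{def:mdw} with direction $g_P\in\lin(\cA)\setminus\{0\}$ gives $w_\cA\|g_P\|_2\leq \max_{z\in\cA} g_P^\top z$.
    \item Because $\cA\subset\lin(\cA)$, we have $g_P^\top z=g^\top z$ for every $z\in\cA$.
    \item By the symmetry $\cA=-\cA$ in \cref{ass:mp}, $\max_{z\in\cA}g^\top z=-\min_{z\in\cA}g^\top z=-g^\top v=g^\top d$.
\end{itemize}
Substituting $\|g_P\|_2\leq g^\top d/w_\cA$ into the bound gives the claim. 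The only care needed is the identity $g_P^\top z=g^\top z$, which holds because $z\in\lin(\cA)$.

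\textbf{Part \ref{lem:quadratic-gap:pairwise}.} Set $a:=-g^\top(x^\star-x)$. If $a\le 0$, convexity gives $f(x)-f(x^\star)\le -a\le 0$, and the claim is trivial. Otherwise, apply the infimum in \cref{def:geo-strong-convexity} with $y=x^\star$. Writing $g_\cA:=g^\top(s_\cA(x)-v_\cA(x))$, this gives
\begin{align*}
f(x^\star)\geq f(x)-a+\frac{\mu_\cA}{2}\frac{a^2}{g_\cA^2}.
\end{align*}

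This step requires $g_\cA>0$, which is where I expect the most care to be needed. We argue as follows.
\begin{itemize}[label=$\diamond$]
    \item Since $x^\star\in\conv(\cA)$, we have $a\leq g^\top(x-v_\cA(x))$.
    \item Since $x$ is a convex combination of the atoms of any $\cS'\in\cS_x$, we have $g^\top x\leq \max_{u\in\cS'} g^\top u$ for every such $\cS'$. In particular, $g^\top x\le g^\top s_\cA(x)$.
    \item Hence $0<a\leq g_\cA$.
\end{itemize}
Maximizing $a-\mu_\cA a^2/(2g_\cA^2)$ over $a$ then gives $f(x)-f(x^\star)\leq g_\cA^2/(2\mu_\cA)$.

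Finally, $s_\cA(x)$ minimizes $g^\top s$ over the active maximizers of all supports in $\cS_x$, so $g^\top s_\cA(x)\leq g^\top s$ for our $\cS$ and $s$. We also have $g^\top v_\cA(x)=g^\top v$. Therefore $0<g_\cA\leq g^\top(s-v)=g^\top d$, and squaring gives the claim.

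\textbf{Part \ref{lem:quadratic-gap:away}.} We have $g^\top(s-v)=g^\top(x-v)+g^\top(s-x)$. The away-step rule selects the direction with the larger of these two terms, so $g^\top d\geq \tfrac12 g^\top(s-v)$. Both terms are nonnegative: $g^\top(x-v)\ge 0$ because $v$ minimizes over $\cA$, and $g^\top(s-x)\ge 0$ because $s$ maximizes over $\cS$ and $x$ is a convex combination of $\cS$. Squaring gives $(g^\top(s-v))^2\leq 4(g^\top d)^2$. Combining with part \ref{lem:quadratic-gap:pairwise} yields $f(x)-f(x^\star)\leq 2(g^\top d)^2/\mu_\cA$.
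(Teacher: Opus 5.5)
Your proof is correct. It shares the paper's skeleton: bound the gap by a canonical quantity, then compare with the algorithm's direction using $\grad f(x)^\top s_\cA(x)\le \grad f(x)^\top s$ and $v=v_\cA(x)$.

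The difference lies in where the canonical bounds come from. The paper imports them from \citep[Lemma~9 and Theorem~4.A]{pedregosa2020linearly}, and for (i) it goes through the dual atomic norm $\|\grad_\cA f(x)\|_{\cA^*}$. You derive them from first principles:
\begin{itemize}
\item In (i), you work directly with $\|g_P\|_2$ and apply the definition of $w_\cA$ to $g_P$. This amounts to the inequality $\|u\|_{\cA^*}\ge w_\cA\|u\|_2$ for $u\in\lin(\cA)$.
\item In (ii), you instantiate the infimum defining $\mu_\cA$ at $y=x^\star$. Along the way you check that $0<a\le \grad f(x)^\top(s_\cA(x)-v_\cA(x))$, so $\gamma_\cA(x^\star,x)$ is well defined. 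The paper leaves that point to the reference.
\item For (iii), your path is genuinely shorter. The paper quotes a separate away-step bound in terms of $\max\{\grad f(x)^\top(s_\cA(x)-x),\,\grad f(x)^\top(x-v_\cA(x))\}$. You instead deduce (iii) from (ii) via $\max\{p,q\}\ge (p+q)/2$. This also shows why the away-step constant $2/\mu_\cA$ is exactly four times the pairwise one.
\end{itemize}
Your version buys self-containedness and explicit handling of the degenerate cases $g_P=0$ and $a\le 0$. The paper's version buys brevity.

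One slip to fix: when $a\le 0$, convexity gives $f(x)-f(x^\star)\le a\le 0$, not $f(x)-f(x^\star)\le -a\le 0$.

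You should also state explicitly that you use $w_\cA>0$ and $\mu_\cA>0$. The first follows from symmetry and compactness of $\cA$ and is needed when dividing in (i). The second is needed when maximizing the concave quadratic in $a$ in (ii).
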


\begin{proof}[Proof of \cref{lem:quadratic-gap}]
    For \ref{lem:quadratic-gap:mp}, let $\grad_{\cA}f(x)$ denote the orthogonal projection of $\grad f(x)$ onto $\lin(\cA)$, and let $\|\cdot\|_{\cA^*}$ be the dual norm of the atomic norm $\|\cdot\|_{\cA}$. By \citep[Lemma 9]{pedregosa2020linearly}, we have $f(x)-f(x^\star) \leq \frac{\|\grad_{\cA}f(x)\|_{\cA^*}^2}{2 \mu w_\cA^2}.$
    Since $x\in\lin(\cA)$ and $v\in\cA\subseteq\lin(\cA)$, we have
    \begin{align*}
        \grad f(x)^\top v=\grad_{\cA}f(x)^\top v.
    \end{align*}
    Moreover, under \cref{ass:mp} the dictionary is symmetric, so
    \begin{align*}
        \|\grad_{\cA}f(x)\|_{\cA^*}
        =
        \max_{\|z\|_{\cA}\leq 1}\grad_{\cA}f(x)^\top z
        =
        -\min_{v\in\cA}\grad_{\cA}f(x)^\top v
        =
        -\min_{v\in\cA}\grad f(x)^\top v
        =
        \grad f(x)^\top d.
    \end{align*}
    Substituting this identity into the previous inequality proves \ref{lem:quadratic-gap:mp}.
    For \ref{lem:quadratic-gap:pairwise}, \citep[Theorem 4.A]{pedregosa2020linearly} gives
    \begin{align*}
        f(x)-f(x^\star)
        \leq
        \frac{\bigl(\grad f(x)^\top (s_{\cA}(x)-v_{\cA}(x))\bigr)^2}{2\mu_\cA}.
    \end{align*}
    Since $\cS\in\cS_x$, the current active set is one admissible choice in the definition of $s_{\cA}(x)$. Hence, $\grad f(x)^\top s_{\cA}(x)\leq \grad f(x)^\top s$.
    Also, by definition, $v=v_{\cA}(x)$. Therefore,
    \begin{align*}
        \grad f(x)^\top (s_{\cA}(x)-v_{\cA}(x))
        \leq
        \grad f(x)^\top (s-v)
        =
        \grad f(x)^\top d.
    \end{align*}
    This proves \ref{lem:quadratic-gap:pairwise}.
    For \ref{lem:quadratic-gap:away}, \citep[Theorem 4.A]{pedregosa2020linearly} also yields
    \begin{align*}
        f(x)-f(x^\star)
        \leq
        \frac{2\left(\max\left\{
            \grad f(x)^\top (s_{\cA}(x)-x),
            \grad f(x)^\top (x-v_{\cA}(x))
        \right\}\right)^2}{\mu_\cA}.
    \end{align*}
    Since $\cS\in\cS_x$, we again have $\grad f(x)^\top (s_{\cA}(x)-x) \leq \grad f(x)^\top (s-x)$, while $v=v_{\cA}(x)$. Hence,
    \begin{align*}
        \max\left\{
            \grad f(x)^\top (s_{\cA}(x)-x),
            \grad f(x)^\top (x-v_{\cA}(x))
        \right\}
        \leq
        \max\left\{
            \grad f(x)^\top (s-x),
            \grad f(x)^\top (x-v)
        \right\}
        =
        \grad f(x)^\top d.
    \end{align*}
    Substituting this into the previous inequality proves \ref{lem:quadratic-gap:away}.
\end{proof}

Armed with \cref{lem:quadratic-gap}, we are ready to prove \cref{thm:linear-subroutines}.

\begin{proof}[Proof of \cref{thm:linear-subroutines}]
    Define
    \begin{align*}
        \Delta_t:=f(x_{h_t})-f(x^\star),
        \qquad
        g_t:=\grad f(x_{h_t})^\top d_{h_t}.
    \end{align*}
    Since $h_t\in\cG\cap\cI_\eta$, \cref{lem:basic}\,\ref{lem:basic:GI} gives
    \begin{align*}
        f(x_{h_t+1})
        \leq
        f(x_{h_t})
        -\left(1-\frac{\eta}{2}\right)
        \min\left\{
            1,
            \frac{g_t}{L_{h_t}\|d_{h_t}\|_2^2}
        \right\}g_t.
    \end{align*}
    Since the objective values are nonincreasing and $h_{t+1}>h_t$, we have
    \begin{align*}
        \Delta_{t+1}
        =
        f(x_{h_{t+1}})-f(x^\star)
        \leq
        f(x_{h_t+1})-f(x^\star)
        \quad \implies \quad
        \Delta_{t+1}
        \leq
        \Delta_t
        -\left(1-\frac{\eta}{2}\right)
        \min\left\{
            1,
            \frac{g_t}{L_{h_t}\|d_{h_t}\|_2^2}
        \right\}g_t.
    \end{align*}
    By \cref{lemma:Lipschitz} and \cref{cond:sub}\,\ref{cond:d_t}, $L_{h_t} \leq L$ and $\|d_{h_t}\|_2 \leq D_{\cA}$. Thus, we have
    \begin{align*}
        \Delta_{t+1}
        \leq
        \Delta_t
        -\left(1-\frac{\eta}{2}\right)
        \min\left\{
            1,
            \frac{g_t}{L D_{\cA}^2}
        \right\}g_t.
    \end{align*}
    By \cref{cond:sub}\,\ref{cond:convex}, $g_t \geq {\Delta_t}/{R}$.
    On the other hand, by \cref{lem:quadratic-gap}, we have $\Delta_t \leq G g_t^2$.
    Hence, $g_t^2 \geq \frac{\Delta_t}{G}$ and we obtain
    \begin{align*}
        \Delta_{t+1}
        &\leq
        \max\left\{
            1-\left(1-\frac{\eta}{2}\right)\frac{1}{R},
            1-\left(1-\frac{\eta}{2}\right)\frac{1}{G L D_{\cA}^2}
        \right\}\Delta_t.
    \end{align*}
    Iterating this recursion yields
    \begin{align*}
        \Delta_t
        \leq
        \Delta_0
        \max\left\{
            1-\left(1-\frac{\eta}{2}\right)\frac{1}{R},
            1-\left(1-\frac{\eta}{2}\right)\frac{1}{G L D_{\cA}^2}
        \right\}^t.
    \end{align*}
    Since $h_0\geq 0$ and the objective values are nonincreasing, $\Delta_0 = f(x_{h_0})-f(x^\star) \leq f(x_0)-f(x^\star)$.
    Substituting this bound into the previous inequality concludes the proof.
\end{proof}

\section{Numerical Results}
\label{sec:numerical}
We compare variants of Frank-Wolfe methods across five problem classes, organized into four algorithmic families: the closed-loop Frank-Wolfe method (\texttt{CFW}), the away-step Frank-Wolfe method (\texttt{AFW}), the pairwise Frank-Wolfe method (\texttt{PFW}), and the Matching Pursuit method (\texttt{MP}). For each family, we evaluate three stepsize rules: the classic variant that uses the global Lipschitz constant of the gradient (no prefix), the backtracking line-search in \citep{pedregosa2020linearly} (prefix \texttt{B-}), and our auto-conditioned scheme proposed in \cref{alg:ac-fw} (prefix \texttt{AC-}). For reference, we also include the open-loop Frank-Wolfe method (\texttt{OFW}), corresponding to the Frank-Wolfe subroutine with $\gamma_t = 2/(t+2)$ in convex setting and $\gamma_t = 1 / \sqrt{t+1}$.
In the following, we first describe the problem formulations and datasets and then present the numerical results. To ensure reproducibility, all source codes are made available at \url{https://github.com/brucegiang/AC-FW}.

\paragraph{Dictionary Learning.}
The goal of dictionary learning is to find a compact representation of a dataset $A = \{a_1, \dots, a_n\} \subseteq \R^m$. Formally, we seek a dictionary $D = [d_1 \cdots d_p] \in \R^{m \times p}$ such that each data point $a_i$ is well approximated by a linear combination of the columns of $D$. This leads to the problem
\begin{align*}
    \min_{D, X} \left\{ \frac{1}{2n} \sum_{i \in [n]} \|a_i - D x_i\|_2^2 :
    \begin{array}{l}
        D \in \R^{m \times p}, ~ X = [x_1 \cdots x_n] \in \R^{p \times n}, \\[1ex]
        \|d_j\|_2 \leq 1, ~ \forall j \in [p], \quad \|x_i\|_1 \leq \beta, ~ \forall i \in [n]
    \end{array}
     \right\},
\end{align*}
where $X$ is the coefficient matrix. We generate the data as follows. We set $n = 1000$, $m = 500$, and $p = 200$, and construct the dataset matrix $A = B C \in \R^{m \times n}$, where $B \in \R^{m \times p}$ has entries drawn i.i.d.\ from the standard Gaussian distribution with columns rescaled to unit $\ell_2$-norm, and $C = \tfrac{1}{\|U\|_2 \|V\|_2} U V^\top$ with $U \in \R^{p \times l}$ and $V \in \R^{n \times l}$ also drawn i.i.d.\ from the standard Gaussian distribution and $l = 50$. We test $\beta \in \{5, 10, 15\}$. The data generation follows the procedure in \citep{boroun2023projection,giang2026projection}. We implement all variants of the \texttt{CFW} algorithm, along with \texttt{OFW}, in the nonconvex setting.

\paragraph{Constrained Regularized Logistic Regression Problem.}
In a binary classification setting, we are given a training dataset $\cD^{\text{tr}} = \{(a_i, b_i)\}_{i=1}^n$, where $a_i \in \R^d$ is the feature vector and $b_i \in \{0,1\}$ is the class label. The objective is to learn a predictor $x \in \R^d$ such that, for a new instance $\hat a$, the predicted label is $\hat b = 1$ if $\hat a^\top x \geq 0$ and $\hat b = 0$ otherwise. We consider the following optimization problem
\begin{equation*} \label{eq:l1-logist}
    \min_{\|x\|_1 \leq \beta} ~ \frac{1}{n} \sum_{i \in [n]} \ell(a_i^\top x, b_i) + \frac{\lambda}{2} \|x\|_2^2,
\end{equation*}
where for any $y \in \R$ and $b \in \{0, 1\}$, the logistic loss is given by $\ell(y, b) := \log(1 + e^y) - b y$.
Following \citep{pedregosa2020linearly}, we evaluate our algorithms on the \texttt{RCV1.binary} dataset from the LIBSVM repository \citep{chang2011libsvm} and the \texttt{Madelon} dataset from the UCI repository \citep{madelon_171}, and we set $\lambda = 0.01$ and $\beta \in \{5, 10, 15\}$. We implement all variants of \texttt{CFW}, \texttt{PFW}, and \texttt{AFW}, along with \texttt{OFW}, in the convex setting with a finite dictionary.

\paragraph{Unconstrained Regularized Logistic Regression Problem.}
To evaluate the Matching Pursuit subroutine in \cref{alg:mp-sub}, we also consider the following optimization problem
\begin{equation*} 
    \min_{x \in \lin(\cA)} ~ \frac{1}{n} \sum_{i \in [n]} \ell(a_i^\top x, b_i) + \frac{\lambda}{2} \|x\|_2^2 ,
\end{equation*}
where $\cA = \{x \in \R^d : \|x\|_1 \leq \beta\}$ and $\ell$ denotes the logistic loss. Following \citep{pedregosa2020linearly}, we use the same datasets and parameter settings as in the constrained case. We implement all variants of \texttt{MP} in the convex setting.

\paragraph{Constrained Huber Regression Problem.}
We consider the collaborative filtering problem \citep{mehta2007robust}
\begin{align*}
    \min_{\|X\|_* \leq \beta} ~ \frac{1}{|\cI|} \sum_{(i, j) \in \cI} \ell_\delta(A_{ij} - X_{ij}),
\end{align*}
where $\cI \subseteq [m] \times [p]$ is the set of observed entries of $A$, and $\ell_\delta$ denotes the Huber loss with parameter $\delta > 0$, defined as $\ell_\delta(a) := \tfrac{1}{2} a^2$ if $|a| \leq \delta$ and $\ell_\delta(a) := \delta(|a| - \tfrac{1}{2}\delta)$ otherwise. Following \citep{pedregosa2020linearly}, we use the \texttt{MovieLens} dataset \citep{Harperetal2015} with 1M movie ratings, and set $\delta = 1$ and $\beta \in \{10000, 20000\}$. We implement all variants of \texttt{CFW}, along with \texttt{OFW}, in the convex setting with an infinite dictionary.

\paragraph{Distributionally Robust Minimum Mean Square Error Estimation.}
We consider the distributionally robust minimum mean square error estimation problem studied in \citep{shafieezadeh2018wasserstein,nguyen2023bridging}, which seeks an estimator of a signal $x \in \R^n$ from an observation $y \in \R^m$ under distributional uncertainty modeled by a Wasserstein ambiguity set around a nominal Gaussian distribution $\bbP = \cN_d(0, \Sigma)$ on $\R^d$, where $d = n + m$. As shown in \citep[Theorem 2.5]{shafieezadeh2018wasserstein}, the resulting minimax problem admits the convex reformulation
\begin{align*}
    \max \left\{ 
    \Tr\left[ S_{xx} - S_{xy} S_{yy}^{-1} S_{yx} \right] : S = \begin{bmatrix} S_{xx} & S_{xy} \\ S_{yx} & S_{yy} \end{bmatrix} \in \bbS_+^d, \ \Tr\left[ S + \Sigma - 2 \left( \Sigma^{1/2} S \Sigma^{1/2} \right)^{1/2} \right] \leq \rho^2
    \right\},
\end{align*}
where $\rho > 0$ is the Wasserstein radius.
Let $S^\star$ denote the optimizer of the above problem. Given output $y$, the prediction is $\hat x = S^\star_{yx} (S_{yy}^\star)^{-1} y$. Maybe surprising, the linear minimization oracle over the feasible set admits a quasi-closed solution due to \citep[Theorem~3.2]{shafieezadeh2018wasserstein}.

We generate the data following \citep[Section 5.1]{shafieezadeh2018wasserstein}. We aim to predict a signal $x \in \R^{4d/5}$ from an observation $y \in \R^{d/5}$, with $d \in \{100, 1000\}$. 
We draw a matrix $A \in \R^{d \times d}$ with i.i.d.\ standard Gaussian entries, and let $R$ denotes the orthogonal matrices whose columns are the orthonormal eigenvectors of $A + A^\top$. We then define $\Sigma = R \Lambda R^\top$, where $\Lambda$ is a diagonal matrix with entries drawn uniformly from $[0.1, 10]^d$. The Wasserstein radius is set to $\rho = \sqrt{d}$.  We implement all variants of \texttt{CFW}, along with \texttt{OFW}, in the convex setting with an infinite dictionary.

\paragraph{Summary of Numerical Results.}
Figures~\ref{fig:Experiment1} reports the Frank-Wolfe gap as a function of wall-clock time for all variants across the five problem classes described above. Across nearly all instances, the auto-conditioned variants (\texttt{AC-}) match or outperform their backtracking counterparts (\texttt{B-}), and both consistently improve on the classic variants that rely on a global Lipschitz constant and on the open-loop \texttt{OFW} baseline. The advantage of \texttt{AC-} over \texttt{B-} is most pronounced when objective evaluations are expensive. For example, on the dictionary learning, \texttt{MovieLens}, and distributionally robust MMSE estimation problems, \texttt{AC-CFW} reaches gap levels several orders of magnitude smaller than \texttt{B-CFW} within the same time budget, reflecting the per-iteration savings from avoiding repeated function evaluations during line search. On the logistic regression problems, where function evaluations are comparatively cheap, the gap between \texttt{AC-} and \texttt{B-} narrows, but the auto-conditioned variants remain competitive across all subroutines and choices of $\beta$. The plain variants and \texttt{OFW}, which do not adapt to local curvature, lag substantially in most instances and in some cases stall well above the levels reached by the adaptive methods. Overall, these results indicate that the auto-conditioned scheme delivers the benefits of adaptivity while retaining the simplicity and low per-iteration cost of closed-loop step-size rules. To ensure reproducibility, all
source codes are made available at \url{https://github.com/brucegiang/AC-FW}.

\newpage
\begin{figure*}[!h]
    \centering
    \begin{subfigure}[b]{0.28\textwidth}
        \centering
        \includegraphics[width=\linewidth]{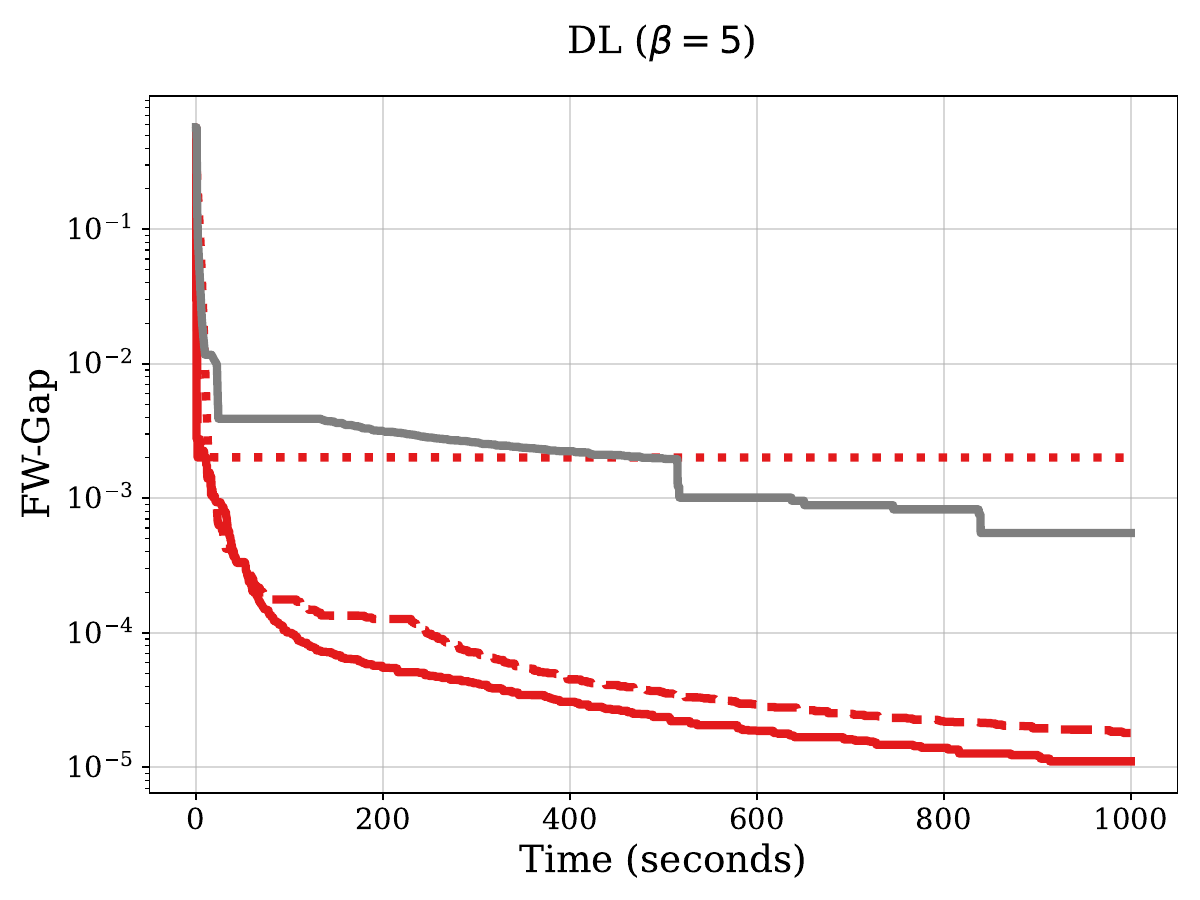}
        \label{subfig:dl5}
    \end{subfigure}
    ~
    \begin{subfigure}[b]{0.28\textwidth}
        \centering
        \includegraphics[width=\linewidth]{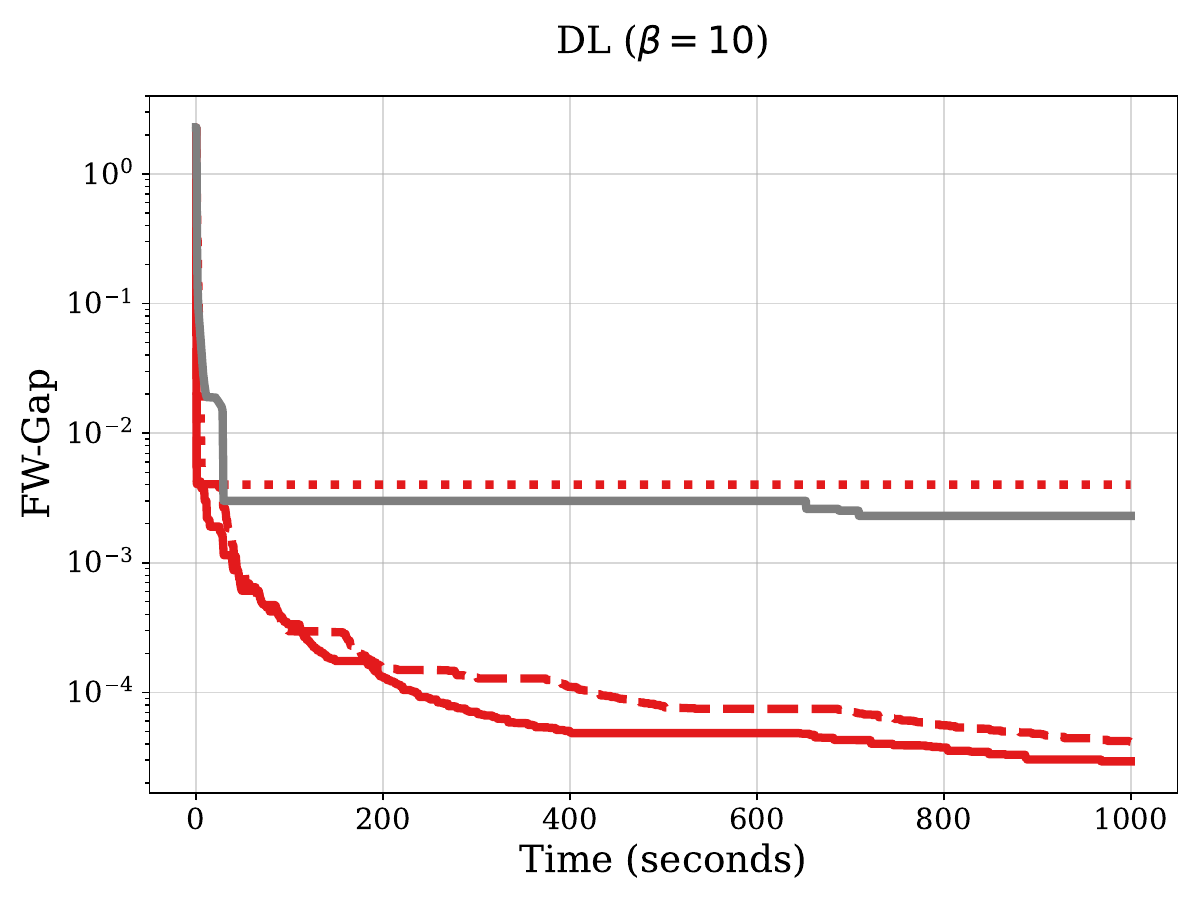}
        \label{subfig:dl10}
    \end{subfigure}
    ~
    \begin{subfigure}[b]{0.28\textwidth}
        \centering
        \includegraphics[width=\linewidth]{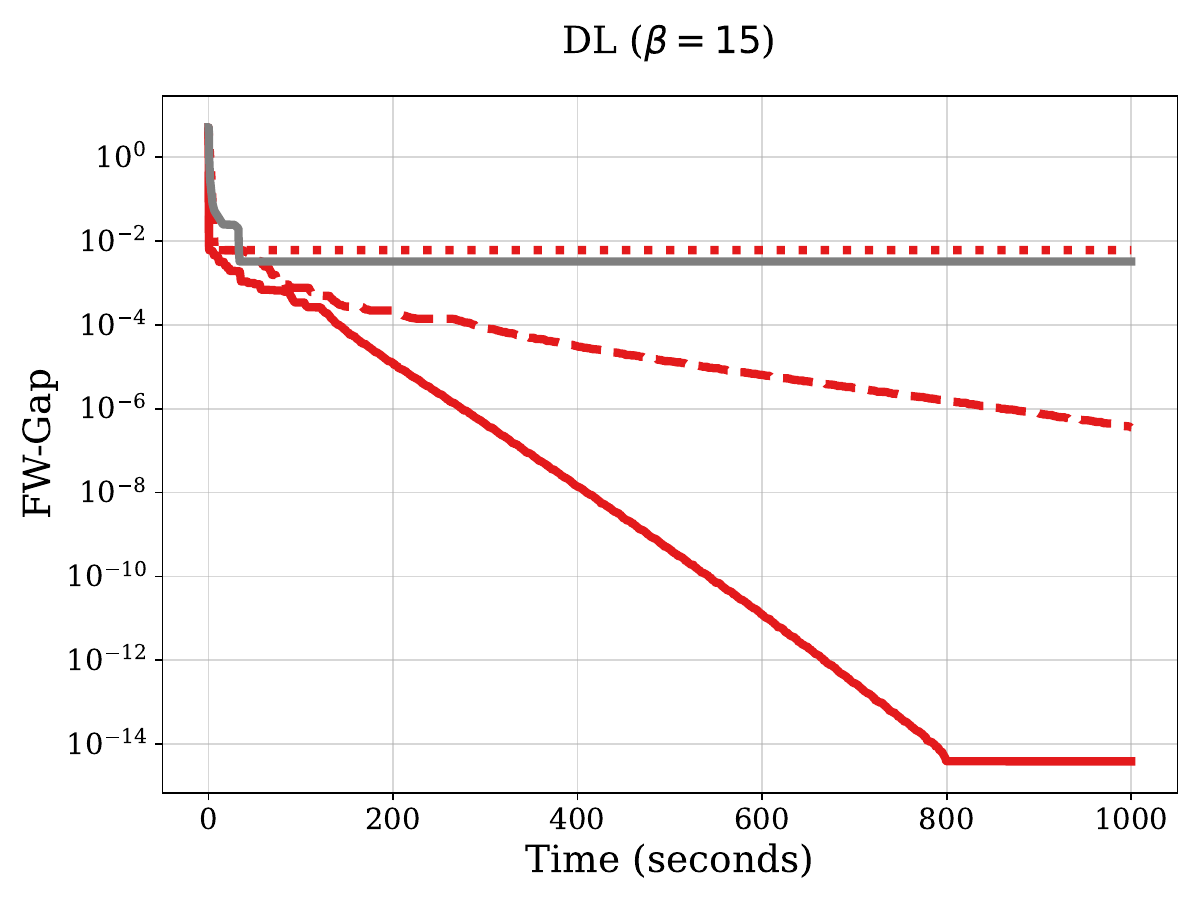}
        \label{subfig:dl15}
    \end{subfigure}

    \vspace{-1em}
    \begin{subfigure}[b]{0.28\textwidth}
        \centering
        \includegraphics[width=\linewidth]{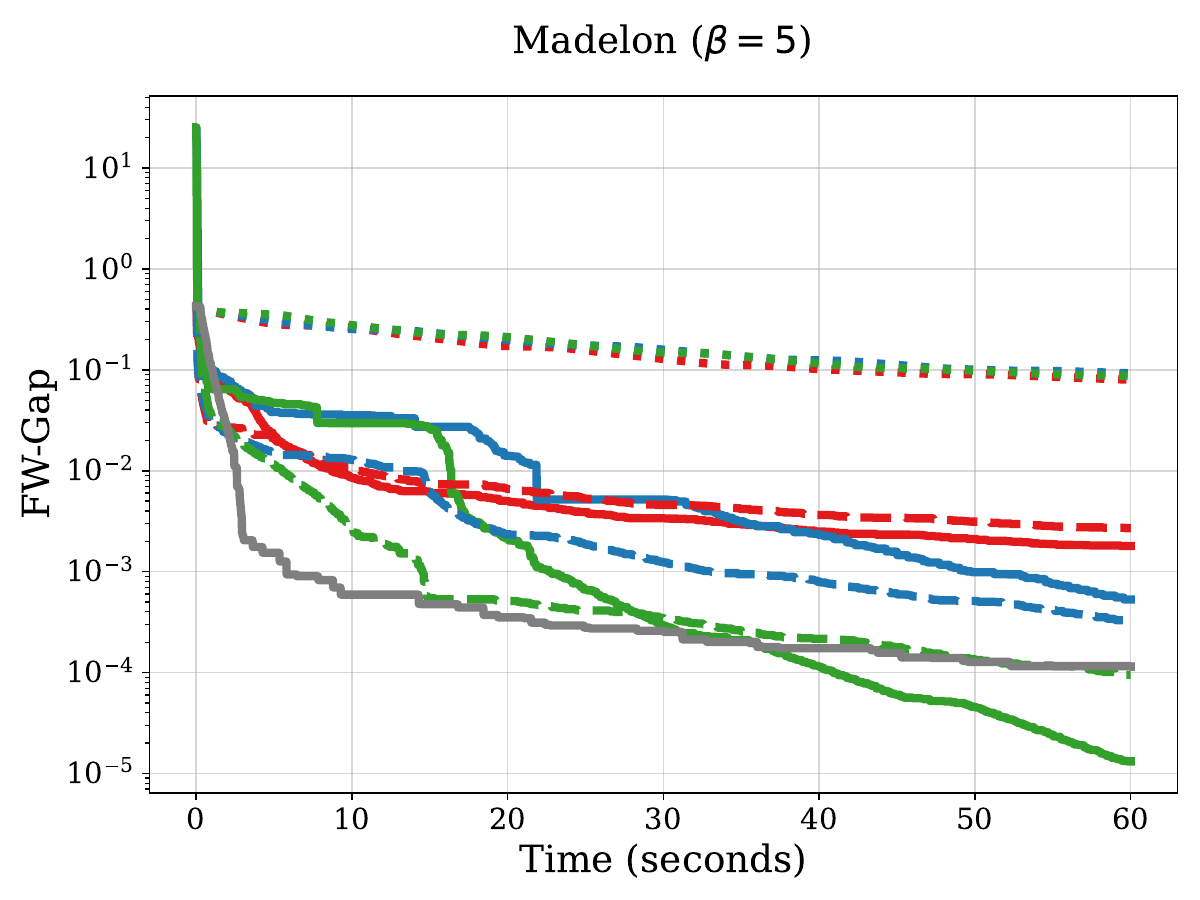}
        \label{subfig:madelon5}
    \end{subfigure}
    ~
    \begin{subfigure}[b]{0.28\textwidth}
        \centering
        \includegraphics[width=\linewidth]{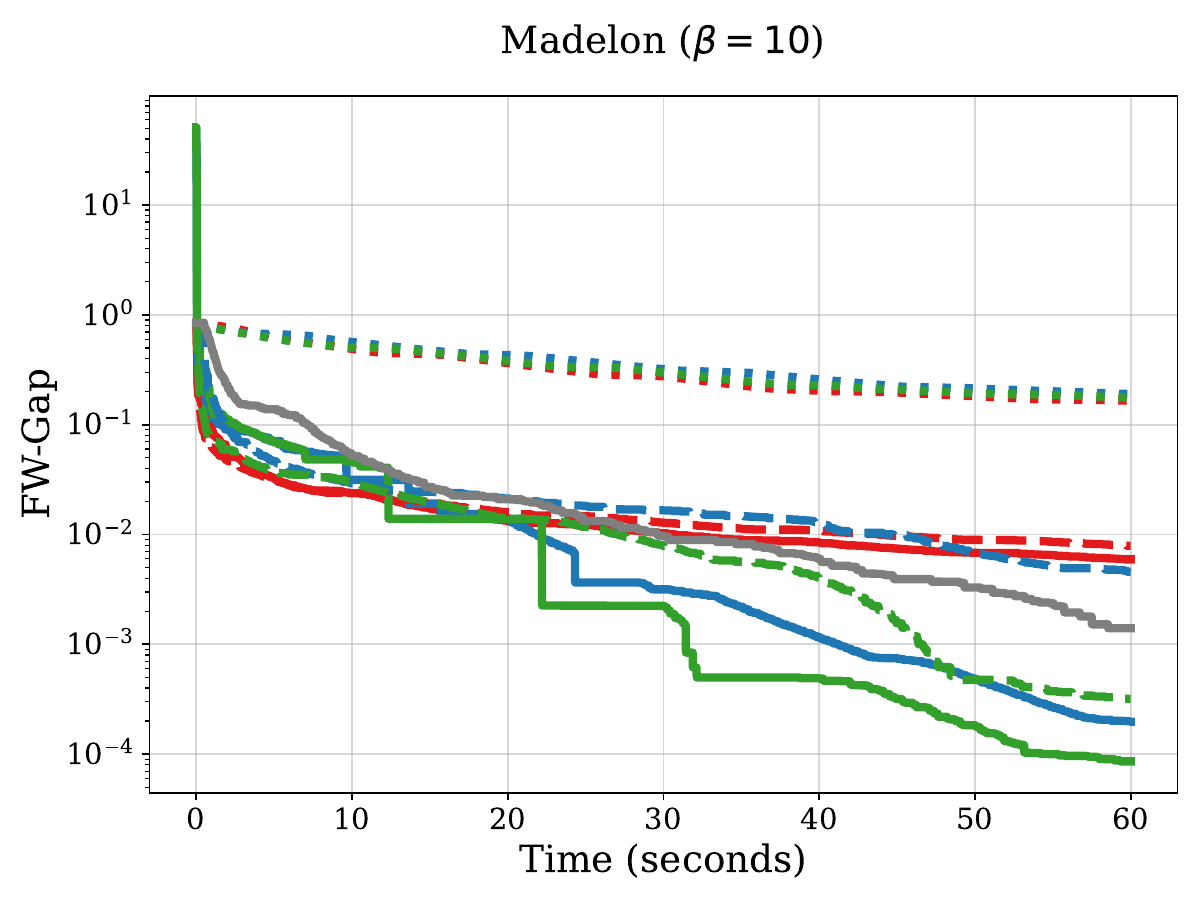}
        \label{subfig:madelon10}
    \end{subfigure}
    ~
    \begin{subfigure}[b]{0.28\textwidth}
        \centering
        \includegraphics[width=\linewidth]{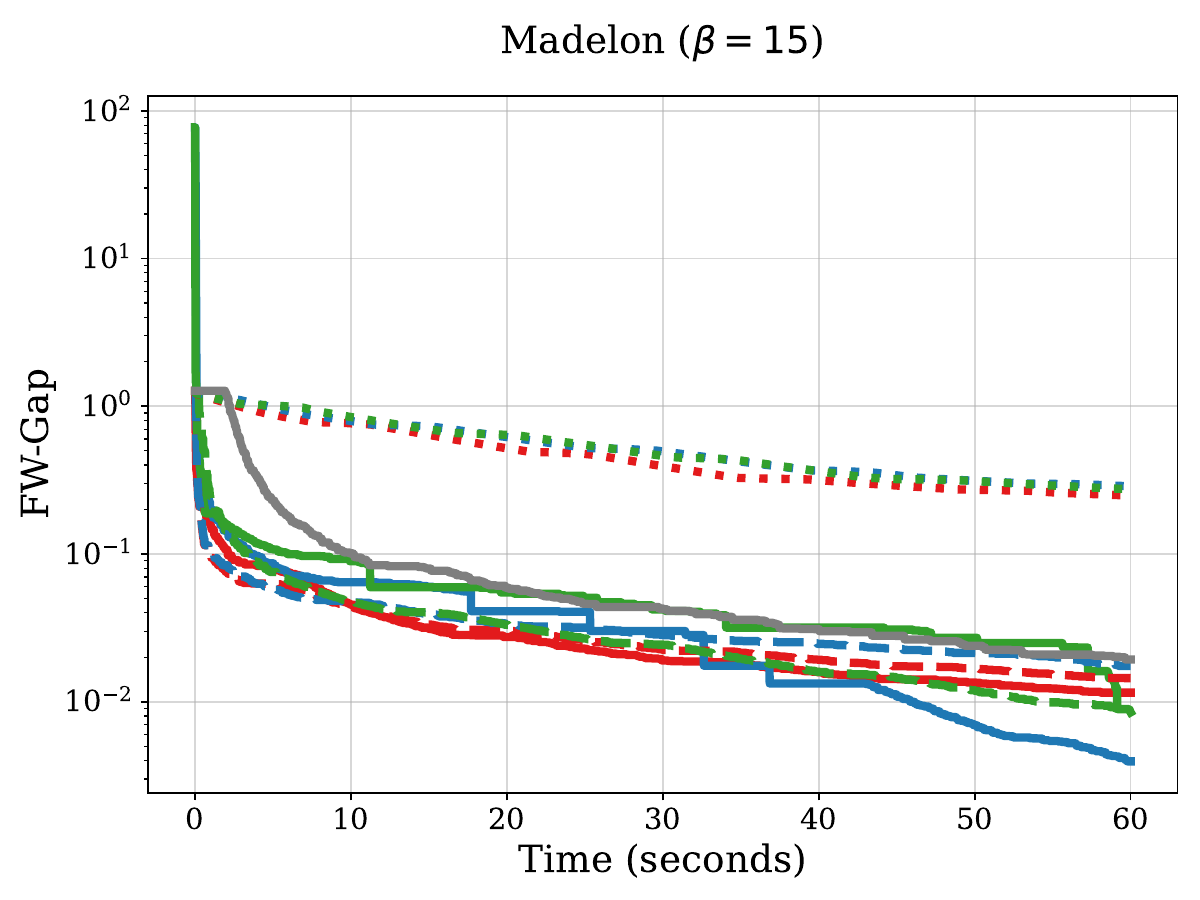}
        \label{subfig:madelon15}
    \end{subfigure}

    \vspace{-1em}
    \begin{subfigure}[b]{0.28\textwidth}
        \centering
        \includegraphics[width=\linewidth]{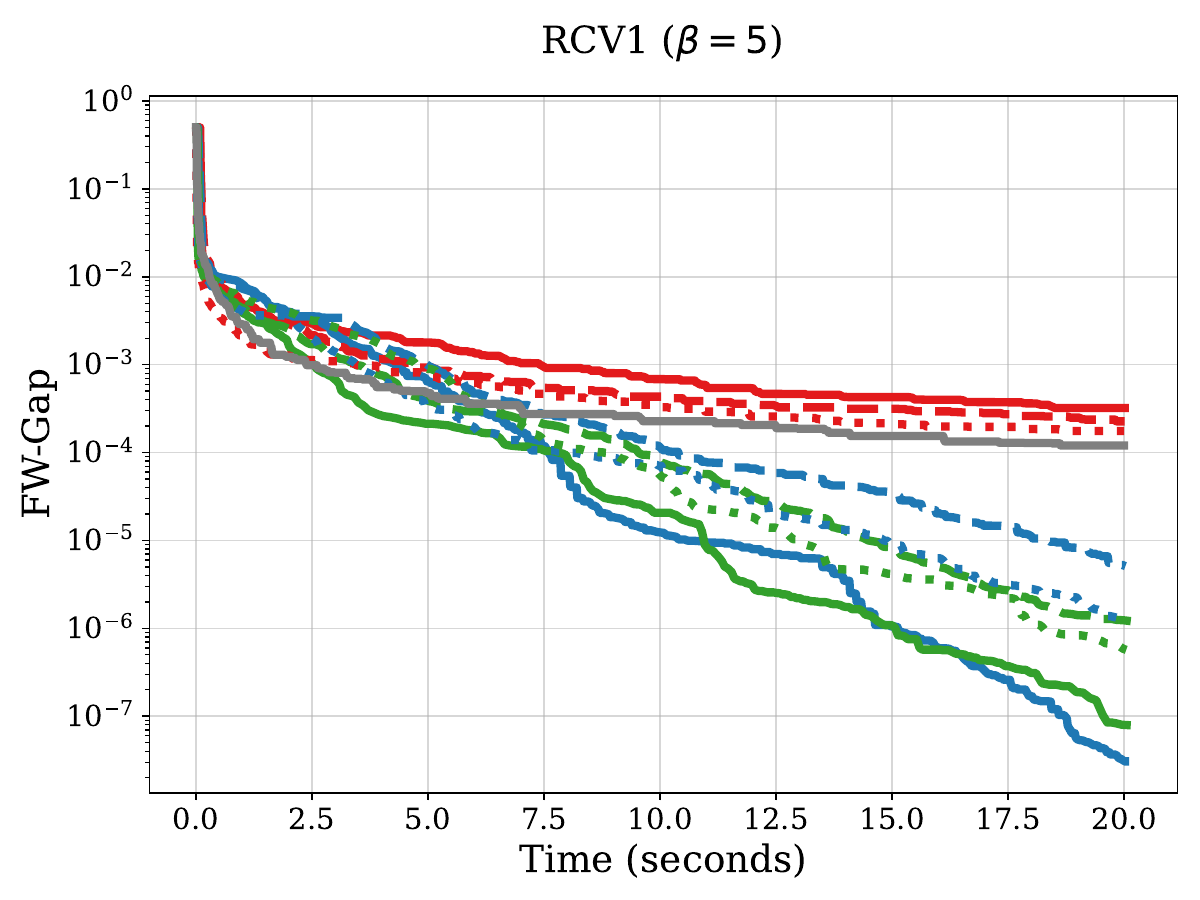}
        \label{subfig:rcv15}
    \end{subfigure}
    ~
    \begin{subfigure}[b]{0.28\textwidth}
        \centering
        \includegraphics[width=\linewidth]{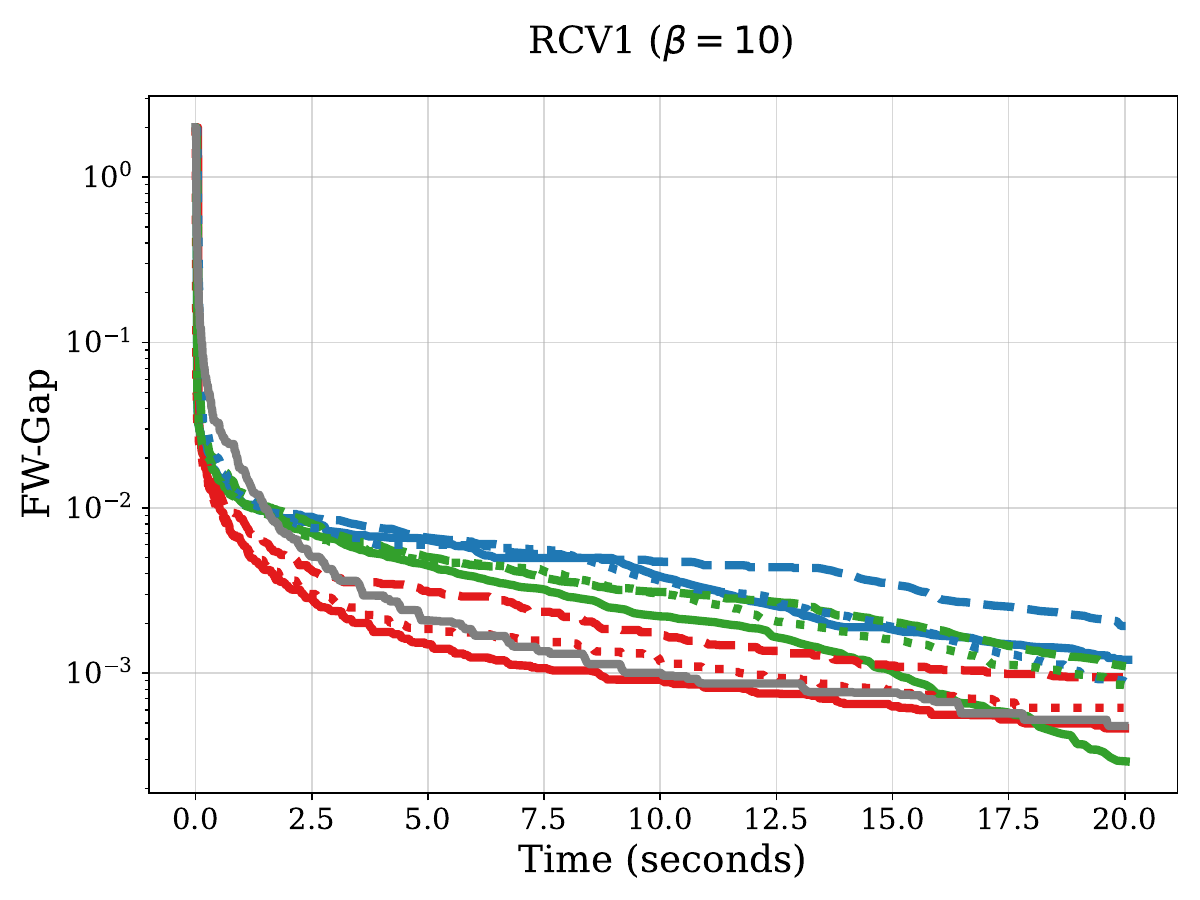}
        \label{subfig:rcv110}
    \end{subfigure}
    ~
    \begin{subfigure}[b]{0.28\textwidth}
        \centering
        \includegraphics[width=\linewidth]{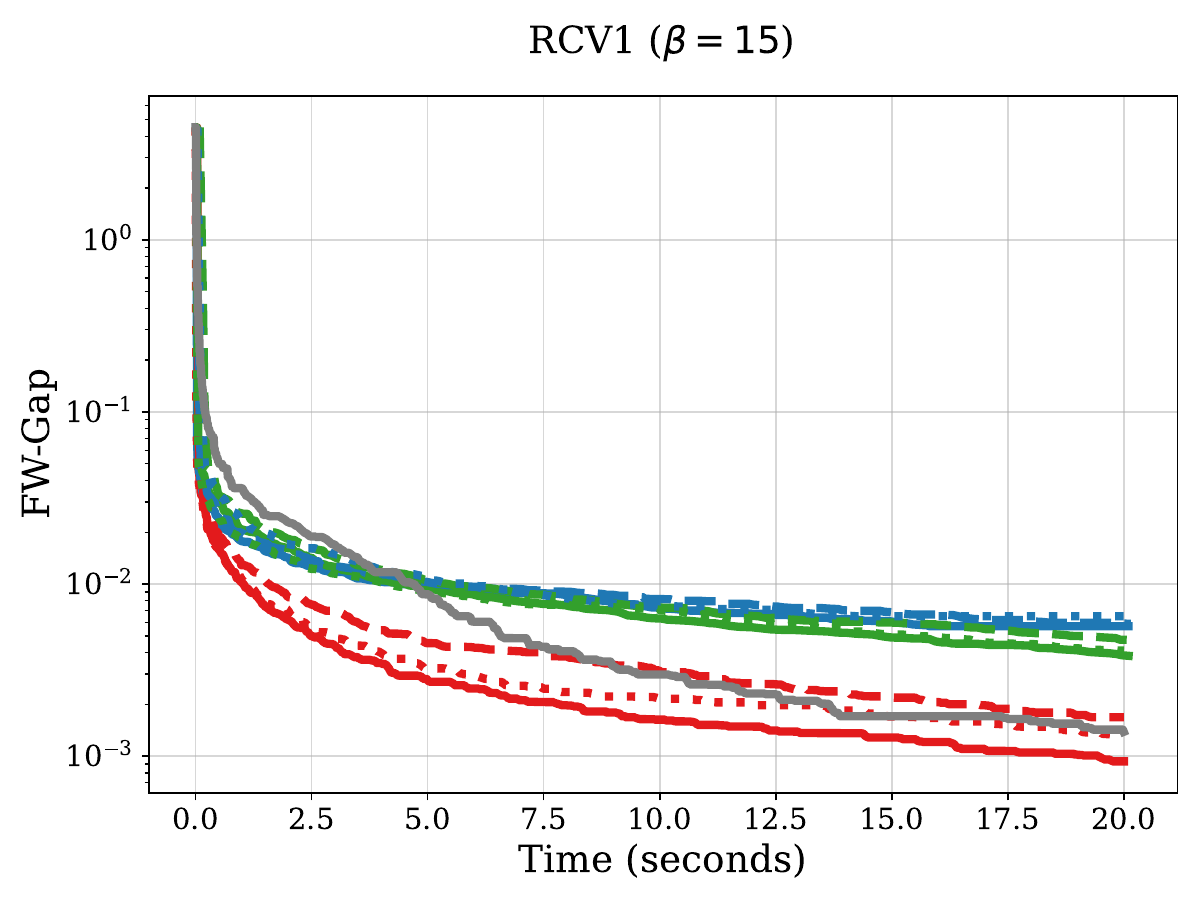}
        \label{subfig:rcv115}
    \end{subfigure}

    \vspace{-1em}
    \begin{subfigure}[b]{0.28\textwidth}
        \centering
        \includegraphics[width=\linewidth]{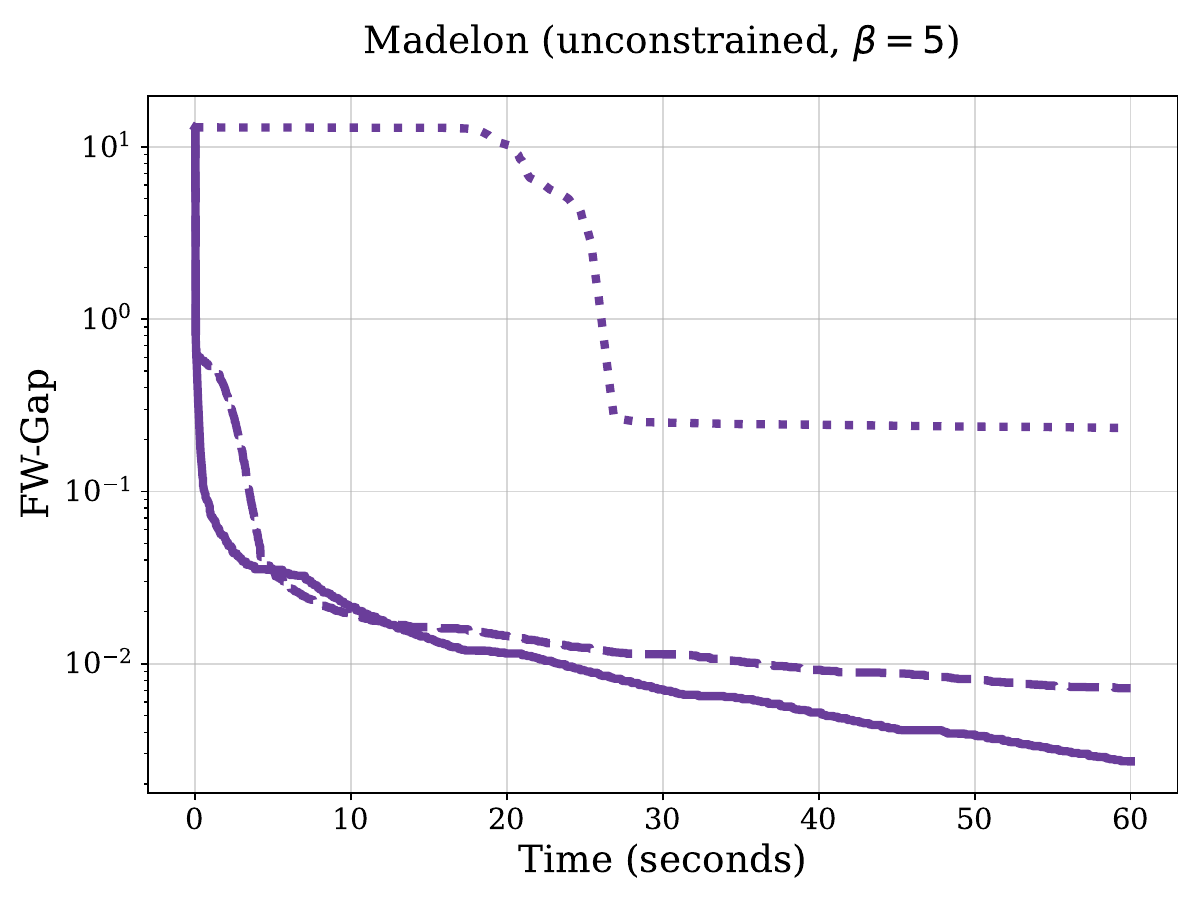}
        \label{subfig:un-madelon5}
    \end{subfigure}
    ~
    \begin{subfigure}[b]{0.28\textwidth}
        \centering
        \includegraphics[width=\linewidth]{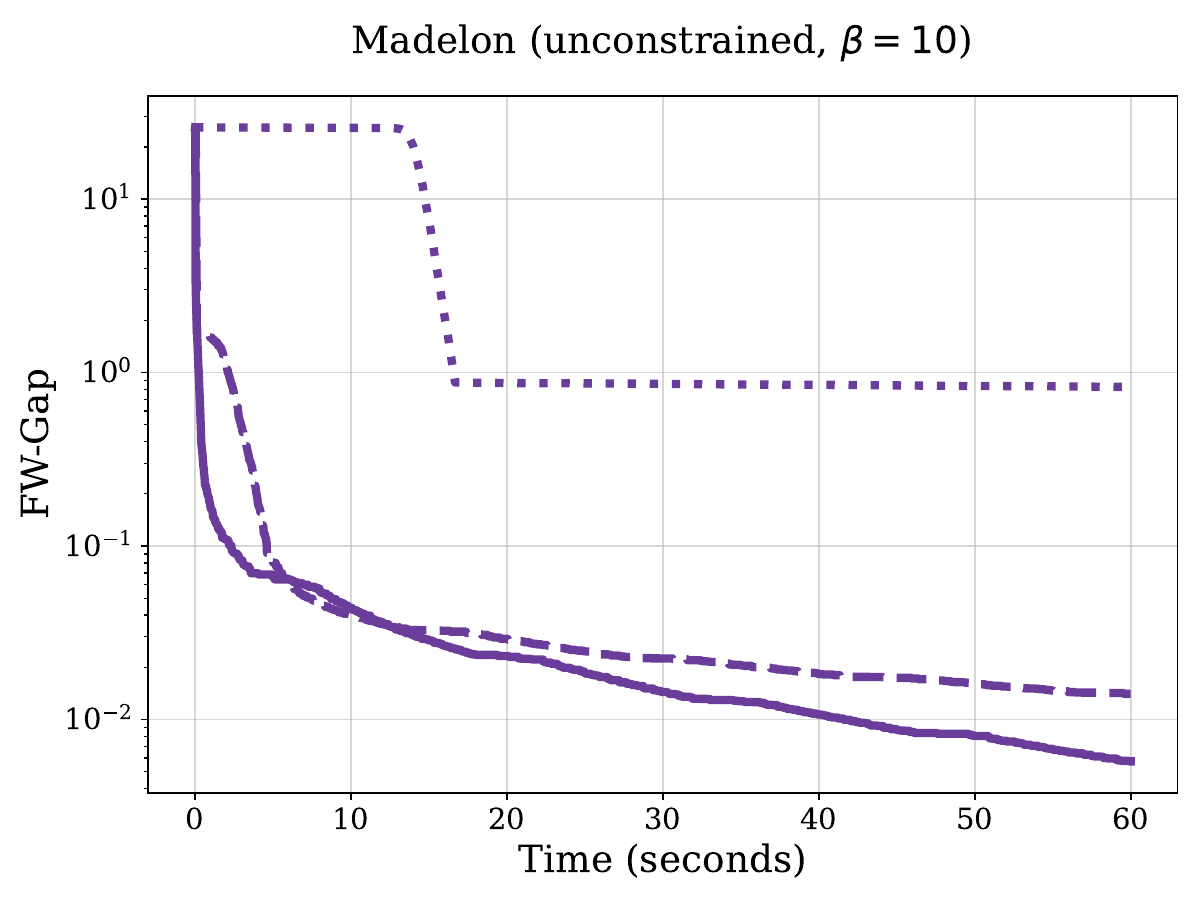}
        \label{subfig:un-madelon10}
    \end{subfigure}
    ~
    \begin{subfigure}[b]{0.28\textwidth}
        \centering
        \includegraphics[width=\linewidth]{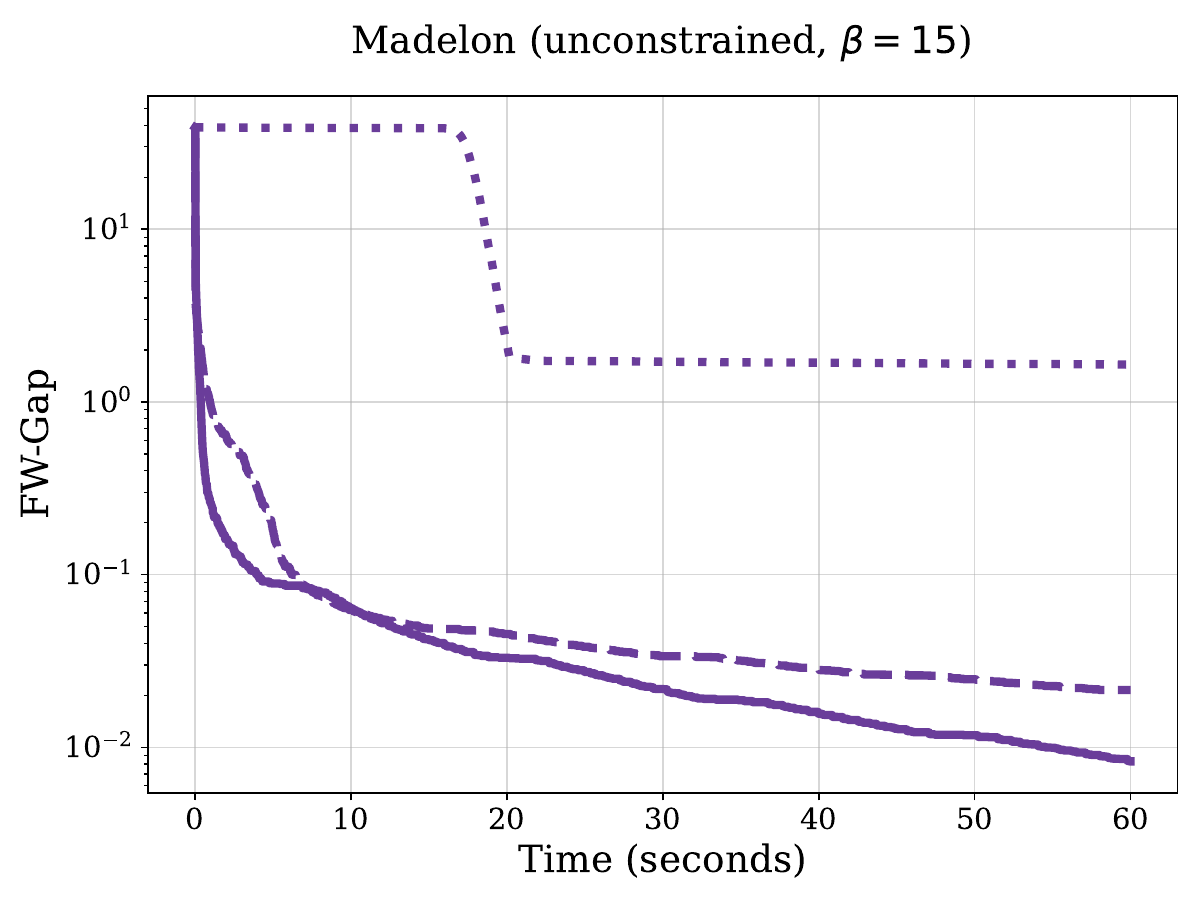}
        \label{subfig:un-madelon15}
    \end{subfigure}

    \vspace{-1em}
    \begin{subfigure}[b]{0.28\textwidth}
        \centering
        \includegraphics[width=\linewidth]{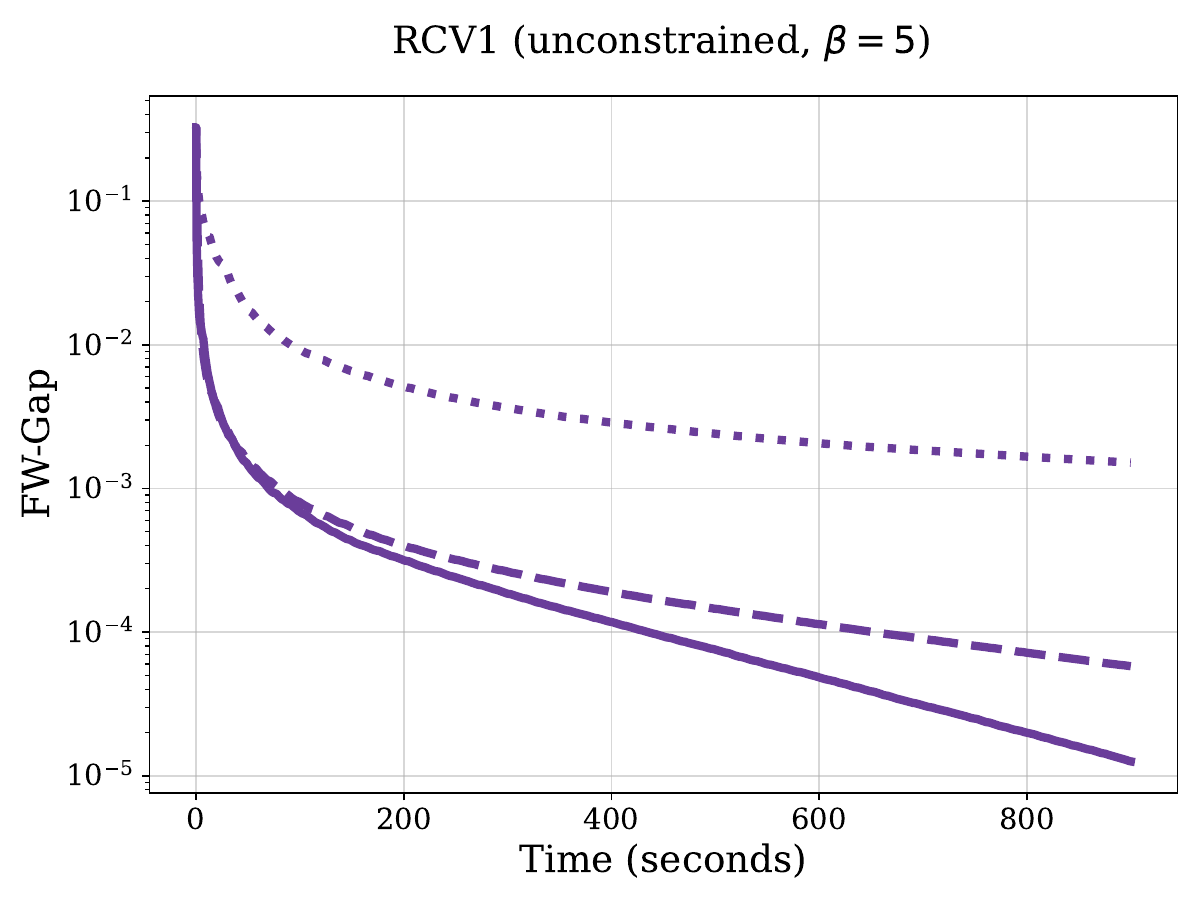}
        \label{subfig:un-rcv15}
    \end{subfigure}
    ~
    \begin{subfigure}[b]{0.28\textwidth}
        \centering
        \includegraphics[width=\linewidth]{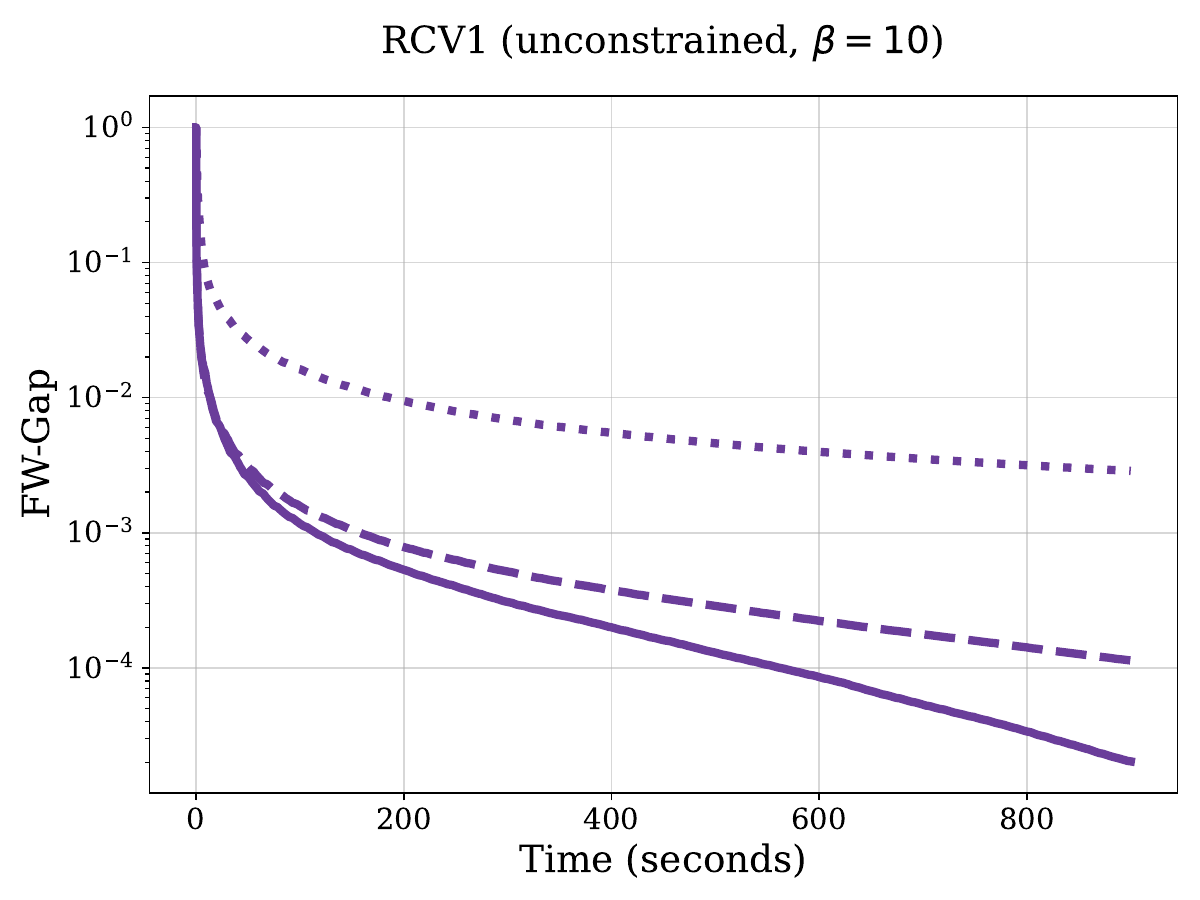}
        \label{subfig:un-rcv110}
    \end{subfigure}
    ~
    \begin{subfigure}[b]{0.28\textwidth}
        \centering
        \includegraphics[width=\linewidth]{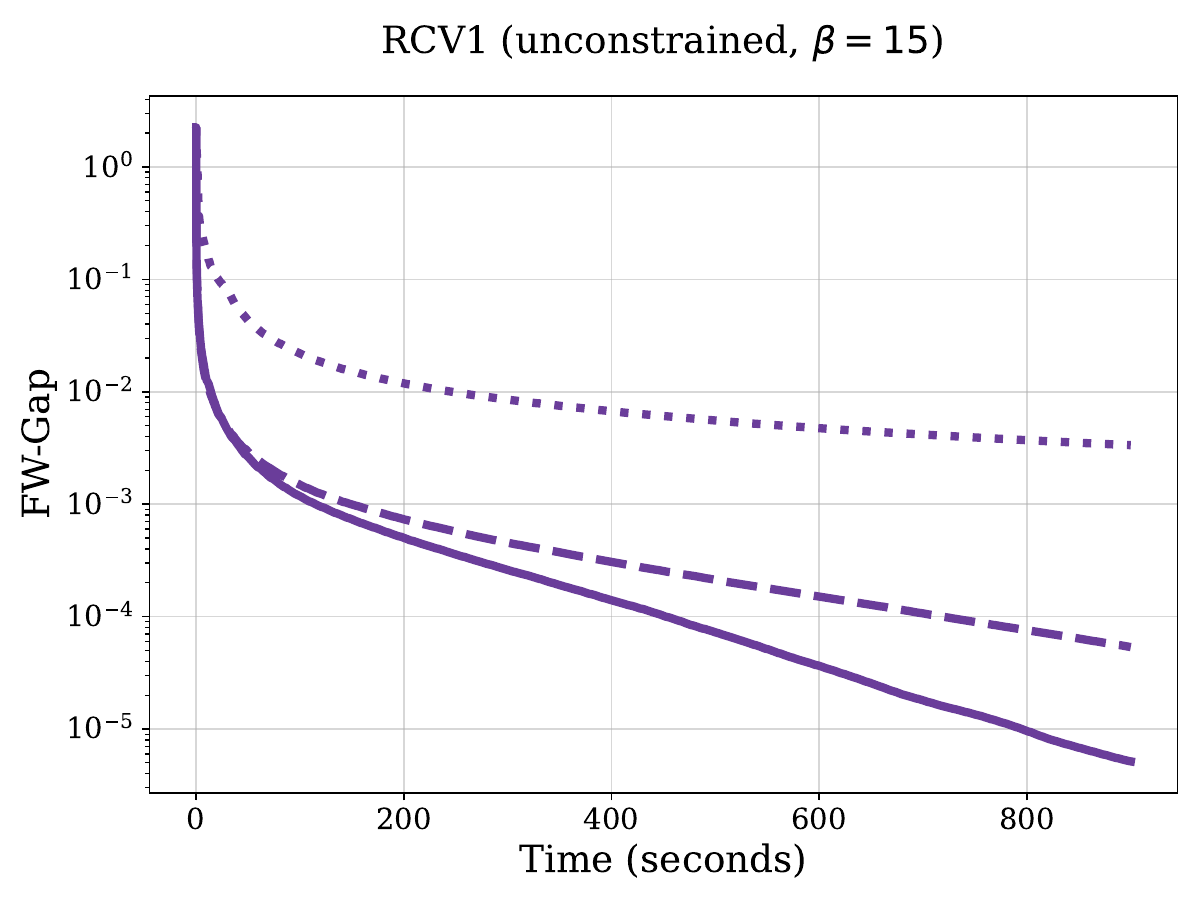}
        \label{subfig:un-rcv115}
    \end{subfigure}

    \vspace{-1em}
    \begin{subfigure}[b]{0.24\textwidth}
        \centering
        \includegraphics[width=\linewidth]{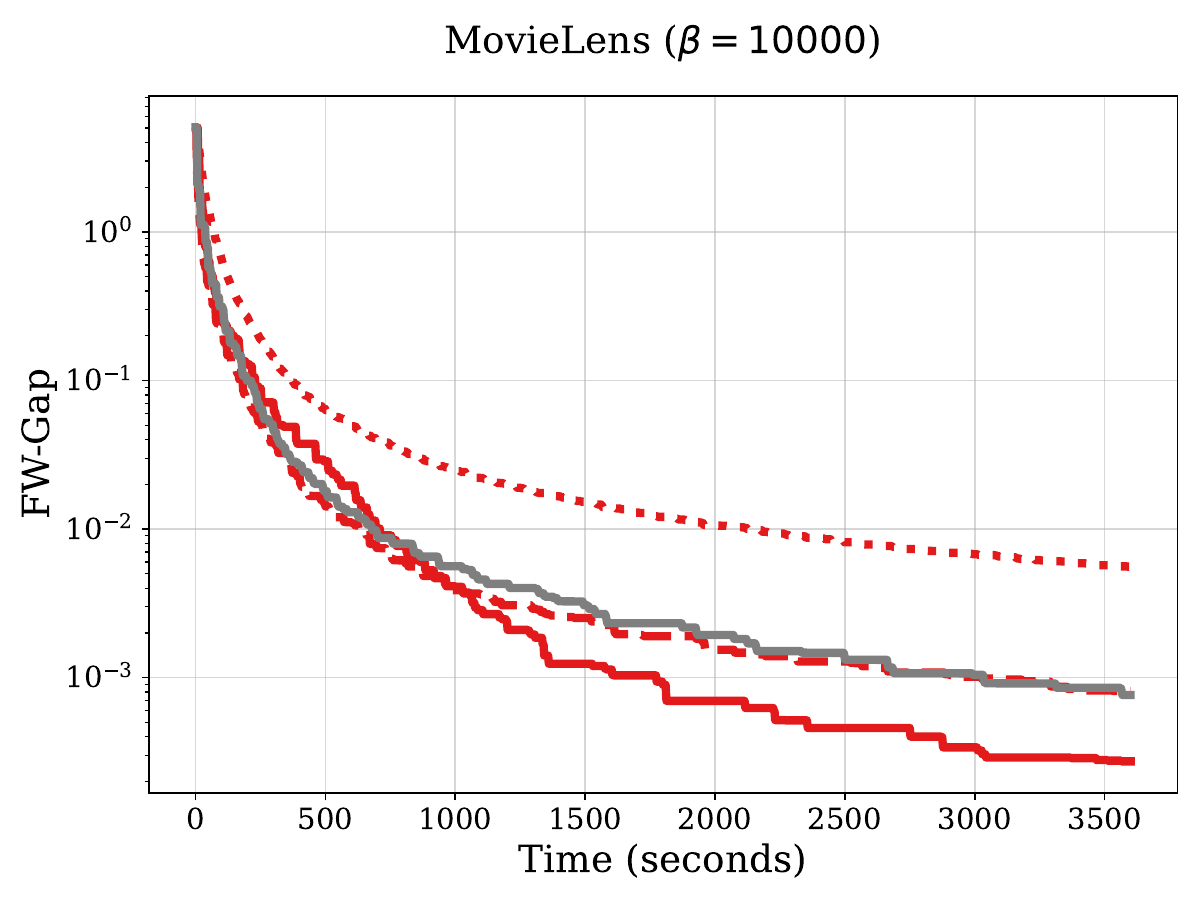}
        \label{subfig:movie10000}
    \end{subfigure}
    \hfill
    \begin{subfigure}[b]{0.24\textwidth}
        \centering
        \includegraphics[width=\linewidth]{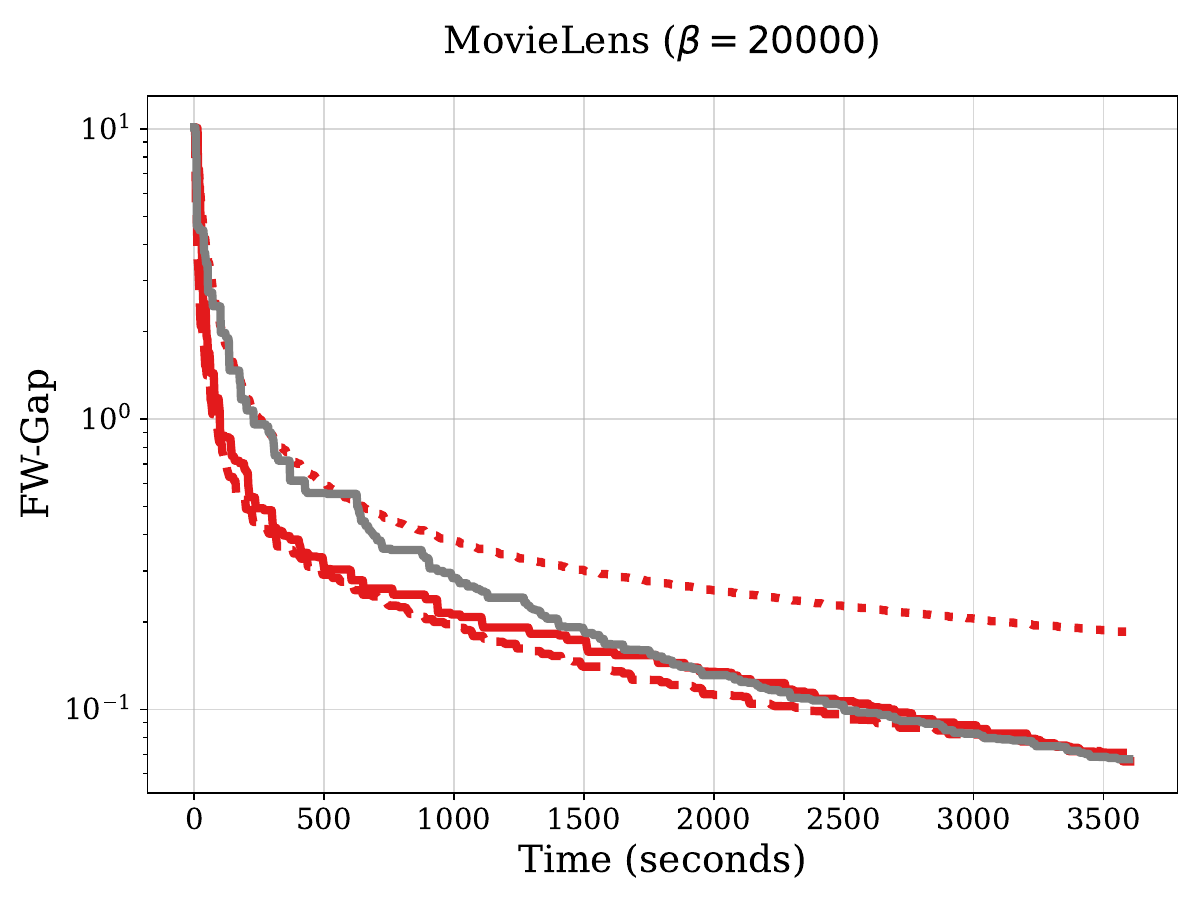}
        \label{subfig:movie20000}
    \end{subfigure}
    \hfill
    \begin{subfigure}[b]{0.24\textwidth}
        \centering
        \includegraphics[width=\linewidth]{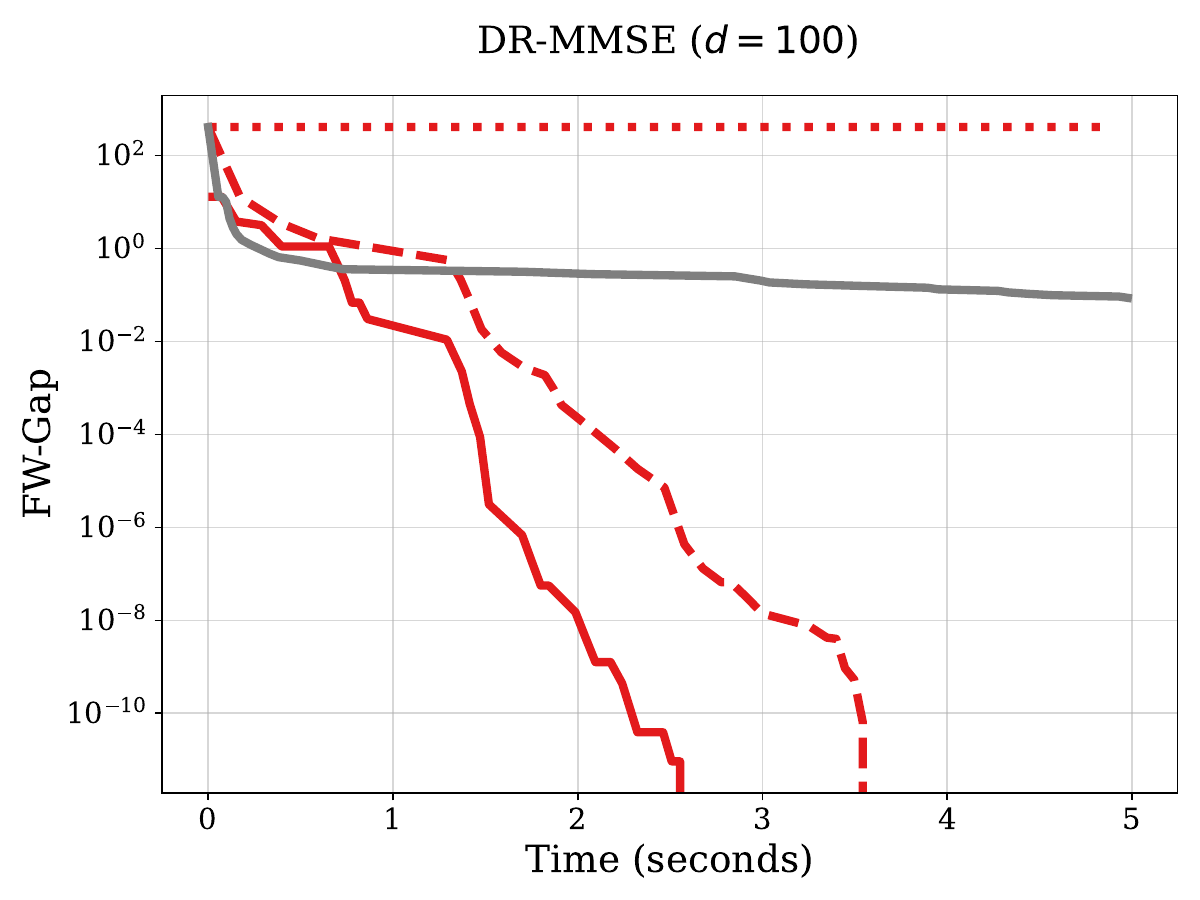}
        \label{subfig:DR100}
    \end{subfigure}
    \hfill
    \begin{subfigure}[b]{0.24\textwidth}
        \centering
        \includegraphics[width=\linewidth]{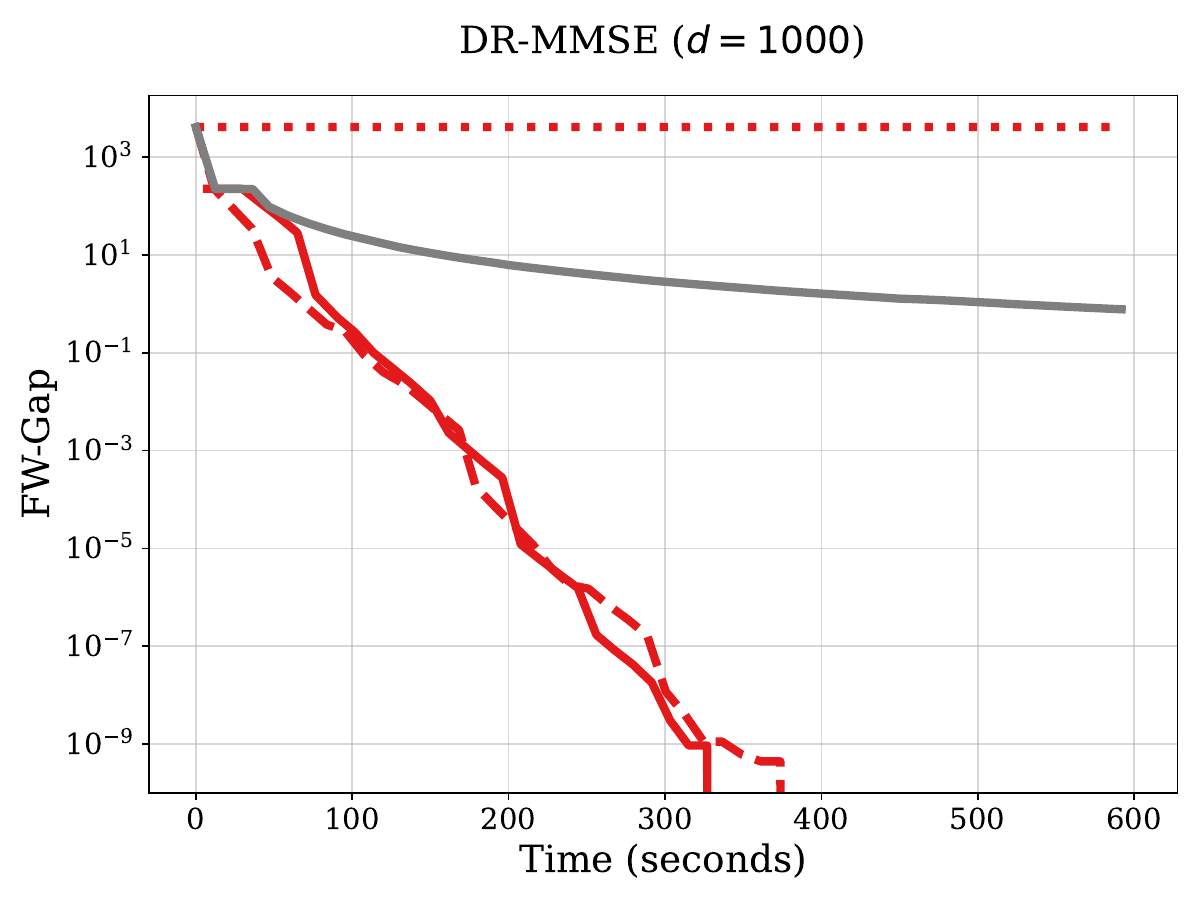}
        \label{subfig:DR1000}
    \end{subfigure}

    \vspace{-2em}
    \begin{subfigure}[b]{1\textwidth}
        \centering
        \includegraphics[width=\linewidth]{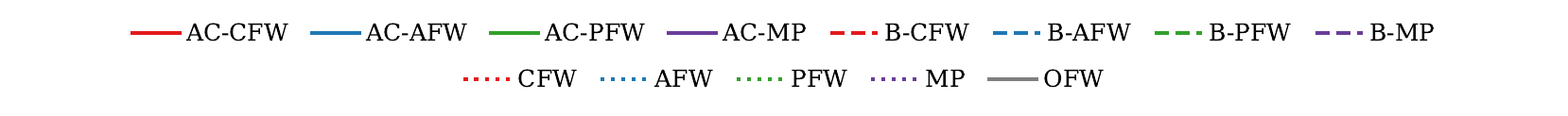}
    \end{subfigure}
    \vspace{-2.5em}
    \caption{Frank-Wolfe gap as a function of wall-clock time across all benchmark problems. Rows correspond, from top to bottom, to dictionary learning, constrained logistic regression on Madelon and RCV1, unconstrained logistic regression on Madelon and RCV1, and a final row comprising constrained Huber regression and distributionally robust minimum mean square error estimation. }
    \label{fig:Experiment1}
\end{figure*}

\section*{Acknowledgements}
This work was supported by the NSF CAREER ECCS-2541066.
\bibliographystyle{myabbrvnat}
\bibliography{bib} 

\appendix
\include{appendix}


\end{document}